\newtheorem{theorem}{Theorem}
\numberwithin{theorem}{section}
\newtheorem{proposition}[theorem]{Proposition}
\newtheorem{lemma}[theorem]{Lemma}
\newtheorem{corollary}[theorem]{Corollary}
\newtheorem{definition}[theorem]{Definition}
\newtheorem{remark}[theorem]{Remark}
\newtheorem{example}[theorem]{Example}
\begin{document}
\title{Tutte Short Exact Sequences of Graphs}
\author{Madhusudan Manjunath \footnote {Part of this work was carried out while the author was visiting MFO, Oberwolfach, IHES, Bures-sur-Yvette and ICTP, Trieste. We thank the generous support and the warm hospitality of these institutes. The author was supported by a MATRICS grant of the Department of Science and Technology (DST), India.}}
\maketitle

\begin{abstract}
We associate two modules, the $G$-parking critical module and the toppling critical module, to an undirected connected graph $G$. The $G$-parking critical module and the toppling critical module are canonical modules (with suitable twists) of quotient rings of the well-studied $G$-parking function ideal and the toppling ideal, respectively.  For each critical module, we establish a Tutte-like short exact sequence relating the modules associated to $G$, an edge contraction $G/e$ and an edge deletion $G \setminus e$ ($e$ is a non-bridge). We obtain purely combinatorial consequences of Tutte short exact sequences. For instance, we reprove a theorem of Merino that the critical polynomial of a graph is an evaluation of its Tutte polynomial, and relate the vanishing of certain combinatorial invariants (the number of acyclic orientations on connected partition graphs satisfying a unique sink property) of $G/e$ to the equality of the corresponding invariants of $G$ and $G \setminus e$. 
\end{abstract}


\section{Introduction}

Let $G$ be an undirected, connected, multigraph on $n$-vertices labelled $v_1,\dots,v_n$ and with $\ell$ loops. Let $\mathbb{K}$ be a field and $R=\mathbb{K}[x_1,\dots,x_n]$ be the polynomial ring in $n$ variables with coefficients in $\mathbb{K}$. The toppling ideal $I_G$ of $R$ is a binomial ideal that encodes chip firing equivalence on $G$ \cite{CorRosSal00} and the $G$-parking function ideal $M_G$ is a monomial initial ideal of $I_G$ that closely mirrors the properties of $I_G$ \cite{PosSha04}. The ideals $I_G$ and $M_G$ (and their quotient rings) have received significant attention recently, in part due to their connections with tropical geometry. Combinatorial commutative algebraic aspects of $R/I_G$ and $R/M_G$ such as their minimal free resolutions in terms of the underlying graph $G$ have been studied from several perspectives \cite{PerPerWil11, ManStu13, ManSchWil15, DocSan14, MohSho16}.

Both $R/I_G$ and $R/M_G$ are Cohen-Macaulay (of depth and Krull dimension one) and hence, have associated canonical modules (also known as dualising modules)  \cite[Part b, Proposition 3.6.12]{BruHer98}. We refer to the canonical modules of $R/I_G$ and $R/M_G$ (both twisted by the number of loops of $G$) as the \emph{toppling critical module} and the \emph{$G$-parking critical module}, respectively. We denote the toppling critical module and the $G$-parking critical module by ${\rm CTopp}_G$ and ${\rm CPark}_G$, respectively. 
  In this article, we posit that the critical modules behave better compared to the corresponding quotient rings in certain contexts. Specifically, we construct short exact sequences relating the critical modules of $G$, its contraction $G/e$ and deletion $G \setminus e$ by an edge $e$ that is not a bridge {\footnote{This condition ensures that $G \setminus e$ remains connected and is needed to guarantee that resulting coordinate ring retains some basic properties. For instance, if $G$ is not connected, then the quotient ring of the toppling ideal defined analogously does not have Krull dimension one.}}. 
Taking cue from the deletion-contraction sequence that characterises the Tutte polynomial of a graph, we refer to these sequences as \emph{Tutte short exact sequences}.  

We present purely combinatorial consequences of Tutte short exact sequences. For instance,  as a corollary we obtain an algebraic proof of a theorem of Merino \cite{Mer97} that the critical polynomial of a graph is an evaluation of its Tutte polynomial. 
This follows from the additivity of the Hilbert series of the modules involved in one of the Tutte short exact sequences, namely the $G$-parking Tutte short exact sequence. By considering associated long exact sequences of ${\rm Tor}$,  we relate the vanishing of certain combinatorial invariants  of $G/e$ to the equality of corresponding invariants of $G$ and $G \setminus e$. These combinatorial invariants are the number of acyclic orientations satisfying a unique sink property on certain graphs derived from $G$ called \emph{connected partition graphs} \cite[Pages 2854--2855]{ManSchWil15}. We also note a deletion-contraction formula for certain numbers associated to $G$ called \emph{alternating numbers} that are alternating sums of these combinatorial invariants. 

 The construction of the Tutte short exact sequences and the corresponding proofs involve a delicate interplay between the algebraic structure of the critical modules and the combinatorial structure of the graph, mainly its acyclic orientations.  In the following, we state our main theorems concerning Tutte short exact sequences.  Before  this, we clarify one crucial point about contraction and deletion of the edge $e$.

 {\bf The Notions $G/e$, $G \setminus e$ and $G/(v_i,v_j)$:} Suppose that there are  $m_e \geq 1$ edges between $v_1$ and $v_2$. By $G/e$, we mean the graph obtained from $G$ by contracting the vertices $v_1$ and $v_2$  to the vertex $v_{1,2}$ and with $m_e-1$ loops on the vertex $v_{1,2}$.  By $G \setminus e$, we mean the graph obtained from $G$ by deleting the edge $e$ and retaining all the other $m_e-1$ edges parallel to $e$. On the other hand, by $G/(v_i,v_j)$ for a pair of distinct, adjacent vertices $(v_i,v_j)$ we mean the graph obtained by contracting every edge between $v_i$ and $v_j$.

\subsection{Tutte Short Exact Sequences}

Let $e$ be an edge between the vertices $v_1$ and $v_2$ that is not a bridge.  Let $R_e$ be the polynomial ring $\mathbb{K}[x_{1,2},x_3,\dots,x_n]$ in $(n-1)$-variables with coefficients in $\mathbb{K}$ so that its variables are naturally in correspondence with the vertices of $G/e$.

{\bf$G$-Parking Tutte Short Exact Sequence}: We construct a short exact sequence relating the $G$-parking critical modules of $G$, its contraction $G/e$ and deletion $G \setminus e$ with respect to the edge $e$. By definition, the $G$-parking critical modules of $G$ and $G \setminus e$ are $R$-modules, whereas the $G$-parking critical module of $G/e$ is an $R_e$-module.  We start by realising ${\rm CPark}_G$ and ${\rm CPark}_{G \setminus e}$ as $R_e$-modules. For this, we consider the linear form $L:=x_1-x_2$ and note that $R_e \cong R/\langle L \rangle$ via a map between $R$ and $R_e$ that takes $x_1$ and $x_2$ to $x_{1,2}$ and $x_i$ to itself for all $i \neq 1,2$. This isomorphism realises $R_e$ as an $R$-module.  We consider the tensor products ${\rm CPark}_G \otimes_R R_e$ and ${\rm CPark}_{G \setminus e} \otimes_R R_e$  as $R_e$-modules.  We define $R_e$-module maps $\psi_0: {\rm CPark}_{G/e} \rightarrow {\rm CPark}_G \otimes_R R_e$ and $\phi_0: {\rm CPark}_G \otimes_R R_e \rightarrow {\rm CPark}_{G \setminus e} \otimes_R R_e$.  We denote the map ${\rm CPark}_{G/e}/{\rm ker}(\psi_0) \rightarrow {\rm CPark}_G \otimes_R R_e$ induced by $\psi_0$ also by $\psi_0$. We show that $\psi_0$ and $\phi_0$ fit into a short exact sequence. More precisely,  

\begin{theorem}{{\rm{\bf{($G$-parking Tutte Short Exact Sequence)}}}}\label{GparkTutte_theo}
Let $G$ be an undirected connected multigraph (possibly with loops) with at least three vertices. Let $e$ be an edge between the vertices $v_1$ and $v_2$ that is not a bridge.  
The kernel of the map $\psi_0$ is equal to $x_{1,2} \cdot {\rm CPark}_{G/e}$ and the following sequence of $R_e$-modules: 

\begin{center}
$0 \rightarrow {\rm CPark}_{G/e}/(x_{1,2} \cdot {\rm CPark}_{G/e}) \xrightarrow{\psi_0} {\rm CPark}_G \otimes_R R_e \xrightarrow{\phi_0} {\rm CPark}_{G \setminus e} \otimes_R R_e \rightarrow 0$
 \end{center}
 is a short exact sequence of graded $R_e$-modules.
\end{theorem}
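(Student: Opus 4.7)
The plan is to work with the explicit free presentations of ${\rm GC}_G$, ${\rm GC}_{G/e}$, and ${\rm GC}_{G\setminus e}$ described in Subsection~\ref{freepres_subsect}, whose minimal generators are indexed by acyclic orientations of $G$, $G/e$, and $G\setminus e$ with the prescribed unique sink. The combinatorial backbone is the bijective form of the Greene--Zaslavsky deletion-contraction identity $T(G;1,0) = T(G/e;1,0) + T(G\setminus e;1,0)$: every acyclic orientation of $G$ with unique sink at $v_2$ either restricts to an acyclic orientation of $G\setminus e$ with unique sink at $v_2$, or contracts to an acyclic orientation of $G/e$ with unique sink at $v_{1,2}$, and these two cases partition the generator set. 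A basic observation driving the whole argument is that in any acyclic orientation $\theta$ of $G$ with $v_2$ as a sink, the edge $e$ must be oriented $v_1 \to v_2$.

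I would next fix explicit versions of $\psi_0$ and $\phi_0$ on the generators of the free presentations. For $\psi_0$, the generator indexed by an acyclic orientation $\bar\theta$ of $G/e$ is sent to the generator indexed by the canonical lift $\theta$ obtained by splitting $v_{1,2}$ back into $(v_1,v_2)$ and orienting $e$ as $v_1\to v_2$; the sink of $\bar\theta$ being $v_{1,2}$ forces all non-$e$ edges at $v_1$ and $v_2$ to be incoming, so $\theta$ is acyclic with $v_2$ as its unique sink. For $\phi_0$, the generator indexed by $\theta$ is sent to the generator indexed by $\theta|_{G\setminus e}$ when this restriction still has unique sink at $v_2$, and to $0$ otherwise (which happens precisely when $v_1$ becomes a second sink in $\theta|_{G\setminus e}$). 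With these definitions, three of the four necessary claims become essentially immediate: surjectivity of $\phi_0$ follows from lifting any orientation of $G\setminus e$ with sink $v_2$ by orienting $e$ as $v_1\to v_2$; $\phi_0\circ\psi_0 = 0$ since the lift in $\psi_0$ always leaves $v_1$ as a sink after removing $e$; and the containment $x_{1,2}\cdot {\rm GC}_{G/e}\subseteq\ker\psi_0$ reduces, via $R_e$-linearity and the identification $x_1=x_2=x_{1,2}$ in $R_e$, to showing that $x_1\cdot [\theta]=0$ in ${\rm GC}_G\otimes_R R_e$ for every lift $\theta$, which should follow from the syzygy in the free presentation encoding the fact that $v_1$ is a sink in $\theta|_{G\setminus e}$.

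The remaining two claims, the reverse containment $\ker\psi_0\subseteq x_{1,2}\cdot {\rm GC}_{G/e}$ and the middle exactness $\ker\phi_0\subseteq\mathrm{image}(\psi_0)$, constitute the substance of the theorem. My strategy is to leverage the graded structure and establish a Hilbert series identity: the multivariate generating function of acyclic orientations with unique sink, refined by indegree vectors, satisfies a deletion-contraction identity refining the Tutte polynomial identity, and this provides dimension counts in each graded degree that, together with the forward containments already established, force the reverse containments. An alternative route is to start from an $R$-module short exact sequence involving multiplication by $L=x_1-x_2$ on $R/M_G$, and to pass to canonical modules via $\mathrm{Ext}$-duality; this would give a long exact sequence in which the Tutte short exact sequence appears as a piece.

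The main obstacle is the kernel computation $\ker\psi_0 = x_{1,2}\cdot {\rm GC}_{G/e}$: it encodes the algebraic incarnation of contraction, saying that multiplication by $x_{1,2}$ on ${\rm GC}_{G/e}$ is exactly the shadow of the edge $e$ collapsing inside $G$, and this has to be reconciled with the identification $x_1 = x_2$ imposed by tensoring with $R_e$. The explicit syzygies from Subsection~\ref{freepres_subsect} should govern both sides of this equality; executing this identification, and in particular ruling out spurious elements of $\ker\psi_0$ not of the form $x_{1,2}\cdot \bar m$, is where the delicate interplay between the algebra of the critical modules and the combinatorics of acyclic orientations, emphasised in the introduction, is indispensable.
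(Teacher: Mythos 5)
Your setup of the maps on standard generators, the easy containment $x_{1,2}\cdot{\rm GC}_{G/e}\subseteq\ker\psi_0$, the complex property, and the surjectivity of $\phi_0$ all match the paper. But the central step fails as a matter of logic. Given the forward containments, the surjectivity of $\phi_0$, and the graded Hilbert-series identity ${\rm Hil}({\rm GC}_G\otimes_R R_e)={\rm Hil}({\rm GC}_{G/e}/x_{1,2}{\rm GC}_{G/e})+{\rm Hil}({\rm GC}_{G\setminus e}\otimes_R R_e)$, a dimension count in each degree only yields $\dim(\mathrm{im}\,\psi_0)_d\le\dim(\ker\phi_0)_d=\dim({\rm GC}_{G/e}/x_{1,2}{\rm GC}_{G/e})_d$; it shows that the two hard statements ($\ker\psi_0\subseteq x_{1,2}{\rm GC}_{G/e}$ and $\ker\phi_0\subseteq\mathrm{im}\,\psi_0$) are \emph{equivalent} to one another, but it cannot force either of them: nothing rules out $\psi_0$ killing strictly more than $x_{1,2}{\rm GC}_{G/e}$ while $\mathrm{im}\,\psi_0$ sits strictly inside $\ker\phi_0$, consistently with all your inequalities. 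So at least one of the two containments must be proved directly, and this is precisely where the paper works: it expands an element of $\ker\psi_0$ (resp.\ $\ker\phi_0$) in the standard generators, observes that its image is a syzygy, writes that syzygy in terms of the standard syzygies of the target, and then clears all standard syzygies that are images of standard syzygies of the source, leaving only those attached to the partition $\mathcal{P}_{1,2}$ (or, in the $m_e=1$ case, the type-three syzygies at pairs $(v_1,v_j)$ that degenerate to type one after deleting $e$), from which divisibility by $x_{1,2}$, respectively membership in $\mathrm{im}\,\psi_0$, is read off.

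Two further gaps compound this. First, the Hilbert-series identity you invoke is itself unproven in your proposal; together with the (also unproven, though easy) facts that $x_1-x_2$ is a nonzerodivisor on ${\rm GC}_G,{\rm GC}_{G\setminus e}$ and $x_{1,2}$ on ${\rm GC}_{G/e}$, it is equivalent to Merino's theorem plus Tutte deletion-contraction — exactly the statement the paper derives \emph{from} the short exact sequence, so using it as input is circular relative to the paper's purpose (citing Merino externally would repair this particular point but not the counting defect above). Second, you never verify that $\psi_0$ and $\phi_0$ are well defined, i.e.\ that they carry the standard syzygies of the source presentations to syzygies of the targets; this is not a formality — the paper spends two propositions on it, with a genuinely delicate case (a triangle on $v_1,v_2,v_j$, where a type-three syzygy must be rewritten as a combination of two standard syzygies) — and without it the kernels and images you are counting are not even defined. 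Relatedly, your $\psi_0$ omits the $x_{1,2}^{m_e-1}$ factor needed when $e$ has multiplicity $m_e>1$, which is required both for the grading and for the syzygy-preservation checks.
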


{\bf Toppling Tutte Short Exact Sequence}: The toppling critical module ${\rm CTopp}_{G/e}$ is by definition an $R_e$-module (rather than an $R$-module). In contrast, ${\rm CTopp}_G$ and ${\rm CTopp}_{G \setminus e}$ are by definition $R$-modules. We start by realising ${\rm CTopp}_G$ and ${\rm CTopp}_{G \setminus e}$ as $R_e$-modules. For this, we realise $R_e$ as an $R$-module via the same isomorphism $R_e \cong R/\langle L \rangle$ as in the $G$-parking case and regard ${\rm CTopp}_G \otimes_R R_e$ and ${\rm CTopp}_{G \setminus e} \otimes_R R_e$ as $R_e$-modules. Hence, ${\rm CTopp}_G \otimes_R R_e$ and ${\rm CTopp}_{G \setminus e} \otimes_R R_e$  are $R_e$-modules. We define $R_e$-module maps $\psi_1: {\rm CTopp}_{G/e} \rightarrow {\rm CTopp}_G \otimes_R R_e$ and $\phi_1: {\rm CTopp}_G \otimes_R R_e \rightarrow {\rm CTopp}_{G \setminus e} \otimes_R R_e$. We also denote by $\psi_1$,  the injective map ${\rm CTopp}_{G/e}/{\rm ker}(\psi_1) \rightarrow {\rm CTopp}_{G} \otimes_R R_e$ induced by $\psi_1$.

\begin{theorem} {\rm ({\bf Toppling Tutte Short Exact Sequence})}\label{Tutteses_theo}
Let $\mathbb{K}$ be a field of characteristic two. Let $G$ be an undirected connected multigraph (possibly with loops) with at least three vertices. Let $e$ be an edge in $G$ between $v_1$ and $v_2$ that is not a bridge.  The following sequence of $R_e$-modules:

\begin{equation}
 0 \rightarrow {\rm CTopp}_{G/e}/{\rm ker}(\psi_1) \xrightarrow{\psi_1} {\rm CTopp}_G \otimes_R R_e \xrightarrow{\phi_1} {\rm CTopp}_{G \setminus e} \otimes_R R_e \rightarrow 0
\end{equation}
is a short exact sequence of graded $R_e$-modules.  

\end{theorem}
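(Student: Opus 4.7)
The plan is to follow the general template of the proof of Theorem~\ref{GparkTutte_theo}, now relying on the Bayer--Sturmfels lattice-module description of $C_G$ established in Section~\ref{BayStu_sect} in place of the free presentation of ${\rm GC}_G$. The tensor products $C_G\otimes_R R_e$ and $C_{G\setminus e}\otimes_R R_e$ are quotients by the linear form $L=x_1-x_2$, so algebraically identifying $x_1$ with $x_2$ encodes precisely the vertex-contraction that produces $G/e$. This dictionary between \emph{setting $x_1=x_2$} on the algebra side and \emph{contracting $v_1\sim v_2$} on the graph side will drive every step.

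I would first prove surjectivity of $\phi_1$. Since $G$ and $G\setminus e$ share the same vertex set, every minimal generator of $C_{G\setminus e}$ (indexed combinatorially, for instance by an acyclic orientation with a unique sink at $v_2$) lifts to a generator of $C_G$; the reduction modulo $L$ respects this lifting and yields the surjection. I would then check $\phi_1\circ\psi_1=0$: the image of a $C_{G/e}$-generator under $\psi_1$ lies in the part of $C_G\otimes_R R_e$ generated by elements encoding an orientation that contracts across $e$, and such elements are annihilated by $\phi_1$ by construction. Injectivity of the induced map $C_{G/e}/\ker(\psi_1)\hookrightarrow C_G\otimes_R R_e$ is then automatic.

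The bulk of the argument lies in the reverse inclusion $\ker(\phi_1)\subseteq\mathrm{image}(\psi_1)$, and this is also where the main obstacle sits. My plan has two components. Combinatorially, I would argue via the Bayer--Sturmfels presentation that any element of $C_G\otimes_R R_e$ killed by $\phi_1$ corresponds to a Laplacian-lattice representative of $G$ whose $e$-coordinate content is nontrivial modulo $L$, and such a representative lifts to a Laplacian-lattice element of $G/e$, producing a preimage under $\psi_1$. Numerically, I would back this up with a Hilbert-series comparison driven by the classical deletion--contraction identity for the number of acyclic orientations of $G$ with a fixed unique sink at $v_2$, which by \cite{PerPerWil11,ManSchWil15} controls the minimal generators of the critical modules; the identity forces the alternating sum of Hilbert series of the three modules in the sequence to vanish, pinning down equality in the middle once the two inclusions are established.

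The hard part, and the reason the hypothesis $\mathrm{char}(\mathbb{K})=2$ enters, is the sign bookkeeping. The toppling ideal is generated by signed binomials whose signs depend on a chosen orientation of the edges of $G$; when we set $x_1=x_2$, the binomials corresponding to edges between $v_1$ and the rest of the graph and those between $v_2$ and the rest get identified up to sign, and the cleanest way for $\mathrm{image}(\psi_1)$ to coincide with $\ker(\phi_1)$ is for these signs to simply disappear. In characteristic two this happens for free, reducing the toppling-module picture to a purely lattice-theoretic one where the three-term deletion--contraction is manifest; the final step of the plan is to formalise this reduction and use it to close the short exact sequence.
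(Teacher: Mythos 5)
Your outline breaks down precisely at the step that carries all the difficulty, namely $\ker(\phi_1)\subseteq\mathrm{im}(\psi_1)$. The phrase ``a Laplacian-lattice representative of $G$ whose $e$-coordinate content is nontrivial modulo $L$ lifts to a Laplacian-lattice element of $G/e$'' is not an argument, and it misses the actual shape of the kernel: $\ker(\phi_1)$ is not generated by elements that individually ``contract across $e$''. Example~\ref{fourcycletutte_ex} makes this concrete: for the four cycle, $[\mathcal{B}_2]+[\mathcal{B}_3]$ lies in $\ker(\phi_1)$ because two distinct standard generators of $C_G\otimes_R R_e$ map to the same standard generator of $C_{G\setminus e}\otimes_R R_e$ (and we are in characteristic two), yet this element is not $\psi_1$ of any single standard generator of $C_{G/e}$; it is only the image of $[\mathcal{A}_1]+[\mathcal{A}_2]$. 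The content of the paper's Proposition~\ref{toppzeroh1_prop} is exactly to handle such cross-fiber sums: after clearing the standard syzygies of $C_{G\setminus e}\otimes_R R_e$ that lift to standard syzygies of $C_G\otimes_R R_e$, what remains of a kernel element is a combination of sums of pairs of generators lying in a common fiber of $\phi_1$, and each such pair is shown to lie in $\mathrm{im}(\psi_1)$ by induction on the source-sink reversal distance (Theorem~\ref{acycequiv_theo}, Mosesian/Backman) between the two underlying acyclic orientations of $G\setminus e$, with a case split according to whether the reversed source or sink is $v_1$, $v_2$, or neither, plus a separate analysis of the ``relevant'' syzygies when $m_e>1$. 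Nothing in your plan substitutes for this induction. Relatedly, your claim that $\phi_1\circ\psi_1=0$ ``by construction'' is wrong as stated: $\phi_1$ sends every standard generator to a nonzero standard generator of $C_{G\setminus e}\otimes_R R_e$; the composite vanishes because for $m_e=1$ the two images coincide and characteristic two kills their sum, and for $m_e>1$ the image is $x_{1,2}$ times a standard syzygy of $C_{G\setminus e}\otimes_R R_e$.

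The numerical pincer you propose also cannot close the gap. Unlike the $G$-parking case, $x_1-x_2$ is always a zero divisor on $C_G$ (as the paper remarks in the introduction), so $\mathrm{Hil}(C_G\otimes_R R_e)$ is not the $K$-polynomial of $C_G$; moreover $\ker(\psi_1)$ strictly contains $x_{1,2}\cdot C_{G/e}$ and admits no simple description, so $\mathrm{Hil}\bigl(C_{G/e}/\ker(\psi_1)\bigr)$ cannot be computed independently of the theorem. The deletion--contraction identity for acyclic orientations with a unique sink controls only the numbers of minimal generators ($\beta_0$), not the full Hilbert functions of $\mathrm{im}(\psi_1)$ or $\ker(\phi_1)$; hence the ``vanishing alternating sum of Hilbert series'' you invoke is a consequence of exactness, not an input that can be used to force it. This is why the paper argues exactness in homological degree one directly, via the syzygy comparison and the source-sink reversal induction, rather than by any Hilbert series count.
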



\begin{remark}
\rm
 The  $G$-parking Tutte short exact sequence is not split exact in general. To see this, suppose that $e$ has parallel edges then both ${\rm CPark}_G \otimes_R R_e$ and ${\rm CPark}_{G \setminus e} \otimes_R R_e$ have the same number of minimal generators. If the corresponding Tutte short exact sequence was split exact, then we would have $\beta_0({\rm CPark}_{G}\otimes_R R_e)=\beta_0({\rm CPark}_{G \setminus e}\otimes_R R_e)+\beta_0({\rm CPark}_{G/e})$ which is not true.
 We do not know whether the toppling short exact sequence is split exact.  
\qed
\end{remark}

\begin{remark}
\rm 
Note that unlike the case of the $G$-parking critical module, $(x_1-x_2)$ is never a non-zero divisor of ${\rm CTopp}_G$ for any connected graph $G$. This can be seen by showing the equivalent property that $(x_1-x_2)$ is a zero divisor of $R/I_G$ which in turn follows from the fact that $G$ has a principal divisor, see \cite[Page 768]{BakNor07} for the definition, of the form $d \cdot (v_1)-d \cdot (v_2)$ for some positive integer $d$.  The kernel of $\psi_1$ is also in general more complicated in this case (see the last line of Example \ref{triangle_ex}):  in general, it only strictly contains $x_{1,2} \cdot {\rm CTopp}_{G/e}$.  \qed

\end{remark}

\begin{remark}
\rm
We expect that Theorem \ref{Tutteses_theo} does not require characteristic two and believe that it can be generalised to arbitrary ground fields. We rely on characteristic two in, for instance, the proof of Proposition \ref{toppzeroh1_prop}.
\qed
\end{remark}


\subsection{Motivation and Applications}  Two sources of motivation for the Tutte short exact sequence are i. Merino's theorem \cite{Mer97} and its connection to Stanley's O-sequence conjecture \cite{Mer01}, ii. divisor theory on graphs \cite{BakNor07}. 

Merino's theorem  states that the generating function of the critical configurations of $G$ is an evaluation of the Tutte polynomial at $(1,t)$.  The first observation that relates the critical modules to Merino's theorem is that their Hilbert series are both equal to $P_G(t)/(1-t)$, where $P_G(t)$ is the generating function of the critical configurations of $G$ (this is implicit in \cite{ManStu13}, also see Remark \ref{hileq_rem}). This leads to the question of whether Merino's theorem can be enriched into a short exact sequence of critical modules. Merino's theorem can then be recovered from this short exact sequence from the fact that the Hilbert series is additive in short exact sequences. Such a short exact sequence might then allow the possibility of obtaining further combinatorial results by, for instance, considering the associated long exact sequence of Tor, Ext and other derived functors. The $G$-parking Tutte short exact sequence is such an enrichment and can be viewed as a categorification of Merino's theorem.  By studying the associated long exact sequence in Tor, we relate certain combinatorial invariants of $G/e$ to those of $G$ and $G \setminus e$. We refer to \cite{Hel-Ron05} and \cite{Jas-Ron06} for a categorification of the chromatic polynomial  and the Tutte polynomial of a graph, respectively.  These works seem to be a different flavour from the current work. 

Merino's theorem is a key ingredient in the proof of Stanley's $O$-sequence conjecture for co-graphic matroids \cite{Mer01}. Stanley's conjecture is still open for arbitrary matroids. We raise the question of exploring generalisations of the main results of this paper to matroids as a possible approach to Stanley's conjecture.

{\bf Merino's Theorem via Tutte Short Exact Sequences:} As an application of the $G$-parking Tutte short exact sequence, we deduce the following version of Merino's theorem as a corollary to Theorem \ref{GparkTutte_theo}. 
Recall that the $K$-polynomial of a finitely generated graded module over the (graded) polynomial ring is the numerator of the Hilbert series expressed as a rational function in reduced form. 

\begin{theorem} {\rm({\bf Merino's Theorem})}
The $K$-polynomial of ${\rm CPark}_G$ is the Tutte evaluation $T_G(1,t)$, where $T_G(x,y)$ is the Tutte polynomial of $G$. 
\end{theorem}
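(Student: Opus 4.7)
The plan is to induct on the number of edges of $G$, using the $G$-parking Tutte short exact sequence of Theorem \ref{GparkTutte_theo} to transport the deletion-contraction recursion for $T_G(1,t)$ to the level of $K$-polynomials. Recall from the introduction that $\mathrm{HS}({\rm GC}_G)=P_G(t)/(1-t)$, where $P_G(t)$ is the generating function of the critical configurations of $G$; with the convention that the $K$-polynomial of a Cohen--Macaulay module of Krull dimension one is the numerator of its Hilbert series written over $(1-t)$, the $K$-polynomial of ${\rm GC}_G$ is precisely $P_G(t)$. It therefore suffices to show $P_G(t)=T_G(1,t)$.

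For the inductive step, pick an edge $e$ between distinct vertices $v_1$ and $v_2$ that is not a bridge. Each of ${\rm GC}_G$, ${\rm GC}_{G\setminus e}$, and ${\rm GC}_{G/e}$ is Cohen--Macaulay of Krull dimension one, so for any such module $M$ with a linear non-zero-divisor $L$, the sequence $0\to M(-1)\xrightarrow{L} M\to M/LM\to 0$ gives $\mathrm{HS}(M/LM)=(1-t)\mathrm{HS}(M)$. Granting that $x_1-x_2$ is a non-zero-divisor on ${\rm GC}_G$ and ${\rm GC}_{G\setminus e}$, and that $x_{1,2}$ is a non-zero-divisor on ${\rm GC}_{G/e}$, one obtains
\[
\mathrm{HS}({\rm GC}_G\otimes_R R_e)=P_G(t),\ \ \mathrm{HS}({\rm GC}_{G\setminus e}\otimes_R R_e)=P_{G\setminus e}(t),\ \ \mathrm{HS}({\rm GC}_{G/e}/x_{1,2}{\rm GC}_{G/e})=P_{G/e}(t).
\]
All three modules are now zero-dimensional, so Hilbert series and $K$-polynomial coincide. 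Additivity of Hilbert series applied to the short exact sequence of Theorem \ref{GparkTutte_theo} then yields
\[
P_G(t)=P_{G/e}(t)+P_{G\setminus e}(t),
\]
which is exactly the deletion-contraction relation satisfied by $T_G(1,t)$ at a non-bridge, non-loop edge.

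To close the induction I would handle loops and bridges separately. A loop $e$ contributes a factor of $t$: the definition of ${\rm GC}_G$ twists the canonical module of $R/M_G$ by the total number of loops $\ell$, so removing a loop shifts the grading by $-1$, giving $P_G(t)=t\cdot P_{G\setminus e}(t)$ and matching $T_G(1,t)=t\cdot T_{G\setminus e}(1,t)$. Once all loops have been peeled off, if every remaining edge is a bridge then $G$ is a tree; with $v_2$ as sink one checks directly that $M_G=\langle x_i:i\neq 2\rangle$, so ${\rm GC}_G\cong\mathbb{K}[x_2]$ and $P_G(t)=1=T_G(1,t)$. The remaining small-vertex base cases (one or two vertices, where Theorem \ref{GparkTutte_theo} does not apply) are verified by direct computation on both sides.

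The step I expect to be the main obstacle is the verification of the non-zero-divisor hypotheses used above. Each critical module is the canonical module of a one-dimensional Cohen--Macaulay quotient of a polynomial ring, and a homogeneous linear form is a non-zero-divisor on such a canonical module if and only if it avoids every minimal prime of that quotient. The combinatorial input required is that the non-bridge condition on $e$ together with the choice of $v_2$ as sink forces $x_1-x_2$ (respectively $x_{1,2}$) to lie in no minimal prime of $R/M_G$, $R/M_{G\setminus e}$, or $R_e/M_{G/e}$; this should follow from the combinatorial description of the associated primes of these ideals given in \cite{PerPerWil11,ManSchWil15}. Once this is settled, the argument reduces to Hilbert-series bookkeeping along the inductive reduction.
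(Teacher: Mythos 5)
Your proposal is correct and follows essentially the same route as the paper: apply additivity of Hilbert series to the short exact sequence of Theorem \ref{GparkTutte_theo}, use that $x_1-x_2$ is a non-zero-divisor on ${\rm GC}_G$ and ${\rm GC}_{G\setminus e}$ and $x_{1,2}$ on ${\rm GC}_{G/e}$ to identify the Hilbert series of the quotients with the $K$-polynomials, and close the deletion-contraction recursion with base cases (the paper uses trees with loops and two-vertex multigraphs, the latter needed since the theorem requires at least three vertices). The non-zero-divisor step you flag as the main obstacle is handled in the paper by a short direct argument rather than a description of associated primes: since $v_2$ (resp.\ $v_{1,2}$) is the sink, the monomial generators of $M_G$, $M_{G\setminus e}$ and $M_{G/e}$ avoid $x_2$ (resp.\ $x_{1,2}$), and because associated primes of monomial ideals are monomial, $x_1-x_2$ cannot be a zero divisor on $R/M_G$ or $R/M_{G\setminus e}$; the passage to the canonical module is then \cite[Proposition 3.3.3]{BruHer98}, exactly the criterion you had in mind.
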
 

Next, we note a deletion-contraction formula for alternating sums of the graded Betti numbers $\beta_{i,j}$ which is an immediate consequence of Merino's theorem but does not seem to appear in literature. 

{\bf A Deletion-Contraction Formula for Alternating Numbers:}  For an integer $k$, let $\mathcal{A}_{k}=\sum_{i} (-1)^i \beta_{i,k}$ be the $k$-th alternating number of $H$.   We have the following deletion-contraction formula for the numbers $\mathcal{A}_k$:

\begin{proposition}{\rm {\bf (Deletion-Contraction for Alternating Numbers)}}\label{delete-contra_prop}

The numbers $\mathcal{A}_k(G)$ satisfy the following formula: 

\begin{equation}\label{alternate_form}
\mathcal{A}_k(G)+\mathcal{A}_{k-1}(G/e)=\mathcal{A}_{k}(G/e)+\mathcal{A}_k(G \setminus e).
\end{equation}
\end{proposition}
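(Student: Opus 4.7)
My plan is to deduce the identity by passing to the alternating Betti polynomial $K(M,t):=\sum_{i,j}(-1)^i\beta_{i,j}(M)\,t^j$, whose coefficient of $t^k$ is by definition $\mathcal{A}_k$. The core of the argument is to establish the generating-function identity
\begin{equation*}
K_R({\rm GC}_G,t) \;=\; (1-t)\,K_{R_e}({\rm GC}_{G/e},t) \;+\; K_R({\rm GC}_{G\setminus e},t),
\end{equation*}
from which one extracts the coefficient of $t^k$ on both sides to read off $\mathcal{A}_k(G)=\mathcal{A}_k(G/e)-\mathcal{A}_{k-1}(G/e)+\mathcal{A}_k(G\setminus e)$, which is exactly the stated relation after rearrangement.

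To obtain this identity I would apply $K$-polynomial additivity (a standard consequence of the long exact sequence in $\mathrm{Tor}(-,\mathbb{K})$) to the $G$-parking Tutte short exact sequence of Theorem~\ref{GparkTutte_theo}. This yields an identity of $K$-polynomials computed over $R_e$, which I then rewrite for each module in its natural ambient polynomial ring. Two ingredients drive the rewriting: first, the linear form $L=x_1-x_2$ acts as a non-zero divisor on ${\rm GC}_H$ for $H\in\{G,\,G\setminus e\}$ --- exactly the feature singled out in the remark following Theorem~\ref{Tutteses_theo} as distinguishing the $G$-parking setting from the toppling one --- so that tensoring a minimal free $R$-resolution with $R_e=R/(L)$ produces a minimal free $R_e$-resolution with identical graded Betti numbers, giving $K_{R_e}({\rm GC}_H\otimes_R R_e,t)=K_R({\rm GC}_H,t)$; and second, $x_{1,2}$ is the sink variable of $G/e$ and hence a non-zero divisor on the Cohen--Macaulay module ${\rm GC}_{G/e}$, so that the standard short exact sequence $0\to{\rm GC}_{G/e}(-1)\xrightarrow{x_{1,2}}{\rm GC}_{G/e}\to{\rm GC}_{G/e}/x_{1,2}\cdot{\rm GC}_{G/e}\to 0$ yields $K_{R_e}\bigl({\rm GC}_{G/e}/x_{1,2}\cdot{\rm GC}_{G/e},t\bigr)=(1-t)\,K_{R_e}({\rm GC}_{G/e},t)$.

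Equivalently, one can bypass the short exact sequence and derive the same generating-function identity directly from the Hilbert-series form of Merino's theorem, $H({\rm GC}_H,t)=T_H(1,t)/(1-t)$, combined with the classical Tutte deletion-contraction $T_G=T_{G/e}+T_{G\setminus e}$ valid for a non-bridge non-loop edge; multiplying through by $(1-t)^{|V(G)|-1}$ and using $|V(G/e)|=|V(G)|-1$ to account for the extra factor of $(1-t)$ in front of $K_{R_e}({\rm GC}_{G/e},t)$ recovers exactly the displayed identity. The only substantive thing to verify is the regularity of the linear forms $L$ and $x_{1,2}$ on the respective critical modules; these are implicit in the very statement and proof of Theorem~\ref{GparkTutte_theo} and follow from the Cohen--Macaulayness of the critical modules together with the role of these forms as the relevant regular parameters in the $G$-parking picture.
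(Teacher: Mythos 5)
Your proposal is correct, and its primary argument takes a route that differs from the paper's actual proof, although the paper anticipates it: the paper proves Proposition \ref{delete-contra_prop} by writing the Hilbert series of each critical module as $\bigl(\sum_{i,j}(-1)^i\beta_{i,j}t^j\bigr)/(1-t)^{n}$ (respectively $(1-t)^{n-1}$ for $G/e$), invoking Merino's theorem to identify the numerators, and comparing coefficients --- this is exactly your ``equivalently, one can bypass the short exact sequence'' paragraph. Your main route instead takes Euler characteristics of the Tor long exact sequences attached to the two short exact sequences (multiplication by $x_{1,2}$ on ${\rm GC}_{G/e}$, and the $G$-parking Tutte sequence), which is precisely the alternative the paper only mentions in one sentence after the proof of Theorem \ref{vanimpeq_theo}. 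What your version buys is independence from Merino's theorem and from the Tutte polynomial altogether, plus an explicit statement of a point the paper leaves implicit: that regularity of $L=x_1-x_2$ on ${\rm GC}_G$ and ${\rm GC}_{G\setminus e}$ forces ${\rm Tor}^R_{>0}({\rm GC}_H,R_e)=0$, so the minimal $R$-resolution tensored with $R_e$ stays a minimal resolution and $\beta^{R_e}_{i,j}({\rm GC}_H\otimes_R R_e)=\beta^R_{i,j}({\rm GC}_H)$ --- this identification is also what silently underlies the paper's proof of Theorem \ref{vanimpeq_theo}. The paper's route is shorter once Merino's theorem is in hand. One small correction: the regularity of $x_1-x_2$ and of $x_{1,2}$ is not really ``implicit in the statement and proof of Theorem \ref{GparkTutte_theo}''; in the paper these are Propositions \ref{nonzero1-prop} and \ref{nonzero2-prop}, proved from the fact that the $G$-parking ideals are monomial ideals in which the sink variable does not appear, together with \cite[Proposition 3.3.3]{BruHer98} to pass from $R/M_H$ to its canonical module; you should cite or reproduce that argument rather than appeal to Theorem \ref{GparkTutte_theo}.
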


\begin{example}
\rm
Suppose $G$ is a triangle with vertices $v_1,v_2$ and $v_3$ and let $e=(v_1,v_2)$. The associated numbers are the following. 
\begin{center}
 $\mathcal{A}_k(G)=\mathcal{A}_k(G/e)=\mathcal{A}_k(G \setminus e)=0$ for $k<0$.
 $\mathcal{A}_0(G)=2, \mathcal{A}_1(G)=-3,\mathcal{A}_2(G)=0,\mathcal{A}_3(G)=1,~\mathcal{A}_k(G)=0$ for $k \geq 4,$.
 $\mathcal{A}_0(G/e)=1,\mathcal{A}_1(G/e)=0,\mathcal{A}_2(G/e)=-1,~\mathcal{A}_k(G/e)=0$ for $k \geq 3$.
 $\mathcal{A}_0(G \setminus e)=1,\mathcal{A}_1(G \setminus e)=-2,\mathcal{A}_2(G \setminus e)=1,~\mathcal{A}_k(G \setminus e)=0$ for $k \geq 3$.
 \end{center}
Note that Formula (\ref{alternate_form}) is satisfied for various values of $k$.
 \qed
\end{example}

Note that $\mathcal{A}_0(H)$ is the number of acyclic orientations on $H$ with a unique sink at $v_2$ and $\mathcal{A}_{-1}(H)=0$. Hence, as a corollary we obtain the familiar formula:

\begin{center}
$\mathcal{A}_0(G)=\mathcal{A}_0(G/e)+\mathcal{A}_0(G \setminus e)$.
\end{center}

{\bf Equality of Betti numbers of $G$ and $G \setminus e$ in terms of Vanishing of Betti numbers of $G/e$:}  Let $H$ be an undirected, connected, multigraph with $n$ vertices, $m$ edges and $\ell$ loops.  Following \cite[Page 2854]{ManSchWil15}, a connected $i$-partition of $H$ is a partition  $\Pi=\{V_1,\dots,V_i\}$  of its vertex set of size $i$ such that the subgraph induced by each subset is connected.  The \emph{connected partition graph} associated to this partition $\Pi$ is the multigraph with $\Pi$ as its vertex set and with $\hat{a}_{i,j}$ edges between $V_i$ and $V_j$, where  $a_{u,v}$ is the number of edges between vertices $u$ and $v$ of $G$ and $\hat{a}_{i,j}=\sum_{u \in V_i, v\in V_j}a_{u,v}$. We define $\beta_{i,j+\ell}(H)$ to be the number of acyclic orientations on connected partition graphs of $H$ of size $n-i$, with $m-j$ edges and with a unique sink at the partition containing $v_2$ (or any other fixed vertex).  
Note that from \cite{ManSchWil15} and the graded version of \cite[Corollary 3.3.9]{BruHer98}, we know that these are the graded Betti numbers of both ${\rm CPark}_H$ and ${\rm CTopp}_H$ (see Proposition \ref{critbetti_prop} ). 

\begin{theorem}\label{vanimpeq_theo}
Let $G$ be an undirected connected graph (with possible loops) and let $e$ be an edge of $G$ that is not a bridge. For any $(i,j) \in \mathbb{Z}^2$, if $\beta_{i,j}(G/e)=\beta_{i-1,j-1}(G/e)=\beta_{i-1,j}(G/e)=\beta_{i-2,j-1}(G/e)=0$, then $\beta_{i,j}(G)=\beta_{i,j}(G \setminus e)$.
\end{theorem}

\begin{example}
\rm
Suppose that $G$ is a triangle with vertices $v_1,v_2$ and $v_3$ and let $e=(v_1,v_2)$.  The Betti numbers are the following:

\begin{center}
$\beta_{0,0}(G)=2, \beta_{1,1}(G)=3,\beta_{2,3}(G)=1$,\\
$\beta_{0,0}(G/e)=1,\beta_{1,2}(G/e)=1$,\\
$\beta_{0,0}(G \setminus e)=1,\beta_{1,1}(G \setminus e)=2, \beta_{2,2}(G \setminus e)=1$.
\end{center}

For $(i,j)=(2,4)$ the hypothesis of Theorem \ref{vanimpeq_theo} are all satisfied and we have $\beta_{2,4}(G)=\beta_{2,4}(G \setminus e)=0$.  We currently do not know of examples where the hypothesis of Theorem \ref{vanimpeq_theo} are satisfied and $\beta_{i,j}(G)=\beta_{i,j}(G \setminus e) \neq 0$.
\qed 
\end{example}

At the time of writing, we do not know of a combinatorial proof of Theorem \ref{vanimpeq_theo}.



{\bf Connections to Divisor Theory and Related Sequences:} The toppling critical module has an interpretation in terms of divisor theory of graphs. This connection is implicit in \cite{ManStu13}. The punchline is that the Hilbert coefficients of the toppling critical module ${\rm CTopp}_G$ count linear equivalence classes of divisors $D$ whose rank of $D$ is equal to the degree of $D$ minus $g$, where $g$ is the genus of the graph (recall that $g=m-n+1$, where $m,~n$ are the number of edges and vertices, respectively of $G$).  It seems plausible that the toppling Tutte short exact sequence also has analogous combinatorial applications: one difficulty in this direction seems to be that the kernel of the map $\psi_1$ does not seem to have a simple description.

Short exact sequences in the same spirit as the Tutte short exact sequences have appeared in literature.  For instance,  \cite[Proposition 3.4]{OrlTer92}  construct a deletion-restriction short exact sequence of Orlik-Solomon algebras of (central) hyperplane arrangements.  We leave the question of relating the deletion-restriction short exact sequence associated to the graphical arrangement to the Tutte short exact sequences in this paper as a topic for further work.  In a related direction, Dochtermann and Sanyal \cite{DocSan14} use the graphical hyperplane arrangement to compute the minimal free resolution of the $G$-parking function ideal. This work has been extended to the toppling ideal by Mohammadi and Shokrieh \cite{MohSho16}.

\section{The Maps and Proof Sketch} 

  In this section, we describe the maps $\psi_i,\phi_i$ and sketch the proofs of Theorem \ref{GparkTutte_theo} and Theorem \ref{Tutteses_theo}. The maps arise naturally from the combinatorial interpretation of the minimal generators of the (toppling and $G$-parking) critical modules. 

A key input to this is the combinatorial description of the minimal generators and the first syzygies, i.e., a generating set for the relations between the minimal generators, of the critical modules implicit in \cite{ManSchWil15}.  We summarise this description here. 

The minimal generators of ${\rm CPark}_G$ are in bijection with acyclic orientations on $G$ with a unique sink at $v_2$.  The  minimal generators of ${\rm CTopp}_G$ are in bijection with equivalence classes of acyclic orientations on $G$ defined as follows \cite{BakNor07}. 

Given an acyclic orientation $\mathcal{A}$ on $G$,  consider the divisor: \begin{center} $D_{\mathcal{A}}=\sum_{v}({\rm outdeg}_{\mathcal{A}}(v)-1)(v)$,\end{center} where ${\rm outdeg}_{\mathcal{A}}(v)$ is the outdegree of $v$ with respect to the acyclic orientation $\mathcal{A}$. Define an equivalence class on the set of acyclic orientations on $G$ by declaring two acyclic orientations as equivalent if their associated divisors are linearly equivalent \cite[Section 1.6]{BakNor07}.  Given an acyclic orientation on $\mathcal{A}$, we denote its equivalence class by $[\mathcal{A}]$. Once a vertex $v_2$ say is fixed, $[\mathcal{A}]$ has a canonical representative: the acyclic orientation with a unique sink at $v_2$ that is equivalent to $\mathcal{A}$. Such an acyclic orientation exists and is unique \cite[Section 3.1]{BakNor07}.  Hence, the two critical modules have the same number of minimal generators. We refer to these generating sets as the \emph{standard generating sets}. Furthermore, by the right exactness of the tensor product functor they induce a generating set on the $R_e$-modules ${\rm CPark}_G\otimes_R R_e$ and ${\rm CTopp}_G \otimes_R R_e$ that we also refer to as the \emph{standard generating sets}.

The first syzygies of the critical modules have (minimal) generators that correspond to certain acyclic orientations on graphs obtained by contracting a pair of vertices that are connected by an edge.   We refer to these  as the \emph{standard syzygies}. Also, by the right exactness of the tensor product functor, they induce a generating set of the first syzygies of the corresponding $R_e$-modules ${\rm CPark}_G\otimes_R R_e$ and ${\rm CTopp}_G \otimes_R R_e$ that we refer to by the same terminology.  We refer to Subsection \ref{freepres_subsect} for more details. 

{\bf The Maps $\psi_0$ and $\phi_0$:}  We use the free presentation described above to define $\psi_0$ and $\phi_0$.   Let $m_e$ be the multiplicity of the edge $e$.   The map $\psi_0$ takes the minimal generator $\mathcal{A}$ on $G/e$ corresponding to an acyclic orientation with a unique sink at $v_{1,2}$ to $x_{1,2}^{m_e-1}\mathcal{A}_{e^{+}} \in {\rm CPark}_{G} \otimes_R R_e$, where $\mathcal{A}_{e^{+}}$ is the minimal generator corresponding to the acyclic orientation obtained by further orienting $e$ such that $v_1$ is the source of $e$. We identify this minimal generator with the corresponding acyclic orientation.  Note that the resulting acyclic orientation also has a unique sink at $v_2$.

We turn to the definition of $\phi_0$. We distinguish between two cases: $m_e=1$ and $m_e>1$. 

 Consider the case where $m_e=1$.  Suppose $\mathcal{A'}$ is an acyclic orientation on $G$ with a unique sink at $v_2$, following \cite{ManSchWil15} we say that an edge of $G$ is contractible on  $\mathcal{A'}$ if the orientation $\mathcal{A'}/e$ induced by $\mathcal{A'}$ on $G/e$ is acyclic.  If $e$ is not contractible on $\mathcal{A'}$, then $v_1$ must be a source of at least one edge on $\mathcal{A'}/e$ and hence, $\mathcal{A'} \setminus e$ has a unique sink at $v_2$.  The map $\phi_0$ is defined as follows:

 
\[
\phi_0(\mathcal{A'})=
\begin{cases}
\mathcal{A'}\setminus e,\text{ if the edge $e$ is not contractible on $\mathcal{A'}$},\\
0, \text{ otherwise.}
\end{cases}
\]

Suppose that $m_e>1$.  We define $\phi_0(\mathcal{A'})=\mathcal{A'} \setminus e$ for every standard generator $\mathcal{A}'$ of ${\rm CPark}_{G} \otimes_R R_e$. Note that apriori the maps $\psi_0$ and $\phi_0$  are only candidate maps and their well-definedness needs further argumentation.  We will carry this out in Section \ref{gparktutteses_Sect}.

{\bf The Maps $\psi_1$ and $\phi_1$:}  Suppose that $\mathcal{A}$ is an acyclic orientation on $G/e$. Let $\mathcal{A}_{e^{+}}$  and $\mathcal{A}_{e^{-}}$ be acyclic orientations on $G$ obtained by further orienting $e=(v_1,v_2)$ such that $v_1$ and $v_2$ is the source of $e$, respectively. If $e$ is a simple edge, then the map $\psi_1$ takes the generator $[\mathcal{A}]$ of ${\rm CTopp}_{G/e}$ corresponding to the class of $\mathcal{A}$ to  $[\mathcal{A}_{e^{+}}]+[\mathcal{A}_{e^{-}}]$ in ${\rm CTopp}_{G} \otimes_R R_e$. More generally, if $e$ is an edge of multiplicity $m_e$, then the map $\psi_1$ takes the generator $[\mathcal{A}]$ of ${\rm CTopp}_{G/e} \otimes_R R_e$ to  $x_{1,2}^{m_e-1}[\mathcal{A}_{e^{+}}]+x_{1,2}^{m_e-1}[\mathcal{A}_{e^{-}}]$ in ${\rm CTopp}_{G} \otimes_R R_e$. 

Suppose that $\mathcal{A'}$ is an acyclic orientation on $G$, let $\mathcal{A'}\setminus e$ be the acyclic orientation on $G \setminus e$ induced by $\mathcal{A'}$ i.e., by deleting the edge $e$. The map $\phi_1$ takes the generator $[\mathcal{A'}]$ of ${\rm CTopp}_G \otimes_R R_e$ to $[\mathcal{A'} \setminus e]$ in ${\rm CTopp}_{G \setminus e} \otimes_R R_e$.

Note that the fact that the maps $\psi_1$ and $\phi_1$ are well-defined requires proof. The proof of well-definedness consists of two parts: i. showing that the maps do not depend on the choice of representatives of the classes $[\mathcal{A}]$ and $[\mathcal{A'}]$, ii.   showing that they induce $R_e$-module maps $\psi_1: {\rm CTopp}_{G/e} \rightarrow  {\rm CTopp}_G \otimes_R R_e$ and $\phi_1: {\rm CTopp}_G \otimes_R R_e \rightarrow {\rm CTopp}_{G \setminus e} \otimes_R R_e$.

Next, we outline the proofs of Theorem \ref{GparkTutte_theo}  and Theorem \ref{Tutteses_theo}.  A philosophy that is adopted in both these proofs is the following: ``the critical module associated to $G$ has the same structure as those associated to both $G/e$ and $G \setminus e$ except that the contraction and deletion operations respectively modify them slightly and the maps $\psi_i$ and $\phi_i$ (for $i=0$ and $1$) capture this modification". 
Both the proofs consist of the following two parts.
\begin{enumerate}

\item {\bf The Complex property:} In this step, we show that the sequence of modules in  Theorem \ref{GparkTutte_theo} and Theorem \ref{Tutteses_theo} is a complex of $R_e$-modules. To this end, we verify that the image of $\psi_i$ is contained in the kernel of $\phi_i$.  

\item {\bf The Homology of the Tutte complex:}  We show that the homology of the $G$-parking and the toppling Tutte complex  is zero at every homological degree.  In both cases, the argument is straightforward in homological degrees zero and two. 

The argument is more involved at homological degree one: we must show that kernel of $\phi_i$ is equal to the image of $\psi_i$.  In order to give a flavour of the argument, we outline the argument for the toppling Tutte complex. The overall strategy is the same for the $G$-parking Tutte complex. 

The key step is to explicitly compute the kernel of $\phi_1$. We show that ${\rm ker}(\phi_1)=\{ x_{1,2}^{m_e-1}[\mathcal{A}_{e^{+}}]+x_{1,2}^{m_e-1}[\mathcal{A}_{e^{-}}]|~$ over all acyclic orientations $\mathcal{A}$ on $G/e\}$.  
For this, we use the combinatorial description of the syzygies of the toppling critical module from Subsection \ref{freepres_subsect}. The basic idea is as follows:  Suppose $\alpha \in {\rm ker}(\phi_1)$ and  that  $\alpha=\sum_{[\mathcal{A}]} p_{[\mathcal{A}]} \cdot [\mathcal{A}]$  in terms of the standard generating set of ${\rm CTopp}_G \otimes_R R_e$.  Since $\alpha \in {\rm ker}(\phi_1)$ we know that  $\sum_{[\mathcal{A}]} p_{[\mathcal{A}]} \cdot \phi_1([\mathcal{A}])=0$ and gives a syzygy of ${\rm CTopp}_{G \setminus e} \otimes_R R_e$. Hence, this syzygy can be written as an $R_e$-linear combination of the standard syzygies of ${\rm CTopp}_{G \setminus e} \otimes_R R_e$. 
Next, comparing the standard syzygies of ${\rm CTopp}_G \otimes_R R_e$ and ${\rm CTopp}_{G \setminus e} \otimes_R R_e$, we conclude that $\alpha$ is generated by the elements $x_{1,2}^{m_e-1}[\mathcal{A}_{e^{+}}]+x_{1,2}^{m_e-1}[\mathcal{A}_{e^{-}}]$ in ${\rm CTopp}_G \otimes_R R_e$.  The key idea behind comparing the standard syzygies of ${\rm CTopp}_G \otimes_R R_e$ and ${\rm CTopp}_{G \setminus e} \otimes_R R_e$ is that upon deleting the edge $e$, all the standard syzygies of ${\rm CTopp}_G \otimes_R R_e$ except the ones corresponding to contracting the edge $e$ carry over to ${\rm CTopp}_{G \setminus e} \otimes_R R_e$.  We refer to Proposition \ref{toppzeroh1_prop} for more details.

\end{enumerate}

\begin{remark}
\rm
We use the terminology $\psi_0,\psi_1$ and $\phi_0,\phi_1$ to reflect the fact by using the family $I_{G,t}$ from \cite{ManSchWil15} we can define a one parameter family of critical $R$-modules $C_{G,t}$ such that $C_{G,0}={\rm CPark}_G$ and $C_{G,1}={\rm CTopp}_G$.  It seems plausible that there is a Tutte short exact sequence for the critical module $C_{G,t}$ that interpolates between the two Tutte short exact sequences constructed here. The corresponding maps $\psi_t$ and $\phi_t$ seem more involved and we leave this for future work. \qed 
\end{remark}


\begin{example}{\rm{ \bf (Triangle)}}\label{triangle_ex}
\rm
Consider the case where $G=K_3$ a complete graph on three vertices labelled $v_1,v_2,v_3$ and $e=(v_1,v_2)$.  The graph $G/e$ is a multigraph on two vertices $(v_{1,2},v_3)$ with two multiple edges and $G \setminus e$ is a tree on three vertices with edges $(v_1,v_3)$ and $(v_2,v_3)$.

\begin{figure}
\centering
  \includegraphics[height=12cm]{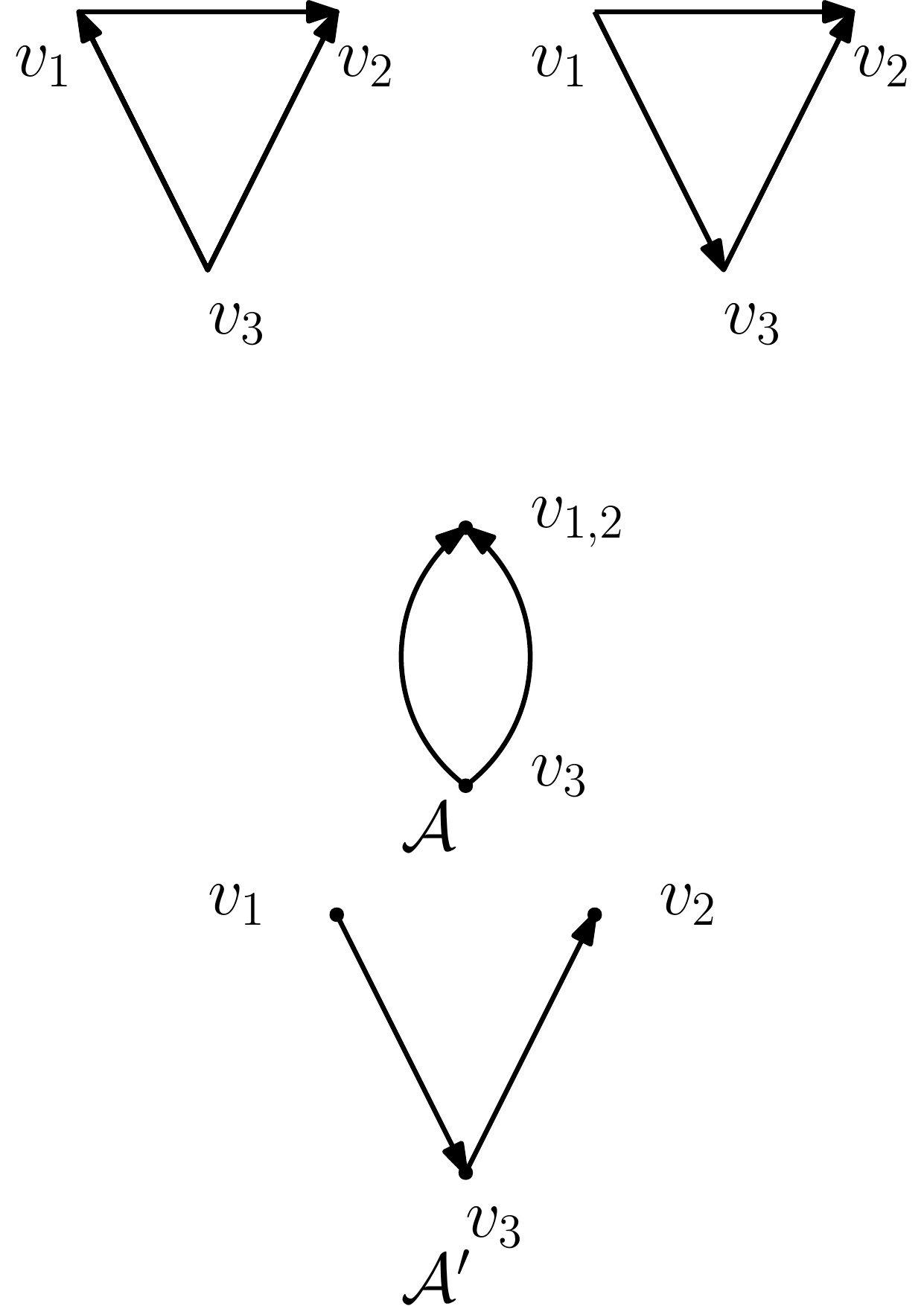}
  \caption{Acyclic Orientations Corresponding to the Minimal Generators of the Critical Modules: the case where $G=K_3$. The acyclic orientations on the top left and top right are $\mathcal{A}_{e^{+}}$ and $\mathcal{A}_{e^{-}}$ respectively.}\label{ex1_fig}
\end{figure}

The $G$-parking critical module ${\rm CPark}_G$ is generated by two elements $\mathcal{A}_{e^{+}}$ and $\mathcal{A}_{e^{-}}$ labelled by acyclic orientations shown in Figure \ref{ex1_fig} with the following relations (see Subsection \ref{freepresgparkcan_sect} for more details): 

\begin{center}
 $x_1 \cdot \mathcal{A}_{e^{+}}=0,$\\
 $x_3 \cdot \mathcal{A}_{e^{-}}=0,$\\
 $x_3 \cdot \mathcal{A}_{e^{+}}+x_1 \cdot \mathcal{A}_{e^{-}}=0$.
 \end{center}
 
 Note that $R_e/M_{G/e}$ and $R/M_{G \setminus e}$ are Gorenstein.  The  $G$-parking critical module ${\rm CPark}_{G/e}$ is generated by one element $\mathcal{A}$ labelled by the acyclic orientation with sink at $v_{1,2}$ shown in Figure \ref{ex1_fig}  with the relation:
 \begin{center}
 $x_3^2 \cdot \mathcal{A}=0$.
 \end{center}
 The $G$-parking critical module ${\rm CPark}_{G \setminus e}$ is also generated by one element $\mathcal{A'}$ labelled by the acyclic orientation with unique sink at $v_2$ shown in Figure \ref{ex1_fig} subject to relations:
\begin{center} 
 $x_1 \cdot \mathcal{A'}=0$,\\
 $x_3  \cdot\mathcal{A'}=0$.
 \end{center}
 The map $\psi_0$ takes $\mathcal{A}$ to $\mathcal{A}_{e^{+}}$ and is well-defined since $x_3^2 \cdot \psi_0(\mathcal{A})=x_3^2 \cdot \mathcal{A}_{e^{+}}=0$. Note that this relation  can be obtained from the defining relations of ${\rm CPark}_G \otimes_R R_e$ as $-x_{1,2}(x_3 \cdot \mathcal{A}_{{e}^{-}})+x_3(x_3 \cdot \mathcal{A}_{e^{+}}+x_{1,2} \cdot \mathcal{A}_{e^{-}})$.
 
 The map $\phi_0$ takes $\mathcal{A}_{e^{+}}$ to zero and $\mathcal{A}_{e^{-}}$ to $\mathcal{A'}$ and is well-defined (preserves relations).  
 
 Furthermore, the sequence is a complex since $\phi_0(\psi_0(\mathcal{A}))=0$,  the kernel of $\psi_0$ contains $x_{1,2} \cdot {\rm CPark}_{G/e}$ and the map $\phi_0$ is surjective.
 
 Next, we give a flavour of the argument for short exactness. At the homological degree one, the element $x_{1,2} \cdot \mathcal{A}_{{e}^{-}}$ is in the kernel of $\phi_0$. However, it is also in the image of $\psi_0$ since from the third defining relation of ${\rm CPark}_G$ we have $x_{1,2} \cdot \mathcal{A}_{{e}^{-}}=-x_3 \cdot \mathcal{A}_{e^{+}}=-x_3 \cdot \psi_0(\mathcal{A})$ and is hence, in the image of $\psi_0$. We generalise this argument in Section \ref{gparktutteses_Sect}.  Furthermore,  the kernel of the map $\psi_0$ turns to be precisely $x_{1,2} \cdot {\rm CPark}_{G/e}$.  
 
 Merino's theorem follows by noting that $x_1-x_2$ is a regular element on ${\rm CPark}_G$ and ${\rm CPark}_{G \setminus e}$ and $x_{1,2}$ is a regular element on ${\rm CPark}_{G/e}$ and from the additivity of the Hilbert series in short exact sequences.
 The Hilbert series of ${\rm CPark}_G,{\rm CPark}_{G/e}$ and ${\rm CPark}_{G \setminus e}$ are $(2+t)/(1-t),(1+t)/(1-t)$ and $1/(1-t)$. Hence, the Hilbert series of ${\rm CPark}_G \otimes_R R_e,{\rm CPark}_{G/e}/(x_{1,2} \cdot {\rm CPark}_{G/e})$ and ${\rm CPark}_{G \setminus e} \otimes_R R_e$ are $2+t,1+t$ and $1$, respectively.

 The toppling critical module of $G$ is also generated by two elements $[\mathcal{A}_{e^{+}}]$ and $[\mathcal{A}_{e^{-}}]$ that naturally correspond to equivalence classes of acyclic orientations $\mathcal{A}_{e^{+}}$ and $\mathcal{A}_{e^{-}}$ shown in Figure \ref{ex1_fig}, with the following relations ((see Subsection \ref{freepres_subsect} for more details)):
 
 \begin{center}  
 $x_1 \cdot [\mathcal{A}_{e^{+}}]+x_2 \cdot [\mathcal{A}_{e^{-}}]=0,$\\
 $x_2 \cdot [\mathcal{A}_{e^{+}}]+x_3 \cdot [\mathcal{A}_{e^{-}}]=0,$\\
 $x_3 \cdot [\mathcal{A}_{e^{+}}]+x_1 \cdot [\mathcal{A}_{e^{-}}]=0$.
 \end{center}

 The toppling critical modules of $G/e$ is generated by one element $[\mathcal{A}]$ labelled by the equivalence class of the acyclic orientation $\mathcal{A}$ with sink at $v_{1,2}$, as shown in Figure \ref{ex1_fig}, with the relation:

\begin{center}
 $(x_3^2+x_{1,2}^2) \cdot [\mathcal{A}]=0$.
 \end{center}

 The toppling critical module of $G \setminus e$ is generated by one element $[\mathcal{A'}]$ labelled by the equivalence class of the acyclic orientation $\mathcal{A'}$, as shown in Figure \ref{ex1_fig}, and with the relations:
 
 \begin{center}
 $(x_1+x_3) \cdot [\mathcal{A'}]=0,$\\
  $(x_3+x_2) \cdot [\mathcal{A'}]=0.$
\end{center}

Note that ${\rm CTopp}_{G \setminus e}$ is isomorphic to $R/\langle x_1+x_3,x_3+x_2 \rangle$. The map $\psi_1$ takes $[\mathcal{A}]$ to $[\mathcal{A}_{e^{+}}]+[\mathcal{A}_{e^{-}}]$ and the map $\phi_1$ takes both $[\mathcal{A}_{e^{+}}]$ and $[\mathcal{A}_{e^{-}}]$ to $[\mathcal{A'}]$. A quick check shows that these candidate maps are indeed well-defined.  Furthermore, note that the element $x_3 \cdot [\mathcal{A}]$ is in the kernel of $\psi_1$ and is not contained in $x_{1,2} \cdot {\rm CTopp}_{G/e}$.  \qed
\end{example}

{\bf Acknowledgements:} We thank Spencer Backman for several illuminating discussions that helped shape this work.  We thank the anonymous referees for their valuable suggestions.  We thank Jugal Verma for sharing his knowledge of canonical modules. 








\section{Preliminaries}

In this section, we formally define the $G$-parking and toppling critical modules. Before this, we briefly recall the corresponding ideals followed by a discussion on canonical modules.  We also discuss a characterisation of equivalent acyclic orientations and a criterion for well-definedness of candidate maps between modules. They will be turn out to be useful in the forthcoming sections.

\subsection{The $G$-Parking Function Ideal and the Toppling Ideal}\label{gparktopppre_subsect}

 We start by defining the $G$-parking function ideal of a graph.  Fix a vertex $v_2$, say of  $G$. For each non-empty subset $S$ of vertices in $V(G) \setminus \{v_2\}$, associate a monomial $m_S=\prod_{v_j \in S} x_j^{{\rm deg}_S(v_j)}$, where ${\rm deg}_S(v_j)$ is the number of edges in $G$ one of whose vertices is $v_j$ and the other vertex is in the complement $\bar{S}=V \setminus S$ of $S$. The $G$-parking function ideal $M_G$ (with respect to $v_2$) is defined as  \begin{center} $M_G=\langle m_S|~\emptyset \neq S \subseteq V(G) \setminus \{v_2\} \rangle$. \end{center}
 
 Note that $M_G$ depends on the choice of a vertex and  we take this vertex to be $v_2$.  For $M_{G/e}$ (recall that $e=(v_1,v_2)$), we take this to be the vertex $v_{1,2}$ (the vertex obtained by contracting $v_1$ and $v_2$). 

 The toppling ideal of a graph is a binomial ideal that captures the chip firing moves on $G$. It has been studied in several works recently, for instance \cite{PerPerWil11}, \cite{ManStu13}. We briefly recall its definition here.  Let $n \geq 2$.  Let $Q_G=D_G-A_G$ be the Laplacian matrix of $G$, where $A_G$ is the vertex-vertex adjacency matrix of $G$ and $D_G={\rm diag}({\rm val}(v_1),\dots,{\rm val}(v_n))$ is the diagonal matrix with its diagonal entries as the valencies of the corresponding vertices. Let $L_G$, the Laplacian lattice of $G$, be the sublattice of $\mathbb{Z}^n$ generated  by the rows (or equivalently the columns) of $Q_G$.  Since the graph $G$ is connected, the Laplacian matrix $Q_G$ has rank $n-1$. Hence, the lattice $L_G$ also has rank $n-1$ and is a finite index sublattice of the root lattice $A_{n-1}$. The toppling ideal $I_G$ of $G$ is the lattice ideal of the Laplacian lattice $L_G$. By definition, 

\begin{center} $I_G= \langle {\bf x^u}-{\bf x^v}|~{\bf u},~{\bf v} \in \mathbb{Z}_{\geq 0}^{n}, ~{\bf u}-{\bf v}  \in L_G\rangle$.\end{center}

\subsection{The Canonical Module of a Graded Ring}\label{can_subsect}

Recall from the introduction that both the critical modules are defined, up to a twist, as canonical modules of certain quotients of the polynomial ring. Hence, we start by briefly recalling the notion of canonical module of a graded ring. We refer to \cite[Chapter 3]{BruHer98} and \cite[Chapter 13, Section 4]{MilStu05} for a more detailed treatment of this topic.  However, we only deal with the critical modules in terms of their free presentations and these can be described in terms of the data of the underlying graph. Hence, a reader may choose to skip this subsection (and in principle, also the definition of the critical modules) and directly proceed to the free presentation of the critical modules presented in the next subsection.

Let $\mathfrak{R}$ be a graded ring with a unique homogenous maximal ideal  $\mathfrak{m}$ that is also maximal in the usual (ungraded) sense and of Krull dimension $\kappa$. Following \cite[Definition 3.6.8]{BruHer98} an $\mathfrak{R}$-module $\mathfrak{C}$ is called a \emph{canonical module} $\omega_\mathfrak{R}$ of $\mathfrak{R}$ if 
\begin{center}
$
{\rm Ext}_{\mathfrak{R}}^{i}(\mathfrak{R}/\mathfrak{m},\mathfrak{C}) \cong
\begin{cases}
0, \text{ for $i \neq \kappa$},\\
\mathfrak{R}/\mathfrak{m}, \text{otherwise}.
\end{cases}
$.
\end{center}
Note that the isomorphisms above are homogenous isomorphisms.  Note that the definition of a canonical module does not guarantee its existence and is, in general, a subtle issue.  
By \cite[Proposition 3.6.9]{BruHer98}, a canonical module if it exists is unique up to homogenous isomorphism.

 The existence of a canonical module for the $G$-parking and toppling ideals follows from the following facts.  By Example \cite[3.6.10]{BruHer98}, the standard graded polynomial ring $R=\mathbb{K}[x_1,\dots,x_n]$ over any field $\mathbb{K}$   is a Gorenstein ring and hence, has a canonical module: $R(-n)$ i.e., $R$ twisted \footnote{Recall that for a graded $\mathfrak{R}$-module $\mathfrak{M}$,  the $i$-th twist $\mathfrak{M}(i)$ of $\mathfrak{M}$, for an integer $i$, is the $\mathfrak{R}$-module defined as $(\mathfrak{M}(i))_j=\mathfrak{M}_{i+j}$. } by $-n$. 
 Furthermore, from \cite[Part (b), Proposition 3.6.12]{BruHer98} it follows that any Cohen-Macaulay, graded quotient ring of the graded polynomial ring has a canonical module. Since both $R/I_G$ and $R/M_G$ are graded Cohen-Macaulay rings of depth and dimension one (\cite[Proposition 7.3]{PerPerWil11} and from their minimal free resolutions in \cite{ManSchWil15}), they have canonical modules that are unique up to homogenous isomorphism. Furthermore, by  \cite[Part (b), Proposition 3.6.12]{BruHer98}, the canonical module has an explicit description as ${\rm Ext}^{t}_R(R/I,R(-n))$, where $t$ is the height of $I$.  Hence, the canonical module can be computed by applying graded ${\rm Hom}_R(\_,R(-n))$ to a minimal free resolution of $R/I$ as an $R$-module and taking the $t$-th homology of the resulting complex. In particular, the canonical module is isomorphic to the cokernel of the dual (with the appropriate twist) of the highest differential in the minimal free resolution of $R/I$.  Since the minimal free resolution of the toppling ideal and the $G$-parking function ideal can be described in purely combinatorial terms, this leads to a combinatorial description of their canonical module.  One point to note before moving to the definition of the two critical modules is that we primarily regard the critical modules as (graded) modules over the polynomial ring and not as modules  over the corresponding quotient ring.
\subsection{Critical Modules and their Free Presentations}

In this subsection, we define both the critical modules, and then discuss their free presentations.   Recall that $\ell$ is the number of loops of $G$.



\begin{definition} {\rm \bf ($G$-parking Critical Module)}
The $G$-parking critical module ${\rm CPark}_G$ is defined as $\omega_{R/M_G}(\ell)$, where $\omega_{R/M_G}$ is the canonical module of $R/M_G$ and $M_G$ is the $G$-parking function ideal of $G$ .
\end{definition}

\begin{definition} {\rm \bf (Toppling Critical Module)}
The toppling critical module ${\rm CTopp}_G$ is defined as $\omega_{R/I_G}(\ell)$, where $\omega_{R/I_G}$ is the canonical module of $R/I_G$ and $I_G$ is the toppling ideal of $G$.
\end{definition}


\subsubsection{A Free Presentation of the $G$-Parking Critical Module}\label{freepresgparkcan_sect}

We recall a free presentation of ${\rm CPark}_G$ that is implicit in the minimal free resolution of $M_G$ \cite[Section 4]{ManSchWil15}. Recall that the minimal generators of ${\rm CPark}_G$ are labelled by acyclic orientations on $G$ with a unique sink at $v_2$. The minimal first syzygies of  ${\rm CPark}_G$ are labelled by acyclic orientations $\mathcal{A}$ on connected partition graphs $G/(v_i,v_j)$  (where $v_i$ and $v_j$ are connected by an edge) with a unique sink at the partition containing $v_2$. We now describe the relation corresponding to such a pair $(\mathcal{A},G/(v_i,v_j))$. Suppose that $\mathcal{A}_{(v_i,v_j)^{+}}$ and $\mathcal{A}_{(v_i,v_j)^{-}}$ are the acyclic orientations on $G$ obtained by further orienting every edge between $v_i$ and $v_j$ such that $v_i$ and $v_j$ is the source respectively.  Let $m_{i,j}$ be the number of edges between $(v_i,v_j)$.  Note that at least one of $\mathcal{A}_{(v_i,v_j)^{+}}$ and $\mathcal{A}_{(v_i,v_j)^{-}}$  has a unique sink at $v_2$.
 The relation corresponding to $(\mathcal{A},G/(v_i,v_j))$ is the following: 
\[\begin{cases}
x_i^{m_{i,j}} \cdot \mathcal{A}_{(v_i,v_j)^{+}}, \text{ if $j=2$ or $\mathcal{A}_{(v_i,v_j)^{-}}$ does not have a unique sink,}\\
x_j^{m_{i,j}} \cdot \mathcal{A}_{(v_i,v_j)^{-}},  \text{ if $i=2$ or $\mathcal{A}_{(v_i,v_j)^{+}}$ does not have a unique sink},\\
x_i^{m_{i,j}} \cdot \mathcal{A}_{(v_i,v_j)^{+}}+x_j^{m_{i,j}} \cdot \mathcal{A}_{(v_i,v_j)^{-}}, \text{ otherwise}.
\end{cases}
\]

See Example \ref{triangle_ex} for the case of a triangle. We refer to this relation as the standard syzygy corresponding to the pair $(\mathcal{A},G/(v_i,v_j))$. We refer to each of the above three types of syzygies as type one, two and three respectively.  
As mentioned in Subsection \ref{can_subsect}, the correctness of this free presentation follows from the characterisation of the canonical module of $R/M_G$ as ${\rm Ext}^{n-1}(R/M_G,R(-n))$.

\subsubsection{A Free Presentation of the Toppling Critical Module}\label{freepres_subsect}

In this subsection, we recall a free presentation of the toppling critical module ${\rm CTopp}_G$, that is also implicit in \cite[Section 3]{ManSchWil15}. Recall that the toppling critical module has a minimal generating set that is in bijection with the equivalence classes of acyclic orientations on $G$ with a unique sink at $v_2$. This equivalence class is defined by declaring two acyclic orientations $\mathcal{A}_1$ and $\mathcal{A}_2$ to be equivalent if the associated divisors $D_{\mathcal{A}_1}$ and $D_{\mathcal{A}_2}$ are linearly equivalent. By $[\mathcal{A}]$, we denote the minimal generator corresponding to the equivalence class of $\mathcal{A}$.

In the following, we describe a minimal generating set for the first syzygies of ${\rm CTopp}_G$. This minimal generating set is in bijection with equivalence classes of acyclic orientations on connected partition graphs $\mathcal{P}_{i,j}$ of $G$ of size $n-1$.  The graph $\mathcal{P}_{i,j}$ is obtained by contracting a pair of adjacent vertices $(v_i,v_j)$ of $G$ i.e., by contracting all the edges between $(v_i,v_j)$ simultaneously. Note that the equivalence class of acyclic orientations on $\mathcal{P}_{i,j}$ is defined as before by treating $\mathcal{P}_{i,j}$ as a graph. This syzygy corresponding to the equivalence class of the acyclic orientation $\mathcal{A}$ on $\mathcal{P}_{i,j}$ has the following explicit description. Suppose that $\mathcal{A}_{(v_i,v_j)^{+}}$ and $\mathcal{A}_{(v_i,v_j)^{-}}$ are acyclic orientations on $G$ obtained from $\mathcal{A}$ by further orienting all edges between $(v_i,v_j)$ such that the source is $v_i$ and $v_j$ respectively. The syzygy corresponding to $\mathcal{P}_{i,j}$ is given by \begin{center}$x_i^{m_{i,j}}[\mathcal{A}_{(v_i,v_j)^{+}}]+x_j^{m_{i,j}}[\mathcal{A}_{(v_i,v_j)^{-}}]$,\end{center} where $m_{i,j}$ is the number of edges between $v_i$ and $v_j$.  Note that we have assumed that $\mathbb{K}$ has characteristic two.  See \cite[Example 2.6]{ManSchWil15} for example of the kite graph.  We know from \cite{ManSchWil15} that this does not depend on the choice of representatives in the equivalence class of $\mathcal{A}$. The corresponding argument is essentially the same as Lemma \ref{psican_lem}.  In the following, we refer to this minimal generating set and its syzygies as the standard generating set and the standard syzygies for the critical module respectively.  

Next, we extend the notion of standard generating set and standard syzygies to ${\rm CTopp}_G \otimes_R R_e$ as an $R_e$-module.   By the right exactness of the tensor product functor, we know that a generating set for the $R_e$-module ${\rm CTopp}_G \otimes_R R_e$ and for its syzygies can be obtained from the corresponding sets for ${\rm CTopp}_G$ by tensoring each element with $1$ (the multiplicative identity of $R_e$).  We refer to these sets as the standard generating set and the standard syzygies of ${\rm CTopp}_G \otimes_R R_e$.  The standard syzygies  of ${\rm CTopp}_G \otimes_R R_e$ are obtained by replacing $x_i$ by $x_{1,2}$ from the corresponding elements in ${\rm CTopp}_G$ whenever $x_i$ is $x_1$ or $x_2$. 
The correctness of this free presentation follows from the characterisation of the canonical module of $R/I_G$ as ${\rm Ext}^{n-1}(R/I_G,R(-n))$, see Subsection \ref{can_subsect} for more details.

\subsection{Equivalence of Acyclic Orientations}\label{equiv_subsect}

Recall that we defined two acyclic orientations $\mathcal{A}_1$ and $\mathcal{A}_2$ on $G$ to be equivalent if their associated divisors $D_{\mathcal{A}_1}=\sum_{v}({\rm outdeg}_{\mathcal{A}_1}(v)-1)(v)$  and $D_{\mathcal{A}_2}=\sum_{v}({\rm outdeg}_{\mathcal{A}_2}(v)-1)(v)$ are linearly equivalent, where ${\rm outdeg}_{\mathcal{A}_i}(v)$ is the outdegree of $v$ with respect to the acyclic orientation $\mathcal{A}_i$.  The following characterisation of this equivalence in terms of reversal of a source or a sink from \cite{Mos72,Bac17} turns out be very useful. A source-sink reversal oppositely orients all the edges incident on a source or a sink. 
It is immediate that the resulting orientation is acyclic and is equivalent to the original one. The converse also holds. 

\begin{theorem}\cite{Mos72,Bac17}\label{acycequiv_theo}
Acyclic orientations $\mathcal{A}_1$ and $\mathcal{A}_2$ are equivalent if and only there is a sequence of source-sink reversals transforming $\mathcal{A}_1$ to $\mathcal{A}_2$. 
\end{theorem}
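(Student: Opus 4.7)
I would prove the two directions separately, the easy one by direct computation and the hard one by induction anchored on the B\"acker--Norine uniqueness theorem.

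\textbf{Easy direction.} If $\mathcal{A}'$ is the reversal of $\mathcal{A}$ at a source $v$, then $\mathrm{outdeg}_{\mathcal{A}'}(v) = 0$ and $\mathrm{outdeg}_{\mathcal{A}'}(u) = \mathrm{outdeg}_{\mathcal{A}}(u) + m_{uv}$ for each neighbour $u$, so
\[
D_{\mathcal{A}'} - D_{\mathcal{A}} \;=\; -\mathrm{val}(v)(v) + \sum_{u \sim v} m_{uv}(u) \;=\; -Q_G e_v \;\in\; L_G,
\]
giving $D_{\mathcal{A}'} \sim D_{\mathcal{A}}$. Sink reversals contribute $+Q_G e_u$ by the symmetric calculation, so any finite sequence of source-sink reversals preserves the linear equivalence class.

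\textbf{Hard direction.} Fix $v_2$ as sink. By the B\"acker--Norine uniqueness result recalled in the introduction, each $\sim$-class of orientation divisors $D_{\mathcal{A}}$ contains a unique representative $\mathcal{A}^{\ast}$ with unique sink $v_2$; so if $\mathcal{A}_1 \sim \mathcal{A}_2$ they descend to a common $\mathcal{A}^{\ast}$ and it is enough to connect an arbitrary $\mathcal{A}$ to its canonical $\mathcal{A}^{\ast}$ by source-sink reversals. Write $D_{\mathcal{A}^{\ast}} - D_{\mathcal{A}} = Q_G f$ with $f \in \mathbb{Z}_{\geq 0}^{V}$ and $f(v_2)=0$ (such $f$ is unique because $\ker Q_G = \mathbb{Z}\cdot \mathbf{1}$). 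I would induct on $|f| := \sum_v f(v)$. If $f=0$ then $D_{\mathcal{A}} = D_{\mathcal{A}^{\ast}}$, so the set of vertices with outdegree $0$ is the same in both orientations, namely $\{v_2\}$; hence $\mathcal{A}$ itself has unique sink $v_2$ and uniqueness forces $\mathcal{A} = \mathcal{A}^{\ast}$. If $f \neq 0$ then $\mathcal{A} \neq \mathcal{A}^{\ast}$, so $\mathcal{A}$ has a sink $u \neq v_2$; at such a $u$,
\[
(Q_G f)(u) \;=\; D_{\mathcal{A}^{\ast}}(u) - D_{\mathcal{A}}(u) \;\geq\; 0 - (-1) \;=\; 1,
\]
i.e.\ $\mathrm{val}(u)\,f(u) - \sum_{v \sim u} m_{uv}\,f(v) \geq 1$, which forces $f(u) \geq 1$ (otherwise the left side is $\leq 0$). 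Reversing the sink $u$ produces an acyclic $\mathcal{A}'$ with $D_{\mathcal{A}'} - D_{\mathcal{A}} = Q_G e_u$, so the new parameter $f' = f - e_u$ still satisfies $f' \geq 0$, $f'(v_2)=0$, and $|f'| = |f|-1$; induction applies.

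\textbf{Main obstacle.} The essential input is the B\"acker--Norine uniqueness theorem: it drives both the base case (equality of divisors upgrades to equality of acyclic orientations) and the existence of a non-$v_2$ sink of $\mathcal{A}$ whenever $f \neq 0$. The one conceptual twist is realising that one should reverse \emph{sinks} (not sources) in the inductive step, since reversing a source would instead \emph{increase} $|f|$ by $1$. With that orientation of the induction, the inductive mechanics reduce to the single chip-firing inequality displayed above, plus the routine check that reversing a sink of an acyclic orientation preserves acyclicity (the reversed vertex becomes a source, which cannot lie on a directed cycle).
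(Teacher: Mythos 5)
The paper itself offers no proof of this statement: it is quoted from Mosesian and Backman, with only the one-line remark that a single source--sink reversal preserves acyclicity and the divisor class, so there is no internal argument to compare yours against. Your plan is the standard one and is sound in outline. The easy direction is exactly the Laplacian-column computation you give (with the paper's convention $Q_G=D_G-A_G$, a source reversal at $v$ shifts $D_{\mathcal{A}}$ by $-Q_Ge_v$ and a sink reversal by $+Q_Ge_u$). For the hard direction, anchoring on the Baker--Norine statement quoted in the introduction (existence and uniqueness, in each class, of an acyclic orientation with unique sink at $v_2$ -- note the attribution is Baker--Norine, cited as \cite{BakNor07}, not ``B\"acker--Norine'') is legitimate and non-circular, and your base case, the existence of a sink $u\neq v_2$ whenever $\mathcal{A}\neq\mathcal{A}^{\ast}$, the inequality forcing $f(u)\geq 1$, and the preservation of acyclicity under sink reversal are all correct as stated.

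The one genuine gap is the opening move of the induction: you \emph{write} $D_{\mathcal{A}^{\ast}}-D_{\mathcal{A}}=Q_Gf$ with $f\in\mathbb{Z}_{\geq 0}^{V}$ \emph{and} $f(v_2)=0$, but your parenthesis only justifies uniqueness. Linear equivalence gives an integral $f$, and $f(v_2)=0$ pins it down, but non-negativity of that particular $f$ is a real claim (it says the minimum of the firing vector is attained at $v_2$), and your induction leans on $f\geq 0$ twice -- to force $f(u)\geq 1$ at the sink $u$ and to keep $f'\geq 0$. The claim is true but needs an argument, for instance: if $\min f<0$, let $S$ be the set of vertices attaining the minimum, so $v_2\notin S$ and $S\neq V$; summing $(Q_Gf)(u)=\mathrm{outdeg}_{\mathcal{A}^{\ast}}(u)-\mathrm{outdeg}_{\mathcal{A}}(u)$ over $u\in S$, the internal edges cancel and one gets that the number of cut edges oriented out of $S$ by $\mathcal{A}^{\ast}$ minus the number oriented out of $S$ by $\mathcal{A}$ equals $\sum_{u\in S,\,w\notin S}m_{uw}\bigl(f(u)-f(w)\bigr)$, which is at most minus the size of the cut; hence every cut edge points into $S$ under $\mathcal{A}^{\ast}$, and a sink of the restriction of $\mathcal{A}^{\ast}$ to $S$ is then a sink of $\mathcal{A}^{\ast}$ in $G$ different from $v_2$, contradicting uniqueness of its sink. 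Alternatively you can bypass the lemma: drop the normalisation $f(v_2)=0$, take any non-negative solution (add a large constant), and key the inductive step on $\mathcal{A}\neq\mathcal{A}^{\ast}$ rather than on $f\neq 0$; everything else in your step goes through verbatim, since $f=0$ still forces $D_{\mathcal{A}}=D_{\mathcal{A}^{\ast}}$ and hence $\mathcal{A}=\mathcal{A}^{\ast}$. With either repair the proof is complete.
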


This characterisation allows us to define a metric $d$ on the set of equivalent acyclic orientations as follows: 
\begin{center}

$d(\mathcal{A}_1,\mathcal{A}_2)$ is the minimum number of source or sink reversals transforming $\mathcal{A}_1$ to $\mathcal{A}_2$. 

\end{center}
Note that $d$ satisfies the metric axioms.

\subsection{Criterion for Well-definedness of Candidate Maps between Modules}

In this subsection, we record a criterion for the well-definedness of a candidate map $f$ between two finitely presented modules $M_1$ and $M_2$ over a commutative ring $R$. This criterion is well known, we include proofs for completeness and easy access. The candidate map $f$ is given by specifying its image on a generating set of $M_1$, and the modules $M_1$ and $M_2$ are given in terms of a finite free presentation. 

\begin{proposition}\label{relpre_prop}
The candidate map $f$ is well-defined if and only if it preserves a generating set of the first syzygy module of $M_1$.
  \end{proposition}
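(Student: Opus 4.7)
The proposition is the standard statement that a map between finitely presented modules descends from a map on a generating set if and only if it respects the relations, restated in terms of a generating set of the first syzygy module. My plan is to unpack both directions through the universal property of the cokernel of a free presentation.

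First I would fix notation. Choose a finite generating set $g_1,\dots,g_n$ of $M_1$ and let $F_0$ be the free $R$-module on basis $\varepsilon_1,\dots,\varepsilon_n$, with surjection $\pi: F_0 \to M_1$ sending $\varepsilon_i \mapsto g_i$. The first syzygy module is by definition $K := \ker \pi$, and the finite presentation provides a chosen finite generating set $s_1,\dots,s_m$ of $K$, with each $s_k = \sum_i r_{k,i}\varepsilon_i$. The candidate map $f$ is the data of elements $h_i := f(g_i) \in M_2$, which I would package as the unique $R$-linear map $\tilde f : F_0 \to M_2$ with $\tilde f(\varepsilon_i) = h_i$; this exists since $F_0$ is free. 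With this setup, the statement ``$f$ preserves a generating set of the first syzygy module'' means precisely that $\tilde f(s_k) = \sum_i r_{k,i}h_i = 0$ in $M_2$ for $k=1,\dots,m$.

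For the forward direction, suppose $f$ is a well-defined $R$-module homomorphism $M_1 \to M_2$. For any syzygy $s_k = \sum_i r_{k,i}\varepsilon_i \in K$, applying $\pi$ gives $\sum_i r_{k,i} g_i = 0$ in $M_1$, and applying $f$ yields $\sum_i r_{k,i} h_i = 0$, so $\tilde f$ vanishes on every chosen generator of $K$. For the converse, assume $\tilde f(s_k)=0$ for all $k$. Because $\tilde f$ is $R$-linear and $s_1,\dots,s_m$ generate $K$ as an $R$-module, $\tilde f$ vanishes on all of $K$. The universal property of the quotient $M_1 = F_0/K$ then produces a unique $R$-linear map $\bar f : M_1 \to M_2$ with $\bar f \circ \pi = \tilde f$; in particular $\bar f(g_i) = h_i$, so $\bar f$ is precisely the well-defined extension of the candidate $f$.

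There is no real obstacle here: the argument is a direct application of the universal property of cokernels, and the only point worth making explicit is that $R$-linearity of $\tilde f$ lets one pass from vanishing on a generating set of $K$ to vanishing on all of $K$. I would write the proof as a short paragraph containing precisely the two implications above.
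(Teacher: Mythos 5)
Your proof is correct and follows essentially the same route as the paper: both directions reduce to the universal property of the quotient $M_1 \cong F_0/K$, with the converse using $R$-linearity to pass from vanishing on the chosen syzygy generators to vanishing on all of $K$. The only cosmetic difference is that you work with the composite map $F_0 \to M_2$ directly, whereas the paper lifts to a map between the free presentations $R^{n_1} \to R^{n_2}$ and checks that the image of $S_1$ lands in $S_2$; these are equivalent formulations of the same argument.
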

  \begin{proof}
   The direction $\Rightarrow$ is immediate. For the converse, note $M_1 \cong R^{n_1}/S_1$ and $M_2 \cong R^{n_2}/S_2$, where $n_1,~n_2$ are the cardinalities of the corresponding generating sets and,  $S_1$ and $S_2$ are the first syzygy modules of $M_1$ and $M_2$ respectively (with respect to the chosen generating sets).  
   The map $f$ is well-defined as a map between free modules i.e., $f: R^{n_1} \rightarrow R^{n_2}$, we need to show that it descends to a map on the corresponding quotients. For this, it suffices to show that the image of $f$ on $S_1$ is contained in $S_2$. 
   Since, $f$ takes a generating set of $S_1$ to $S_2$, it takes every element of $S_1$ to an element in $S_2$.
  \end{proof}
  
We  prove the well-definedness of the candidate maps $\psi_0,~\psi_1$ and $\phi_0,~\phi_1$ via Proposition \ref{relpre_prop} using the free presentation of the $G$-parking and toppling critical modules described in the previous subsections. We also use the following method to construct module maps.

\begin{proposition}  
   Suppose that $M_1$ and $M_2$ are given as co-kernels of maps between free modules $G_1 \xrightarrow{\ell_1} F_1$ and $G_2 \xrightarrow{\ell_2} F_2$ respectively, then the maps $G_1 \xrightarrow{\varrho_g} G_2$ and $F_1 \xrightarrow{\varrho_f} F_2$ between free modules specify a unique homomorphism $f: M_1 \rightarrow M_2$ if the diagram
\[ \begin{tikzcd}
G_1 \arrow{r}{\ell_1} \arrow[swap]{d}{\varrho_g} & F_1 \arrow{d}{\varrho_f} \\%
G_2\arrow{r}{\ell_2}& F_2
\end{tikzcd}
\]
commutes.
 \end{proposition}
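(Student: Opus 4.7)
The plan is to prove the standard cokernel-lifting lemma: any commutative square between two two-term free presentations induces an $R$-module map on the cokernels. Write $\pi_1: F_1 \twoheadrightarrow M_1$ and $\pi_2: F_2 \twoheadrightarrow M_2$ for the canonical quotients. I would define the candidate map $f: M_1 \to M_2$ by
\begin{equation*}
f(u + \operatorname{im}(\ell_1)) := \varrho_f(u) + \operatorname{im}(\ell_2),
\end{equation*}
or, equivalently, as the unique map characterized by the relation $f \circ \pi_1 = \pi_2 \circ \varrho_f$. The whole content of the proposition is that this prescription actually produces a well-defined $R$-linear map.

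The key step is to verify that $\varrho_f$ carries $\operatorname{im}(\ell_1)$ into $\operatorname{im}(\ell_2)$. Any element of $\operatorname{im}(\ell_1)$ has the form $\ell_1(g)$ for some $g \in G_1$; commutativity of the square gives $\varrho_f(\ell_1(g)) = \ell_2(\varrho_g(g)) \in \operatorname{im}(\ell_2)$, which is exactly what is needed. Consequently, $\varrho_f$ sends cosets modulo $\operatorname{im}(\ell_1)$ to cosets modulo $\operatorname{im}(\ell_2)$, so $f$ is well-defined on representatives. $R$-linearity of $f$ is then inherited from the $R$-linearity of $\varrho_f$ and of the two quotient maps.

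There is essentially no obstacle here: the statement is an instance of the universal property of the cokernel (equivalently, the fact that taking cokernels is functorial on morphisms of two-term chain complexes of free modules). The cautionary remark worth recording is that $f$ depends on the choice of lift $\varrho_f$—different lifts differing by a map that factors through $\ell_2$ induce the same $f$—so the proposition provides a convenient construction of module homomorphisms, not a bijection between commutative squares and homomorphisms $M_1 \to M_2$.

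The practical takeaway for the rest of the paper is the recipe it furnishes: to define one of the maps $\psi_i$ or $\phi_i$ between the critical modules, one specifies compatible $R$-linear lifts $\varrho_f$ on the free modules of the standard generators and $\varrho_g$ on the free modules of the standard syzygies, and then checks the commutativity of the square. Combined with Proposition \ref{relpre_prop}, this reduces the verification that $\psi_i$ and $\phi_i$ are well-defined to a finite combinatorial check on the standard syzygies coming from the free presentations in Subsections \ref{freepresgparkcan_sect} and \ref{freepres_subsect}.
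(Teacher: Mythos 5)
Your proposal is correct and follows essentially the same argument as the paper: define $f$ on cosets via $\varrho_f$ and use commutativity of the square to check that $\varrho_f$ carries ${\rm im}(\ell_1)$ into ${\rm im}(\ell_2)$, which is exactly the paper's proof. The extra remarks on the choice of lift and on the role of this lemma for $\psi_i,\phi_i$ are accurate but not needed for the verification.
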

 
 \begin{proof}
 It suffices to show that $\varrho_f$ takes every element in the image of $\ell_1$ to an element in the image of $\ell_2$.  Hence, for an element $b \in G_1$ consider $\varrho_f(\ell_1(b)) \in F_2$. Since the diagram commutes,  $\varrho_f(\ell_1(b))=\ell_2(\varrho_g(b))$. Hence, $\varrho_f(\ell_1(b))$ is in the image of $\ell_2$. 
 \end{proof}
 

\section{The $G$-Parking Tutte Short Exact Sequence}\label{gparktutteses_Sect}

In this section, we prove Theorem \ref{GparkTutte_theo} that states that the $G$-parking Tutte sequence is short exact. We starting by showing the well-definedness of the candidate maps $\psi_0$ and $\phi_0$. 

\subsection{Well-definedness of $\psi_0$}

For an acyclic orientation $\mathcal{B}$ on $G/(v_i,v_j)$ for some distinct $v_i$ and $v_j$, we denote by $\mathcal{B}_{(v_i,v_j)^{+}}$ and  $\mathcal{B}_{(v_i,v_j)^{-}}$,
the acyclic orientations on $G$ obtained by orienting every edge between $v_i$ and $v_j$ such that $v_i$ and $v_j$ is the source respectively. For an edge $\tilde{e}=(v_i,v_j)$ of $G$ and an acyclic orientation $\mathcal{B}$ on $G/\tilde{e}$, we also use the notations $\mathcal{B}_{\tilde{e}^{+}}$ and $\mathcal{B}_{\tilde{e}^{-}}$ for $\mathcal{B}_{(v_i,v_j)^{+}}$  and $\mathcal{B}_{(v_i,v_j)^{-}}$ respectively.  Recall that the map $\psi_0: {\rm CPark}_{G/e} \rightarrow {\rm CPark}_{G} \otimes_R R_e$ is defined as follows. Let $\mathcal{A}$ be the minimal generator of ${\rm CPark}_{G/e}$ corresponding to an acyclic orientation with a unique sink at $v_{1,2}$. We define $\psi_0(\mathcal{A})=x_{1,2}^{m_e-1} \mathcal{A}_{e^{+}}$, where  $\mathcal{A}_{e^{+}}$ is the minimal generator in  ${\rm CPark}_{G} \otimes_R R_e$ corresponding to the acyclic orientation on $G$ obtained by further orienting $e$ such that $v_1$ is the source. 

\begin{proposition} The map $\psi_0: {\rm CPark}_{G/e} \rightarrow {\rm CPark}_{G} \otimes_R R_e$ is well-defined.  \end{proposition}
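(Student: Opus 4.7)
The plan is to apply Proposition \ref{relpre_prop}: it suffices to verify that $\psi_0$ sends each standard syzygy of ${\rm GC}_{G/e}$ into the syzygy module of ${\rm GC}_G \otimes_R R_e$. Recall that the standard syzygies of ${\rm GC}_{G/e}$ described in Subsection \ref{freepresgparkcan_sect} are indexed by pairs $(\mathcal{A}, (G/e)/(v_k, v_l))$ where $\mathcal{A}$ is an acyclic orientation with unique sink at the partition containing $v_{1,2}$. I would split the analysis based on whether $v_{1,2} \in \{v_k, v_l\}$.

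In the first case ($v_k, v_l \neq v_{1,2}$), the partition graphs $(G/e)/(v_k, v_l)$ and $(G/(v_k, v_l))/e$ coincide, and $\mathcal{A}$ lifts canonically to an acyclic orientation $\widetilde{\mathcal{A}}$ on $G/(v_k, v_l)$ by orienting $e$ with source $v_1$. Since $v_k, v_l \neq v_1, v_2$, the sink conditions distinguishing the three syzygy types in Subsection \ref{freepresgparkcan_sect} transfer identically. A direct computation then shows the image under $\psi_0$ of the standard syzygy for $(\mathcal{A}, (G/e)/(v_k, v_l))$ equals $x_{1,2}^{m_e - 1}$ times the standard syzygy in ${\rm GC}_G$ for $(\widetilde{\mathcal{A}}, G/(v_k, v_l))$, which is a relation in ${\rm GC}_G$ and hence in ${\rm GC}_G \otimes_R R_e$.

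In the second case (without loss of generality $v_k = v_{1,2}$), let $a, b$ denote the number of edges in $G$ joining $v_l$ to $v_1$ and $v_2$ respectively, so $m' := a + b$ is the number of edges between $v_{1,2}$ and $v_l$ in $G/e$. Since $v_{1,2}$ is the sink but has outgoing edges to $v_l$ in $\mathcal{A}_{(v_{1,2}, v_l)^+}$, only the type-2 syzygy $x_l^{m'} \mathcal{A}_{(v_{1,2}, v_l)^-}$ applies, whose image under $\psi_0$ is $x_{1,2}^{m_e - 1} x_l^{a+b} (\mathcal{A}_{(v_{1,2}, v_l)^-})_{e^+}$. A key observation is that the unique-sink condition on $\mathcal{A}$ at the partition containing $v_{1,2}$ forces $v_l$ in $\mathcal{A}_{(v_{1,2}, v_l)^-}$ to have outgoing edges only to $v_{1,2}$, and hence in the lift $(\mathcal{A}_{(v_{1,2}, v_l)^-})_{e^+}$ on $G$, the outgoing edges of $v_l$ lie entirely in $\{v_1, v_2\}$. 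I would then chain two standard syzygies of ${\rm GC}_G$: first, contracting $(v_1, v_l)$ gives (type 3, assuming $a, b > 0$) the relation $x_1^a \mathcal{B}_{(v_1, v_l)^+} + x_l^a (\mathcal{A}_{(v_{1,2}, v_l)^-})_{e^+} = 0$ in ${\rm GC}_G$, where $\mathcal{B}_{(v_1, v_l)^+}$ is obtained by reversing all $v_1$--$v_l$ edges; second, contracting $(v_2, v_l)$ in $\mathcal{B}_{(v_1, v_l)^+}$ gives the type-2 syzygy $x_l^b \mathcal{B}_{(v_1, v_l)^+} = 0$. Multiplying the first by $x_l^b$ and reducing $x_1 \equiv x_{1,2}$ in $R_e$ then yields the vanishing of the image in ${\rm GC}_G \otimes_R R_e$. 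The subcases $a = 0$ or $b = 0$ reduce to a single type-2 syzygy directly.

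The main obstacle I anticipate is the second case, which requires careful type analysis of the intermediate syzygies in ${\rm GC}_G$ and hinges on the observation above constraining the outgoing edges of $v_l$ in the lift.
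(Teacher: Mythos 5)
Your proposal is correct, and its overall skeleton is the same as the paper's: reduce to checking standard syzygies via Proposition \ref{relpre_prop}, split according to whether the contracted pair of $G/e$ contains $v_{1,2}$, handle the first case by lifting the syzygy verbatim (up to the factor $x_{1,2}^{m_e-1}$), and handle the second case by expressing the image as an $R_e$-combination of two standard syzygies of ${\rm GC}_G \otimes_R R_e$. The genuine difference is in the hard subcase where both $v_1$ and $v_2$ are adjacent to $v_l$ (a triangle on $v_1,v_2,v_l$): the paper chains the type-three syzygy $x_{1,2}^{a}\cdot\mathcal{K}+x_l^{a}\cdot\mathcal{C}$ coming from contracting $(v_1,v_l)$ with the type-one syzygy $x_{1,2}^{m_e}\cdot\mathcal{K}$ coming from contracting $(v_1,v_2)$ (the edge $e$ itself), which forces a comparison of the exponents $a$ and $m_e$ and an extra multiplication by a power of $x_{1,2}$, and it only directly exhibits relations with the exponent $a$ (the multiplicity of $(v_1,v_l)$ in $G$) rather than $a+b$ (the multiplicity of $(v_{1,2},v_l)$ in $G/e$, which is the exponent actually occurring in the standard syzygy of ${\rm GC}_{G/e}$); your argument instead chains the same type-three syzygy with the type-two syzygy $x_l^{b}\cdot\mathcal{K}$ coming from contracting $(v_2,v_l)$, where your observation that the outgoing edges of $v_l$ in the lift lie entirely in $\{v_1,v_2\}$ guarantees that this is a legitimate standard syzygy. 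This choice eliminates $\mathcal{K}$ in one step, produces exactly $x_l^{a+b}\cdot\mathcal{C}$ (so the exponent bookkeeping matches the target syzygy on the nose, independently of $m_e$), and makes the $a=0$, $b=0$ degenerations transparent; what the paper's route buys instead is that it reuses the $(v_1,v_2)$-contraction syzygy $x_{1,2}^{m_e}\cdot\mathcal{K}$, the same relation that reappears in the computation of $\ker\psi_0$ and in the complex property, so the two auxiliary syzygy choices are equally valid but yours is tidier at this particular step.
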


\begin{proof}  By Proposition \ref{relpre_prop},  we verify that every standard syzygy of ${\rm CPark}_{G/e}$ is preserved by the map $\psi_0$. We label the vertices of $G/e$ by $u_2,u_3,\dots,u_n$, where $u_2:=v_{1,2}$ and for $i$ from $3,\dots,n$, the vertex $u_i$ in $G/e$ corresponds to the vertex $v_i$ in $G$.  Recall that each standard syzygy of ${\rm CPark}_{G/e}$ corresponds to a pair $(\mathcal{B},P_{i,j})$, where $\mathcal{B}$ is an acyclic orientation with a unique sink at the partition containing $v_{1,2}$ on the partition graph  $P_{i,j}$ obtained by contracting a pair of vertices $(u_i,u_j)$ of $G/e$ that are connected by an edge.  

We are led to the following cases: if neither  $u_i$ nor $u_j$ is $v_{1,2}$, then we claim that $\psi_0$ maps the standard syzygy of ${\rm CPark}_{G/e}$ corresponding to $(\mathcal{B},P_{i,j})$  to the standard syzygy  of ${\rm CPark}_{G} \otimes_R R_e$ corresponding to the pair $(\mathcal{B}_{e^{+}},G/(v_i,v_j))$. Note that $\mathcal{B}_{e^{+}}$ has a unique sink at $v_2$ and hence, $(\mathcal{B}_{e^{+}},G/(v_i,v_j))$  corresponds to a standard syzygy. 

Furthermore, note that the acyclic orientation $\mathcal{B}_{(u_i,u_j)^{+}}$ (and $\mathcal{B}_{(u_i,u_j)^{-}}$ respectively) on $G/e$ has a unique sink if and only if the acyclic orientation $(\mathcal{B}_{(v_i,v_j)^{+}})_{e^{+}}$ ($(\mathcal{B}_{(v_i,v_j)^{-}})_{e^{+}}$ respectively) on $G$ obtained by further orienting $e$ such that $v_1$ is the source also has a unique sink. Hence, the type of syzygy corresponding to $(\mathcal{B},P_{i,j})$  and  $(\mathcal{B}_{e^{+}},G/(v_i,v_j))$ among the three types described in Subsection \ref{freepresgparkcan_sect} is the same. Finally, we note that if  $\mathcal{B}_{(u_i,u_j)^{+}}$ has a unique sink (at $v_{1,2}$), then $\psi_0(\mathcal{B}_{(u_i,u_j)^{+}})=x_{1,2}^{m_e-1}\cdot(\mathcal{B}_{(v_i,v_j)^{+}})_{e^{+}}$. Similarly, if  $\mathcal{B}_{(u_i,u_j)^{-}}$ has a unique sink (at $v_{1,2}$), then $\psi_0(\mathcal{B}_{(u_i,u_j)^{-}})=x_{1,2}^{m_e-1}\cdot(\mathcal{B}_{(v_i,v_j)^{-}})_{e^{+}}$. Hence, this standard syzygy corresponding to $(\mathcal{B},\mathcal{P}_{i,j})$ is preserved by $\psi_0$. 

Consider the case where one of $u_i$ or $u_j$, $u_i$ say is $v_{1,2}$.  Suppose that only $v_2$ and not $v_1$ is adjacent to $v_j$ in $G$ then consider the standard syzygy of  ${\rm CPark}_{G} \otimes_R R_e$ corresponding to $(\mathcal{B}_{e^{+}},G/(v_2,v_j))$
and note that both these standard syzygies corresponding to $(\mathcal{B},\mathcal{P}_{i,j})$ and $(\mathcal{B}_{e^{+}},G/(v_2,v_j))$ are of the second type and that $\psi_0(\mathcal{B}_{(u_2,u_j)^{-}})=x_{1,2}^{m_e-1}\cdot(\mathcal{B}_{(v_2,v_j)^{-}})_{e^{+}}$. Hence, this standard syzygy corresponding to $(\mathcal{B},\mathcal{P}_{i,j})$ is preserved by $\psi_0$. 

Suppose that  among $v_1$ and $v_2$, precisely $v_1$, or both $v_1$ and $v_2$ are adjacent to $v_j$.  Consider the standard syzygy of  ${\rm CPark}_{G} \otimes_R R_e$ corresponding to $(\mathcal{B}_{e^{+}},G/(v_1,v_j))$. If this syzygy is of the first two types, then it must be of the second type and then the syzygy  $(\mathcal{B},P_{i,j})$ must also be of the second type (since only possibly $v_j$ among $v_1$ and $v_j$ can be a sink of $\mathcal{B}_{e^{+}}$).  The syzygies are $x_j^m \cdot \mathcal{B}_{(u_1,u_j)^{-}}$ and $x_j^m \cdot (\mathcal{B}_{(v_1,v_j)^{-}})_{e^{+}}$, where $m$ is the multiplicity of the edge $(v_1,v_j)$. Note that $\psi_0(\mathcal{B}_{(u_1,u_j)^{-}})=x_{1,2}^{m_e-1}\cdot(\mathcal{B}_{(v_1,v_j)^{-}})_{e^{+}}$ and hence, this is preserved.  

Otherwise,  this syzygy is of the third type and it is of the form  $x_{1,2}^{m} \cdot \mathcal{K} +x_j^{m}  \cdot (\mathcal{B}_{e^{+}})_{(v_1,v_j)^{-}}$, where $\mathcal{K}=(\mathcal{B}_{e^{+}})_{(v_1,v_j)^{+}}$ is the acyclic orientation on $G$ obtained from $ (\mathcal{B}_{e^{+}})_{(v_1,v_j)^{-}}$ by reversing the orientation of every edge between the vertices $(v_1,v_j)$. We consider the syzygy corresponding to the acyclic orientation induced by  $\mathcal{K}$ on $G/(v_1,v_2)$ (note that the edge $e$ is contractible on $\mathcal{K}$).  Since $v_2$ is a sink this syzygy is of the form $x_{1,2}^{m_e} \cdot \mathcal{K}$, where $m_e$ is the multiplicity of the edge $e$.  Hence, if $m \geq m_e$ we obtain the syzygy $x_j^{m} \cdot  (\mathcal{B}_{e^{+}})_{(v_1,v_j)^{-}}$ from the standard syzygies as $(x_{1,2}^{m} \cdot \mathcal{K}+x_j^m \cdot  (\mathcal{B}_{e^{+}})_{(v_1,v_j)^{-}})-x_{1,2}^{m-m_e}(x_{1,2}^{m_e} \cdot \mathcal{K})$. Otherwise, we obtain the syzygy $x_{1,2}^{m_e-m}(x_j^m \cdot  (\mathcal{B}_{e^{+}})_{(v_1,v_j)^{-}})$ as $x_{1,2}^{m_e-m}(x_{1,2}^{m} \cdot \mathcal{K}+x_j^m \cdot  (\mathcal{B}_{e^{+}})_{(v_1,v_j)^{-}})-(x_{1,2}^{m_e} \cdot \mathcal{K})$.  Hence, $x_{1,2}^{m_e-1}x_j^m \cdot  (\mathcal{B}_{e^{+}})_{(v_1,v_j)^{-}}$ is a syzygy of ${\rm CPark}_G \otimes_R R_e$. Finally, note that $x_{1,2}^{m_e-1}x_j^m \cdot  (\mathcal{B}_{e^{+}})_{(v_1,v_j)^{-}}=\psi_0(x_j^m \cdot  \mathcal{B}_{(v_{1,2},u_j)^{-}})$ and that $x_j^m \cdot  \mathcal{B}_{(v_{1,2},u_j)^{-}}$ is the standard syzygy corresponding to the pair $(\mathcal{B},P_{i,j})$ to complete the proof. 
 \end{proof}

 \subsection{Well-definedness of $\phi_0$}

We need the following combinatorial lemma for the well-definedness of $\phi_0$. Recall that an edge $e$ of $G$ is said to be contractible on an acyclic orientation  $\mathcal{A'}$ on $G$ if the orientation $\mathcal{A'}/e$ induced by $\mathcal{A'}$ on $G/e$ is acyclic.
 
 \begin{lemma}\label{notcontra_lem}
Suppose that $\mathcal{A}$ is an acyclic orientation on $G$ with a unique sink at $v_2$. Suppose that there is a directed edge from $v_1$ to $v_j \neq v_2$, then the edge $e=(v_1,v_2)$ is not contractible. 
\end{lemma}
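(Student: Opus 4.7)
The plan is to exhibit an explicit directed cycle in the contracted orientation $\mathcal{A}/e$, which by definition witnesses that $e$ is not contractible. The construction of the cycle will proceed by concatenating the given directed edge $v_1 \to v_j$ with a directed path from $v_j$ back to $v_2$, then identifying $v_1$ and $v_2$ in $G/e$.

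First I would record the standard fact that in any acyclic orientation of a connected graph with a unique sink $s$, every vertex has a directed path to $s$. The quick argument: given a vertex $u$, let $S$ be the set of vertices reachable from $u$ by directed paths. Since $\mathcal{A}$ is acyclic, the orientation restricted to the subgraph induced on $S$ has a sink $w \in S$; but all out-edges of $w$ in $G$ go to vertices reachable from $u$, i.e.\ into $S$, so $w$ is in fact a sink of $\mathcal{A}$ on $G$. By uniqueness $w = s$, so $s$ is reachable from $u$.

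Applying this fact with $s = v_2$ to the vertex $v_j$ gives a directed path $P: v_j = u_0 \to u_1 \to \cdots \to u_r = v_2$ in $\mathcal{A}$. Prepending the hypothesised directed edge $v_1 \to v_j$ yields a directed walk $W: v_1 \to v_j \to u_1 \to \cdots \to v_2$ in $\mathcal{A}$. Since $\mathcal{A}$ is acyclic, $W$ visits no vertex twice; in particular $v_1$ appears only at the start, so none of the intermediate edges of $W$ is the edge $e=(v_1,v_2)$ (which is the only edge of $G$ touching both $v_1$ and $v_2$ other than its parallel copies, and those also require $v_1$ as an endpoint). Consequently every edge of $W$ survives in $G/e$ with its orientation intact.

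Finally I would observe that, under the contraction identifying $v_1$ and $v_2$ to $v_{1,2}$, the walk $W$ becomes a closed directed walk $v_{1,2} \to v_j \to u_1 \to \cdots \to v_{1,2}$ in $G/e$ using the orientation $\mathcal{A}/e$. Any closed directed walk contains a directed cycle, so $\mathcal{A}/e$ is not acyclic, proving that $e$ is not contractible on $\mathcal{A}$. The only mildly delicate point is confirming that $W$ genuinely avoids $e$ and its parallel edges, but this is immediate from the acyclicity of $\mathcal{A}$ together with $v_j \neq v_2$; I do not anticipate any real obstacle.
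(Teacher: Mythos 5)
Your proposal is correct and follows essentially the same route as the paper: build a directed path from $v_j$ to the unique sink $v_2$ using acyclicity, prepend the edge $v_1 \to v_j$, and observe that this $v_1$-to-$v_2$ path avoiding $e$ yields a directed cycle after contraction. Your justification of reachability of the sink (via the reachable set) and the explicit closed-walk argument in $G/e$ are only minor elaborations of the paper's argument.
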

\begin{proof}
In order to show that $e$ is not contractible, we need to exhibit a directed path from $v_1$ to $v_2$ that is not equal to the edge $e$. Construct a directed walk starting from $v_j$ by picking arbitrary outgoing edges. This walk cannot repeat vertices since  $\mathcal{A}$ is acyclic and  hence, it will terminate since $G$ is a finite graph. Furthermore, it terminates in $v_2$ since it is the unique sink. Appending the directed edge $(v_1,v_j)$ to the beginning of this walk yields the required directed path from $v_1$ to $v_2$.  
\end{proof}

Recall that we defined the map $\phi_0: {\rm CPark}_{G} \otimes_R R_e \rightarrow {\rm CPark}_{G \setminus e} \otimes_R R_e$ as the following. If $m_e=1$, then

 
\[
\phi_0(\mathcal{A'})=
\begin{cases}
\mathcal{A'}\setminus e,\text{ if the edge $e$ is not contractible on $\mathcal{A'}$},\\
0, \text{ otherwise.}
\end{cases}
\]
If $m_e>1$, then $\phi_0(\mathcal{A'})=\mathcal{A'} \setminus e$ for every standard generator $\mathcal{A}'$ of ${\rm CPark}_{G} \otimes_R R_e$.
 
 \begin{proposition} The map $\phi_0$ is well-defined. \end{proposition}

\begin{proof} By Proposition \ref{relpre_prop},  we verify that every standard syzygy of ${\rm CPark}_{G} \otimes_R R_e$ is preserved by the map $\phi_0$. We start by noting that any standard syzygy corresponding to the partition graph $\mathcal{P}_{1,2}$ of $G$ obtained by contracting $(v_1,v_2)$ is of the form $x_{1,2}^{m_e} \cdot \mathcal{A}_{e^{+}}$, where $\mathcal{A}$ is an acyclic orientation on $G/e$. If $m_e=1$, then $\phi_0$ maps it to zero.  Since $e$ is contractible on $\mathcal{A}_{e^{+}}$ and hence, $\phi_0(\mathcal{A}_{e^{+}})=0$. If $m_e>1$, then  $\phi_0(x_{1,2}^{m_e} \cdot \mathcal{A}_{e^{+}})=x_{1,2}^{m_e} \cdot (\mathcal{A}_{e^{+}} \setminus e)=0$ since $x_{1,2}^{m_e-1} \cdot (\mathcal{A}_{e^{+}} \setminus e)$ is the standard syzygy of ${\rm CPark}_{G \setminus e} \otimes_R R_e$ corresponding to the pair $(\mathcal{A},G/(v_1,v_2))$.

We now consider standard syzygies corresponding to other partition graphs.  If one of the vertices is $v_2$ and the other vertex is $v_j \neq v_1$, then the standard syzygy corresponding to $(\mathcal{A},\mathcal{P}_{2,j})$ is of type two  and is of the form $x_j^m \cdot \mathcal{A}_{(v_2,v_j)^{-}}$. The  map $\phi_0$ takes it to the standard syzygy corresponding to $(\mathcal{A} \setminus e,\mathcal{P}_{2,j} \setminus e)$.

If one of the vertices is $v_1$ and the other vertex is $v_j \neq v_2$, then if the standard syzygy $\mathcal{S}$ is of type three and is of form $x_1^{m} \cdot \mathcal{A}_{(v_1,v_j)^{+}}+x_j^m \cdot \mathcal{A}_{(v_1,v_j)^{-}}$. We consider two cases:

{\bf Case I:}  Suppose that $m_e =1$.  Note that by Lemma \ref{notcontra_lem}, the edge $e$ is not contractible on $\mathcal{A}_{(v_1,v_j)^{+}}$.  We have the following two cases: If $e$ is not contractible on  $\mathcal{A}_{(v_1,v_j)^{-}}$, then we have $\phi_0(\mathcal{A}_{(v_1,v_j)^{-}})=\mathcal{A}_{(v_1,v_j)^{-}}\setminus e$ and $\phi_0(\mathcal{A}_{(v_1,v_j)^{+}})=\mathcal{A}_{(v_1,v_j)^{+}}\setminus e$.  Furthermore, the standard syzygy corresponding to the acyclic orientation  $(\mathcal{A}/(v_1,v_j)) \setminus e$ on $(G \setminus e)/(v_1,v_j)$ (this is the acyclic orientation induced by $\mathcal{A}$ on the graph obtained by contracting $(v_1,v_j)$ and deleting $e$) is a type three syzygy since in both $\mathcal{A}_{(v_1,v_j)^{-}} \setminus e$ and $\mathcal{A}_{(v_1,v_j)^{+}} \setminus e$, the vertex $v_1$ has at least one outgoing edge and is hence not a sink.  This implies that both acyclic orientations have a unique sink at $v_2$.  This syzygy $\mathcal{S}'$ is of the form: $x_1^{m} \cdot \mathcal{A}_{(v_1,v_j)^{+}} \setminus e+x_j^m \cdot \mathcal{A}_{(v_1,v_j)^{-}} \setminus e$. Hence, $\phi_0$ takes $\mathcal{S}$ to $\mathcal{S}'$.  If $e$ is contractible on $\mathcal{A}_{(v_1,v_j)^{-}}$, then $\phi_0(\mathcal{A}_{(v_1,v_j)^{-}})=0$ and $\phi_0(\mathcal{A}_{(v_1,v_j)^{+}})=\mathcal{A}_{(v_1,v_j)^{+}}\setminus e$. Furthermore, the standard syzygy $\mathcal{S}'$ corresponding to the acyclic orientation $(\mathcal{A}/(v_1,v_j)) \setminus e$ is of type one and of the form $x_1^m \cdot  \mathcal{A}_{(v_1,v_j)^{+}} \setminus e$. Hence, $\phi_0$ takes $\mathcal{S}$ to $\mathcal{S}'$. 


{\bf Case II:}   If $m_e>1$, then $\phi_0(\mathcal{A}_{(v_1,v_j)^{-}})=\mathcal{A}_{(v_1,v_j)^{-}}\setminus e$ and $\phi_0(\mathcal{A}_{(v_1,v_j)^{+}})=\mathcal{A}_{(v_1,v_j)^{+}}\setminus e$. Also, this standard syzygy maps to the standard syzygy of ${\rm CPark}_{G \setminus e} \otimes_R R_e$ corresponding to $(\mathcal{A},\mathcal{P}_{i,j})$. Suppose that this standard syzygy of ${\rm CPark}_{G} \otimes_R R_e$ is of type two, then it is of the form $x_1^{m} \cdot \mathcal{A}_{(v_1,v_j)^{-}}$ and $\phi_0$ (irrespective of $m_e$) maps it to the standard syzygy $x_1^{m} \cdot \mathcal{A}_{(v_1,v_j)^{-}} \setminus e$ corresponding to the acyclic orientation $(\mathcal{A}/(v_1,v_j)) \setminus e$ on $G \setminus e$, note that this is also a syzygy of type two.

 If none of the vertices is $v_1$ or $v_2$, then the standard syzygy corresponding to $(\mathcal{A},\mathcal{P}_{i,j})$ is mapped to the standard syzygy corresponding to $\mathcal{A} \setminus e$ on $\mathcal{P}_{i,j} \setminus e$ independent of $m_e$.  Note that the type of these two standard syzygies are the same and $\phi_0$ maps $\mathcal{A}_{(v_i,v_j)^{+}}$ to $(\mathcal{A}_{(v_i,v_j)^{+}}) \setminus e$ and maps $\mathcal{A}_{(v_i,v_j)^{-}}$ to $(\mathcal{A}_{(v_i,v_j)^{-}}) \setminus e$ (when they are well-defined). \end{proof}

\subsection{Complex Property}

\begin{proposition} The $G$-parking Tutte sequence in Theorem \ref{GparkTutte_theo} is a complex of graded $R_e$-modules. \end{proposition}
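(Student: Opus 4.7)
The plan is to verify the only non-trivial part of the complex property, namely that $\phi_0 \circ \psi_0 = 0$; the two compositions involving a zero map are automatic. It suffices to check $\phi_0(\psi_0(\mathcal{A})) = 0$ on each standard generator $\mathcal{A}$ of ${\rm GC}_{G/e}$, i.e., on each acyclic orientation on $G/e$ with unique sink at $v_{1,2}$. By definition $\psi_0(\mathcal{A}) = x_{1,2}^{m_e - 1} \mathcal{A}_{e^{+}}$, where $\mathcal{A}_{e^{+}}$ is the acyclic orientation on $G$ obtained by orienting $e$ with $v_1$ as source (and in the multi-edge case orienting the parallel copies the same way), so that $\mathcal{A}_{e^{+}}$ has a unique sink at $v_2$. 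The argument then naturally splits into the two cases on which $\phi_0$ was defined.

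If $m_e = 1$, then $\psi_0(\mathcal{A}) = \mathcal{A}_{e^{+}}$, and the orientation induced on $G/e$ by contracting $e$ is exactly $\mathcal{A}$, which is acyclic. Hence $e$ is contractible on $\mathcal{A}_{e^{+}}$, and by the $m_e = 1$ branch of the definition of $\phi_0$ we have $\phi_0(\mathcal{A}_{e^{+}}) = 0$.

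If $m_e > 1$, then $\phi_0(\psi_0(\mathcal{A})) = x_{1,2}^{m_e-1}(\mathcal{A}_{e^{+}} \setminus e)$, and the key step is to realise this element as a standard syzygy of ${\rm GC}_{G \setminus e} \otimes_R R_e$. Let $\tilde{\mathcal{A}}$ be the acyclic orientation on the partition graph $(G \setminus e)/(v_1, v_2)$ induced by $\mathcal{A}_{e^{+}} \setminus e$. Since contracting the $m_e-1$ remaining parallel edges yields the same underlying graph as $G/e$ modulo the $m_e-1$ loops at $v_{1,2}$, the orientation $\tilde{\mathcal{A}}$ coincides with $\mathcal{A}$ on the non-loop edges and so still has a unique sink at $v_{1,2}$. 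The multiplicity of $(v_1, v_2)$ in $G \setminus e$ is $m_e - 1$, and since the contracted partition contains the sink $v_2$, the associated standard syzygy is of type one in the sense of Subsection \ref{freepresgparkcan_sect}, reading $x_1^{m_e-1}\,\tilde{\mathcal{A}}_{(v_1, v_2)^{+}} = 0$ in ${\rm GC}_{G \setminus e}$. As $\tilde{\mathcal{A}}_{(v_1, v_2)^{+}} = \mathcal{A}_{e^{+}} \setminus e$ (both orient all $m_e-1$ remaining parallel edges from $v_1$ to $v_2$ and agree on every other edge), tensoring with $R_e$ sends $x_1$ to $x_{1,2}$ and produces $x_{1,2}^{m_e-1}(\mathcal{A}_{e^{+}} \setminus e) = 0$ in ${\rm GC}_{G \setminus e} \otimes_R R_e$, as required.

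The main obstacle is the multi-edge case: one must recognise the output of $\phi_0 \circ \psi_0$ as precisely the type-one standard syzygy associated to the pair $(\tilde{\mathcal{A}}, (G \setminus e)/(v_1, v_2))$, which involves lining up the multiplicity $m_e-1$, the lift $\tilde{\mathcal{A}}_{(v_1, v_2)^{+}} = \mathcal{A}_{e^{+}} \setminus e$, and the sink data with the free presentation recalled earlier. The $m_e = 1$ case is an immediate consequence of the contractibility of $e$ on $\mathcal{A}_{e^{+}}$.
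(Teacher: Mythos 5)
Your proposal is correct and follows essentially the same route as the paper: the complex property is reduced to checking $\phi_0\circ\psi_0=0$ on the standard generators, with the $m_e=1$ case handled by the contractibility of $e$ on $\mathcal{A}_{e^{+}}$ and the $m_e>1$ case by recognising $x_{1,2}^{m_e-1}\cdot(\mathcal{A}_{e^{+}}\setminus e)$ as the standard syzygy of ${\rm GC}_{G\setminus e}\otimes_R R_e$ attached to the contraction of $(v_1,v_2)$. Your additional bookkeeping in the multi-edge case (type-one syzygy, multiplicity $m_e-1$, the identification $\tilde{\mathcal{A}}_{(v_1,v_2)^{+}}=\mathcal{A}_{e^{+}}\setminus e$) simply makes explicit what the paper asserts.
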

\begin{proof}
We show that the property of a complex is satisfied at each homological degree. At homological degrees zero and two, this is immediate. At homological degree one, we need to show that $\phi_0(\psi_0(b))=0$ for every $b \in {\rm CPark}_{G/e}$. It suffices to prove this for every standard generator $\mathcal{A}$ of  ${\rm CPark}_{G/e}$. To see this,  consider the case where $m_e=1$, we have $\psi_0(\mathcal{A})=\mathcal{A}_{e^{+}}$ and $\phi_0(\mathcal{A}_{e^{+}})=0$ since the edge $e$ is contractible on the acyclic orientation $\mathcal{A}_{e^{+}}$ on $G$. If $m_e>1$, then $\psi_0(\mathcal{A})=x_{1,2}^{m_e-1} \cdot \mathcal{A}_{e^{+}}$ and $\phi_0(\mathcal{A}_{e^{+}})=x_{1,2}^{m_e-1} \cdot (\mathcal{A}_{e^{+}} \setminus e)=0$ since  $x_{1,2}^{m_e-1} \cdot (\mathcal{A}_{e^{+}} \setminus e)$ is the standard syzygy (of ${\rm CPark}_{G \setminus e} \otimes_R R_e$) corresponding to the acyclic orientation $\mathcal{A}$ on $G/(v_1,v_2)$. 
\end{proof}

\subsection{The Kernel of $\psi_0$}

\begin{proposition}\label{kerpsi_prop} The kernel of $\psi_0: {\rm CPark}_{G/e} \rightarrow {\rm CPark}_{G} \otimes_R R_e$ is equal to $x_{1,2} \cdot {\rm CPark}_{G/e}$. \end{proposition}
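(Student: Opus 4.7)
The plan splits the claim into two containments. The inclusion $x_{1,2}\cdot{\rm GC}_{G/e}\subseteq\ker(\psi_0)$ is a direct calculation: for any standard generator $\mathcal{A}$ of ${\rm GC}_{G/e}$, we have $\psi_0(x_{1,2}\cdot\mathcal{A})=x_{1,2}^{m_e}\cdot\mathcal{A}_{e^{+}}$, and this expression is precisely the type-one standard syzygy of ${\rm GC}_G\otimes_R R_e$ attached to the pair $(\mathcal{A},G/(v_1,v_2))$ (the $j=2$ branch of Subsection \ref{freepresgparkcan_sect}), hence vanishes in ${\rm GC}_G\otimes_R R_e$.

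For the reverse inclusion $\ker(\psi_0)\subseteq x_{1,2}\cdot{\rm GC}_{G/e}$, I would pass to the free presentations. Let $F_1,F_2$ be the free $R_e$-modules on the standard generating sets, $S_1\subseteq F_1$ and $S_2\subseteq F_2$ the standard-syzygy submodules, and $\tilde{\psi}_0:F_1\to F_2$ the lift $e_{\mathcal{A}}\mapsto x_{1,2}^{m_e-1}e_{\mathcal{A}_{e^{+}}}$. It suffices to show $\tilde{\psi}_0^{-1}(S_2)\subseteq S_1+x_{1,2}F_1$. Given $\tilde{\alpha}\in\tilde{\psi}_0^{-1}(S_2)$, I expand $\tilde{\psi}_0(\tilde{\alpha})=\sum_\sigma r_\sigma\sigma$ as an $R_e$-combination of standard syzygies of $F_2$, partitioned according to the contracted partition graph $G/(v_i,v_j)$. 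The syzygies with $(v_i,v_j)=(v_1,v_2)$ are precisely $x_{1,2}^{m_e}e_{\mathcal{B}_{e^{+}}}=x_{1,2}\cdot\tilde{\psi}_0(e_{\mathcal{B}})$ and contribute directly to the $x_{1,2}F_1$ summand; the rest must be matched with the $\tilde{\psi}_0$-image of standard syzygies of $F_1$ up to a factor of $x_{1,2}^{m_e-1}$, via the case-by-case correspondence already set up in the well-definedness proof of $\psi_0$.

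To make this matching precise, I would decompose $F_2=L\oplus L'$, where $L$ is the $R_e$-span of $\{e_{\mathcal{A}_{e^{+}}}:\mathcal{A}\text{ a standard generator of }{\rm GC}_{G/e}\}$ and $L'$ is the span of the remaining basis vectors. By Lemma \ref{notcontra_lem}, $L$ coincides with the span of $e_{\mathcal{B}}$ over those standard generators $\mathcal{B}$ of ${\rm GC}_G\otimes_R R_e$ on which the edge $e$ is contractible. Since $\tilde{\psi}_0(F_1)\subseteq L$, the $L'$-projection of $\sum_\sigma r_\sigma\sigma$ vanishes and constrains the $r_\sigma$. Standard syzygies coming from $G/(v_i,v_j)$ with $v_1\notin\{v_i,v_j\}$ sit entirely inside $L$ or entirely inside $L'$, since contracting away from $v_1$ does not touch $v_1$'s outgoing edges; they pair directly with standard syzygies of $F_1$. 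Combining the $L'$-constraints with the $L$-projection and cancelling the common factor $x_{1,2}^{m_e-1}$ --- valid because $\tilde{\psi}_0$ is injective at the level of free modules, as it sends distinct basis vectors of $F_1$ to $x_{1,2}^{m_e-1}$ times distinct basis vectors of $L$ and $x_{1,2}^{m_e-1}$ is a nonzerodivisor in $R_e$ --- yields $\tilde{\alpha}\in S_1+x_{1,2}F_1$.

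The main obstacle is the type-three standard syzygies $x_1^{m_{1,j}}\mathcal{B}_{(v_1,v_j)^{+}}+x_j^{m_{1,j}}\mathcal{B}_{(v_1,v_j)^{-}}$ arising from contractions $G/(v_1,v_j)$ with $v_j\ne v_2$: the first summand always lies in $L'$ (since $v_1$ has an outgoing edge to $v_j\ne v_2$, making $e$ non-contractible by Lemma \ref{notcontra_lem}), while the second may straddle $L$ and $L'$. The trick already deployed in the well-definedness proof of $\psi_0$ for this very case --- combining the syzygy with a suitable multiple of the type-one $(v_1,v_2)$-syzygy attached to the $G/e$-induced orientation of $\mathcal{B}_{(v_1,v_j)^{+}}$ to kill the $L'$-contribution up to an extra factor of $x_{1,2}$ --- powers the bookkeeping, and the extra $x_{1,2}$ is absorbed precisely into the $x_{1,2}F_1$ summand of the final identification.
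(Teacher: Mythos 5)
Your opening moves are fine and match the paper's strategy: the easy inclusion via the $(v_1,v_2)$-syzygy, the reduction to showing $\tilde{\psi}_0^{-1}(S_2)\subseteq S_1+x_{1,2}F_1$, and the skeleton of expanding $\tilde{\psi}_0(\tilde{\alpha})$ in standard syzygies, peeling off the $(v_1,v_2)$-ones (which are $x_{1,2}\tilde{\psi}_0(e_{\mathcal{B}})$) and matching the rest against syzygies of ${\rm GC}_{G/e}$ are exactly what the paper does (the paper performs the clearing after multiplying the identity (\ref{syz_eqn}) by $x_{1,2}^{m_e-1}$). The gap is in the step you yourself flag as the main obstacle. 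The ``type-one $(v_1,v_2)$-syzygy attached to the $G/e$-induced orientation of $\mathcal{B}_{(v_1,v_j)^{+}}$'' does not exist: by Lemma \ref{notcontra_lem} --- which you invoke in the same sentence to place $e_{\mathcal{B}_{(v_1,v_j)^{+}}}$ in $L'$ --- the edge $e$ is not contractible on $\mathcal{B}_{(v_1,v_j)^{+}}$, so this orientation induces no acyclic orientation on $G/e$, whereas the $(v_1,v_2)$-syzygies are precisely the elements $x_{1,2}^{m_e}e_{\mathcal{C}_{e^{+}}}$ with $\mathcal{C}$ acyclic on $G/e$ with unique sink at $v_{1,2}$. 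Moreover, even if such a syzygy existed it is supported entirely on $L$-basis vectors, so in your own decomposition it cannot cancel the $L'$-term $x_{1,2}^{m_{1,j}}e_{\mathcal{B}_{(v_1,v_j)^{+}}}$: the $L'$-components of straddling syzygies can only cancel against other standard syzygies having $L'$-support (other $(v_1,v_k)$-syzygies, $(v_2,v_j)$-syzygies, or syzygies from pairs away from $v_1$ lying wholly in $L'$).

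The triangle already shows what the correct mechanism looks like and why your bookkeeping, as written, fails: in Example \ref{triangle_ex} the straddling $(v_1,v_3)$-syzygy $x_3e_{\mathcal{A}_{e^{+}}}+x_{1,2}e_{\mathcal{A}_{e^{-}}}$ must be combined with the $(v_2,v_3)$-syzygy $x_3e_{\mathcal{A}_{e^{-}}}$, yielding $x_3^2e_{\mathcal{A}_{e^{+}}}=x_3\bigl(x_3e_{\mathcal{A}_{e^{+}}}+x_{1,2}e_{\mathcal{A}_{e^{-}}}\bigr)-x_{1,2}\bigl(x_3e_{\mathcal{A}_{e^{-}}}\bigr)$; no $(v_1,v_2)$-syzygy is involved, and the resulting $L$-side leftover is absorbed into $S_1$ (it is $\tilde{\psi}_0(x_3^2e_{\mathcal{A}})$), not into the $x_{1,2}F_1$ summand as your final sentence asserts. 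Thus the heart of the proposition --- showing that whenever the vanishing of the $L'$-projection forces such combinations, the induced $L$-side contributions always land in $\tilde{\psi}_0(S_1)$ modulo the $(v_1,v_2)$-syzygies --- is not established by your sketch. A smaller but related inaccuracy: syzygies from pairs $(v_i,v_2)$ with $v_i$ also adjacent to $v_1$ do not ``pair directly'' with syzygies of $F_1$ either, since the corresponding syzygy of ${\rm GC}_{G/e}$ carries the exponent $m_{i,1}+m_{i,2}$ rather than $m_{i,2}$, so again only combinations of standard syzygies match. To complete your argument you need an explicit analysis of these combinations (this is what the paper's clearing step is doing), not an appeal to a $(v_1,v_2)$-syzygy that is unavailable in exactly the case at hand.
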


\begin{proof}
The inclusion $x_{1,2} \cdot {\rm CPark}_{G/e}$ in the kernel of $\psi_0$ is immediate since $\psi_0(x_{1,2} \cdot \mathcal{A})=x_{1,2}^{m_e} \cdot {\mathcal{A}}_{e^{+}}=0$ since $x_{1,2}^{m_e}  \cdot {\mathcal{A}}_{e^{+}}$ is the standard syzygy corresponding to the acyclic orientation $\mathcal{A}$ of the partition graph $G/(v_1,v_2)$.  

For the other direction, consider an element $\alpha=\sum_{\mathcal{A}} p_{\mathcal{A}} \cdot \mathcal{A}$ in the kernel of $\psi_0$.  We show that  the coefficients $p_{\mathcal{A}}$ can be chosen such that $x_{1,2}|p_{\mathcal{A}}$ for each $\mathcal{A}$.
Since $\alpha \in {\rm ker}(\psi_0)$, we obtain $x_{1,2}^{m_e-1}\sum_{\mathcal{A}}p_{\mathcal{A}} \cdot \mathcal{A}_{e^{+}}=0$. Furthermore, since the map $\mathcal{A} \rightarrow \mathcal{A}_{e^{+}}$ regarded as a map between sets of acyclic orientations (with a unique sink at a fixed vertex) is injective, we note that $x_{1,2}^{m_e-1}\sum_{\mathcal{A}}p_{\mathcal{A}} \cdot \mathcal{A}_{e^{+}}$ is a syzygy of ${\rm CPark}_G \otimes_{R} R_e$. 

Hence, it can be written as an $R_e$-linear combination of the standard syzygyies of ${\rm CPark}_G$. Hence, 

\begin{equation}\label{syz_eqn} x_{1,2}^{m_e-1}\sum_{\mathcal{A}}p_{\mathcal{A}} \cdot \mathcal{A}_{e^{+}}=\sum_{(\mathcal{B},\mathcal{P}_{i,j})}r_{(\mathcal{B},\mathcal{P}_{i,j})}s_{(\mathcal{B},\mathcal{P}_{i,j})} \end{equation}
 , where $s_{(\mathcal{B},\mathcal{P}_{i,j})}$ is the standard syzygy corresponding to the acyclic orientation $\mathcal{B}$ (with a unique sink at the partition containing $v_2$) on the partition graph $\mathcal{P}_{i,j}$ and $r_{(\mathcal{B},\mathcal{P}_{i,j})} \in R_e$. Note that Equation (\ref{syz_eqn}) is an equation in the free $R_e$-module of rank equal to the number of acyclic orientations with a unique sink at $v_2$.
 
Consider the case $m_e=1$.  Next, we observe that if $(i,j)$ (as an unordered pair) is not $(1,2)$, then the syzygy $s_{(\mathcal{B},\mathcal{P}_{i,j})}$  is the image of a standard syzygy of ${\rm CPark}_{G/e}$, namely the standard syzygy corresponding to the acyclic orientation $\mathcal{B}/e$ on $(G/(v_i,v_j))/e$ obtained by contracting $e$.  These syzygies can be cleared out by regarding $\alpha$ as $\alpha-\sum_{(i,j) \neq (1,2)}r_{(\mathcal{B},\mathcal{P}_{i,j})}s_{\mathcal{B}/e,G/(v_i,v_j))/e}$ and using the expansion in Equation (\ref{syz_eqn}) for $\psi_0(\alpha)$.  Hence, we can assume that the standard syzygies in Equation (\ref{syz_eqn}) all correspond to $\mathcal{P}_{1,2}$. A standard syzygy corresponding to $\mathcal{P}_{1,2}$ is of the form $x_{1,2} \cdot \mathcal{A}_{e^{+}}$ for some acyclic orientation $\mathcal{A}$ on $G/(v_1,v_2)$. This implies that each coefficient $p_{\mathcal{A}}$ divides $x_{1,2}$. This completes the proof for $m_e=1$.

More generally, if $m_e \geq 1$, then we multiply both sides of the Equation (\ref{syz_eqn}) by $x_{1,2}^{m_e-1}$.  The argument then proceeds similar to the case $m_e=1$. If $(i,j)$ (as an unordered pair) is not $(1,2)$, then  $x_{1,2}^{m_e-1} \cdot s_{(\mathcal{B},\mathcal{P}_{i,j})}$  is the image of a standard syzygy of ${\rm CPark}_{G/e}$, namely the standard syzygy corresponding to the acyclic orientation $\mathcal{B}/e$ on $(G/(v_i,v_j))/e$ obtained by contracting $e$.  Hence, these syzygies can be cleared out and we can assume that only terms corresponding to $\mathcal{P}_{1,2}$ appear. A standard syzygy corresponding to $\mathcal{P}_{1,2}$ is of the form $x_{1,2}^{m_e} \cdot \mathcal{A}_{e^{+}}$ for some acyclic orientation $\mathcal{A}$ on $G/(v_1,v_2)$. Hence, we conclude that $x_{1,2}^{2m_e-1}|(x_{1,2}^{2m_e-2} \cdot p_{\mathcal{A}})$ and hence, $x_{1,2}|p_{\mathcal{A}}$. We conclude that $\alpha$ is contained in $x_{1,2} \cdot {\rm CPark}_{G/e}$.
\end{proof}

\subsection{Exactness}

\begin{proposition} The $G$-parking Tutte complex is a short exact sequence. \end{proposition}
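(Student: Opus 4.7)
The plan is to verify short exactness at each homological degree, reusing the three propositions already established in this section: well-definedness of $\psi_0$ and $\phi_0$, the complex property, and the identification $\ker(\psi_0)=x_{1,2}\cdot {\rm GC}_{G/e}$ from Proposition~\ref{kerpsi_prop}. Exactness at homological degree two is then immediate. For exactness at homological degree zero I will show that $\phi_0$ is surjective by exhibiting, for each standard generator $\tilde{\mathcal{A}}$ of ${\rm GC}_{G\setminus e}\otimes_R R_e$, the lift $\mathcal{A}':=\tilde{\mathcal{A}}_{e^+}$ obtained by orienting $e$ from $v_1$ to $v_2$. Acyclicity and the unique-sink property at $v_2$ survive the extension because $v_2$ has no outgoing edges in $\tilde{\mathcal{A}}$ and $v_1$ acquires $e$ as an outgoing edge. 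When $m_e=1$ I still need $e$ to be non-contractible on $\mathcal{A}'$; this is supplied by Lemma~\ref{notcontra_lem}, since $v_1$ is not a sink of $\tilde{\mathcal{A}}$ and $v_2$ is not a neighbor of $v_1$ in $G\setminus e$, so any outgoing edge from $v_1$ ends at some $v_k\neq v_2$. Either way $\phi_0(\mathcal{A}')=\tilde{\mathcal{A}}$.

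The substantive step, which I expect to be the main obstacle, is middle exactness $\ker(\phi_0)\subseteq{\rm image}(\psi_0)$. I will work at the level of the presenting free $R_e$-modules $F^G, F^{G\setminus e}$ on the standard generating sets, with standard-syzygy submodules $K^G, K^{G\setminus e}$ and $R_e$-linear lifts $\tilde{\psi}_0,\tilde{\phi}_0$ of $\psi_0,\phi_0$. Given $\alpha\in F^G$ with $\tilde{\phi}_0(\alpha)\in K^{G\setminus e}$, I would expand $\tilde{\phi}_0(\alpha)=\sum_\gamma r_\gamma\, s_\gamma^{G\setminus e}$ in standard syzygies and realize each $s_\gamma^{G\setminus e}$ as $\tilde{\phi}_0$ of an element of $K^G+{\rm image}(\tilde{\psi}_0)$. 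For an adjacent pair $(v_i,v_j)\neq(v_1,v_2)$ in $G\setminus e$, the standard syzygy $s_{(\mathcal{B},\mathcal{P}_{i,j}^{G\setminus e})}^{G\setminus e}$ is the $\tilde{\phi}_0$-image of the standard syzygy $s_{(\mathcal{B}^*,\mathcal{P}_{i,j}^G)}^G\in K^G$, where $\mathcal{B}^*$ extends $\mathcal{B}$ to $\mathcal{P}_{i,j}^G$ by orienting the image of $e$ from the image of $v_1$ to the image of $v_2$; here Lemma~\ref{notcontra_lem} keeps $e$ non-contractible on each $\mathcal{B}^*_{(v_i,v_j)^\pm}$, so $\tilde{\phi}_0$ acts by deletion of $e$ on these summands, and the type (one, two, or three) of the two syzygies coincides because adding or removing $e$ does not alter the sink status of $v_i$ or $v_j$. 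For the pair $(v_1,v_2)$, which only contributes a standard syzygy to $G\setminus e$ when $m_e>1$, the relevant $x_{1,2}^{m_e-1}\mathcal{A}_{(v_1,v_2)^+}$ equals $\tilde{\phi}_0(\tilde{\psi}_0(\mathcal{A}))$ and therefore already lies in $\tilde{\phi}_0({\rm image}(\tilde{\psi}_0))$.

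Subtracting these lifts from $\alpha$ leaves a residue in $\ker(\tilde{\phi}_0)$. When $m_e=1$, $\tilde{\phi}_0$ annihilates exactly the generators $\mathcal{A}'$ on which $e$ is contractible, and these are precisely the elements $\mathcal{A}_{e^+}$ for $\mathcal{A}$ a standard generator of ${\rm GC}_{G/e}$, so $\ker(\tilde{\phi}_0)={\rm image}(\tilde{\psi}_0)$. When $m_e>1$, the map $\mathcal{A}'\mapsto \mathcal{A}'\setminus e$ is a bijection between standard generators, making $\tilde{\phi}_0$ injective, so $\ker(\tilde{\phi}_0)=0$. In both cases $\alpha\in K^G+{\rm image}(\tilde{\psi}_0)$, and descending to the quotient by $K^G$ gives $\alpha\in{\rm image}(\psi_0)$, as required. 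The hardest part is the syzygy correspondence in the previous paragraph, which requires a case analysis on whether $\{v_i,v_j\}$ meets $\{v_1,v_2\}$ and on the type of the standard syzygy, analogous to but somewhat more involved than the well-definedness proofs for $\psi_0$ and $\phi_0$.
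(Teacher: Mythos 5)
Your overall architecture (lift the standard syzygies of ${\rm GC}_{G\setminus e}\otimes_R R_e$ through $\tilde{\phi}_0$, then compute $\ker(\tilde{\phi}_0)$ at the level of the presenting free modules) is sound and close to the paper's, and your treatment of injectivity, surjectivity, the $(v_1,v_2)$-pair syzygies for $m_e>1$, and the identification $\ker(\tilde{\phi}_0)={\rm image}(\tilde{\psi}_0)$ (resp. $0$) is fine. But the justification you give for the syzygy-lifting step fails in exactly the case that carries all the difficulty. For a pair $(v_1,v_j)$ with $j\neq 2$ and $m_e=1$, it is \emph{not} true that Lemma~\ref{notcontra_lem} keeps $e$ non-contractible on both $\mathcal{B}^*_{(v_1,v_j)^{\pm}}$, nor that the type of the syzygy is unchanged: if in $\mathcal{B}^*_{(v_1,v_j)^{-}}$ the only outgoing edge of $v_1$ is $e$ (i.e.\ $v_1$ becomes a sink of $\mathcal{B}_{(v_1,v_j)^{-}}$ after deleting $e$), then $e$ \emph{is} contractible on $\mathcal{B}^*_{(v_1,v_j)^{-}}$, so $\tilde{\phi}_0$ sends that summand to $0$ rather than acting by deletion, and the syzygy has type three in $G$ but type one in $G\setminus e$. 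Your blanket claim that ``adding or removing $e$ does not alter the sink status of $v_i$ or $v_j$'' is precisely wrong when $v_1\in\{v_i,v_j\}$, since adding $e$ changes only $v_1$'s outdegree and can destroy its sink status.

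The lifting conclusion you need is nevertheless true: in that case the image of the type-three $G$-syzygy $x_{1,2}^{m_{1,j}}\mathcal{B}^*_{(v_1,v_j)^{+}}+x_j^{m_{1,j}}\mathcal{B}^*_{(v_1,v_j)^{-}}$ under $\tilde{\phi}_0$ is exactly the type-one syzygy $x_{1,2}^{m_{1,j}}\cdot(\mathcal{B}^*_{(v_1,v_j)^{+}}\setminus e)$ of $G\setminus e$, because the contractible summand dies; this is the same computation that appears in the well-definedness proof of $\phi_0$. So your plan is repairable, but as written the key case is mis-argued, and it is not incidental: these ``type three to type one'' syzygies are the whole reason nontrivial elements of ${\rm image}(\psi_0)$ enter $\ker(\phi_0)$, and they are where the paper's own degree-one argument does its work (there, by showing $x_{1,2}^{m_{1,j}}$ divides the relevant coefficients and rewriting $x_{1,2}^{m_{1,j}}\cdot\mathcal{A}'$ as $-x_j^{m_{1,j}}\cdot\mathcal{B}'$ with $\mathcal{B}'$ of the form $\mathcal{A}_{e^{+}}$, hence in ${\rm image}(\psi_0)$, using a standard syzygy of ${\rm GC}_G\otimes_R R_e$). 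Once you replace your incorrect uniform claim by this case analysis, your endgame --- subtracting the lifts and using $\ker(\tilde{\phi}_0)={\rm image}(\tilde{\psi}_0)$ for $m_e=1$ and $\ker(\tilde{\phi}_0)=0$ for $m_e>1$ --- is a clean alternative to the paper's coefficient-divisibility argument and yields the same result.
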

\begin{proof}
We show the exactness of the $G$-parking Tutte complex at every homological degree. At homological degree zero, the exactness follows from Proposition \ref{kerpsi_prop}. At homological degree two, the exactness is equivalent to the surjectivity of $\phi_0$. 
To see the surjectivity of $\phi_0$, we consider two cases. Suppose that $m_e=1$. Note that for every standard generator $\mathcal{A}''$ of ${\rm CPark}_{G \setminus e} \otimes_R R_e$, there is the acyclic orientation $\mathcal{A}'$ on $G$ obtained by further orienting $e$ such that $v_1$ is the source. The edge $e$ is not contractible on $\mathcal{A}'$ since there is at least one edge other than $e$ with a source at $v_1$ and we can now apply Lemma \ref{notcontra_lem}.  Hence, $\phi_0$ takes $\mathcal{A}'$ to $\mathcal{A}''$. Since every standard generator of ${\rm CPark}_{G \setminus e} \otimes_R R_e$ is in the image of an element in $\phi_0$ we conclude that $\phi_0$ is surjective. If $m_e>1$, then this is immediate from the construction of $\phi_0$ since every acyclic orientation on $G \setminus e$ with a unique sink at $v_2$ gives rise to an acyclic orientation on $G$ with a unique sink at $v_2$ by further orienting $e$ such that $v_1$ is the source.  

We turn to homological degree one. We must show that the kernel of $\phi_0$ is equal to the image of $\psi_0$ which in turn is $\langle x_{1,2}^{m_e-1} \mathcal{A}_{e^{+}} \rangle$, where $\mathcal{A}$ ranges over acyclic orientations on $G/e$ with a unique sink at $v_{1,2}$.  Suppose that $b=\sum_{\mathcal{A'}}p_{\mathcal{A'}} \mathcal{A'}$ is an element in the kernel of $\phi_0$. We know that $\sum_{\mathcal{A'}}p_{\mathcal{A'}} \phi_0(\mathcal{A'})=0$.  Suppose that $m_e>1$. Note that the map $\mathcal{A'} \rightarrow \mathcal{A'} \setminus e$ at the level of sets is a bijection.  Hence,  $\sum_{\mathcal{A'}}p_{\mathcal{A'}} \cdot \phi_0(\mathcal{A'})$ is a syzygy of  ${\rm CPark}_{G \setminus e} \otimes_R R_e$ and  can be written as an $R_e$-linear combination of the standard syzygies of ${\rm CPark}_{G \setminus e} \otimes_R R_e$. More precisely,  we have:

\begin{equation}\label{syzeqnses_lem}
\sum_{\mathcal{A'}}p_{\mathcal{A'}} \cdot \phi_0(\mathcal{A'})=\sum_{(\mathcal{A''},\mathcal{P}_{i,j})}r_{(\mathcal{A''},\mathcal{P}_{i,j})}\cdot s_{(\mathcal{A''},\mathcal{P}_{i,j})}
\end{equation}
, where $r_{(\mathcal{A''},\mathcal{P}_{i,j})} \in R_e$ and $s_{(\mathcal{A''},\mathcal{P}_{i,j})}$ is the standard syzygy of ${\rm CPark}_{G \setminus e} \otimes_R R_e$ corresponding to the acyclic orientation $\mathcal{A''}$ on the partition graph $\mathcal{P}_{i,j}$ of $G \setminus e$. Note that this equation is on the free $R_e$-module of rank equal to the number of acyclic orientations on $G \setminus e$ with a unique sink at $v_2$.
 
 Suppose that $(i,j) \neq (1,2)$ as an unordered pair. By the construction of the map $\phi_0$, the standard syzygy is the image of $\phi_0$ over the standard syzygy of ${\rm CPark}_{G} \otimes_R R_e$ corresponding to the same pair $(\mathcal{A''},\mathcal{P}_{i,j})$. Hence, these syzygies can be cleared exactly as in the proof of Proposition \ref{kerpsi_prop} and we can assume that $(i,j)=(1,2)$ in the right hand side of Equation (\ref{syzeqnses_lem}). Since every standard syzygy corresponding to  $(\mathcal{A''},\mathcal{P}_{1,2})$ is of the form $x_{1,2}^{m_e-1} \cdot \mathcal{A''}_{e^{+}}$, we conclude that $x_{1,2}^{m_e-1}$ divides $p_{\mathcal{A''}}$ for every $\mathcal{A''}$ and hence, $b \in \langle x_{1,2}^{m_e-1} \mathcal{A}_{e^{+}} \rangle$.  Thus, the Tutte complex is exact in homological degree one for $m_e>1$.
 
 We turn to the case $m_e=1$.  Consider any element of the form $\sum_{\mathcal{A'}}p_{\mathcal{A'}}\cdot \mathcal{A'}$, where $\mathcal{A'}$ is an acyclic orientation on $G$ such that $e$ is not contractible on it. It suffices to show that if such a $\sum_{\mathcal{A'}}p_{\mathcal{A'}}\cdot \mathcal{A'}$  is in kernel of $\phi_0$, then it is also in the image of $\psi_0$. By Equation (\ref{syzeqnses_lem}),  we assume by a clearing argument  that $(i,j)=(1,j)$ where $j \neq 2$ in Equation  (\ref{syzeqnses_lem}). Furthermore, we can assume that the standard syzygies that appear in Equation  (\ref{syzeqnses_lem}) corresponding to $\mathcal{P}_{1,j}$ are of the form $x_{1,2}^{m_{1,j}} \cdot \mathcal{A''}$.   These are precisely standard syzygies that change type from type three to type one from $G$ to $G \setminus e$. This means that the acyclic orientation $\mathcal{A''}$ is such that there is a unique vertex $v_j \notin \{v_1,v_2\}$ that is adjacent to $v_1$, and with $v_1$ as the source of every edge between $v_1$ and $v_j$. Hence, we conclude that $x_{1,2}^{m_{1,j}}$ divides $p_{\mathcal{A'}}$.   
 
 Next, note that   $x_{1,2}^{m_{1,j}} \cdot  \mathcal{A'}+x_j^{m_{1,j}} \cdot \mathcal{B'}$  is a standard syzygy of ${\rm CPark}_{G} \otimes_R R_e$ corresponding to the acyclic orientation $\mathcal{A'}/(1,j)=\mathcal{B'}/(1,j)$ on $G/(1,j)$ and $ \mathcal{B'}$ is the acyclic orientation on $G$ obtained from $\mathcal{A'}$ by reversing the orientation of every edge between $(1,j)$. Note that the edge $e$ is contractible on  $\mathcal{B'}$ (since $\phi_0(\mathcal{B'})=0$) and hence, it is of the form $\mathcal{A}_{e^{+}}$ for an acyclic orientation $\mathcal{A}$ on $G/e$. Hence, $x_{1,2}^{m_{1,j}} \cdot  \mathcal{A'}=-x_j^{m_{1,j}} \cdot \mathcal{B'}$ is in the image of $\psi_0$. This completes the proof of exactness at homological degree one.   
  \end{proof}

\section{The Toppling Tutte Short Exact Sequence}\label{topptutteses_sect}

In this section, we detail the proof of Theorem \ref{Tutteses_theo}.

\subsection{Well-definedness of $\psi_1$}

We start by recalling the construction of the candidate map $\psi_1$. Suppose that $e$ is an edge of multiplicity $m_e$ between the vertices $v_1$ and $v_2$. Let $\mathcal{A}$ be an acyclic orientation on $G/e$, and let $\mathcal{A}_{e^{+}}$ and $\mathcal{A}_{e^{-}}$ be orientations on $G$ obtained by further orienting $e$ such that its source is $v_1$ and $v_2$ respectively. Note that since $\mathcal{A}$ is acyclic, the orientations $\mathcal{A}_{e^{+}}$ and $\mathcal{A}_{e^{-}}$ are also acyclic.  The candidate map $\psi_1$ takes $[\mathcal{A}]$ to $x_{1,2}^{m_e-1}[A_{e^{+}}]+x_{1,2}^{m_e-1}[A_{e^{-}}]$ in ${\rm CTopp}_G \otimes_R R_e$.  We first show that this association is independent of the choice of representatives in the equivalence class of $\mathcal{A}$.


\begin{lemma}\label{psican_lem} Suppose that $\mathcal{A}$ is an acyclic orientation on $G/e$. The equivalence classes of the acyclic orientations $\mathcal{A}_{e^{+}}$ and $\mathcal{A}_{e^{-}}$ on $G$ are independent of the choice of representatives in the equivalence class of $\mathcal{A}$. \end{lemma}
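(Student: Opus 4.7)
The plan is to invoke Theorem~\ref{acycequiv_theo}, which characterises equivalence of acyclic orientations via source-sink reversals. It therefore suffices to show that if $\mathcal{A}'$ is obtained from $\mathcal{A}$ by a \emph{single} source-sink reversal at some vertex $v$ of $G/e$, then $\mathcal{A}'_{e^{+}} \sim \mathcal{A}_{e^{+}}$ on $G$ (the statement for $e^{-}$ will follow by the symmetric argument swapping the roles of $v_1$ and $v_2$). I would then split on whether $v=v_{1,2}$.

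When $v\neq v_{1,2}$, the vertex $v$ is also a vertex of $G$, and the construction $\mathcal{A}\mapsto\mathcal{A}_{e^{+}}$ does not alter the orientation of any edge incident to $v$. Hence $v$ is a source (resp.\ sink) in $\mathcal{A}_{e^{+}}$ exactly when it is one in $\mathcal{A}$, and reversing $v$ on $\mathcal{A}_{e^{+}}$ yields $\mathcal{A}'_{e^{+}}$ directly; a single reversal witnesses the equivalence.

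The substantive case is $v=v_{1,2}$. Assume $v_{1,2}$ is a source of $\mathcal{A}$ (the sink case is symmetric). Then every non-parallel edge at $v_1$ or at $v_2$ is outgoing in $\mathcal{A}_{e^{+}}$, and the $m_e$ parallels of $e$ are oriented $v_1\to v_2$ by construction; so $v_1$ is a source of $\mathcal{A}_{e^{+}}$. Reverse $v_1$: the parallels now run $v_2\to v_1$, while the non-parallel edges at $v_2$ were untouched and remain outgoing from $v_2$, so $v_2$ is a source of the new orientation. Reverse $v_2$: the parallels flip back to $v_1\to v_2$ and the non-parallel edges at $v_2$ become incoming. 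The resulting orientation has all non-parallel edges at $v_1$ and $v_2$ incoming to their respective endpoints and all parallels of $e$ directed $v_1\to v_2$, which is precisely $\mathcal{A}'_{e^{+}}$ (since $v_{1,2}$ is a sink of $\mathcal{A}'$). Thus $\mathcal{A}_{e^{+}}$ and $\mathcal{A}'_{e^{+}}$ differ by two source-sink reversals and are equivalent.

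The main obstacle I anticipate is bookkeeping in Case~2: one must verify that each intermediate reversal is legal, i.e., that $v_2$ really is a source of the orientation produced after the first reversal. The crucial observation making this automatic is that a reversal at $v_1$ leaves every edge disjoint from $v_1$ unchanged; in particular the non-parallel edges at $v_2$ retain their original orientation, so the source status of $v_2$ follows from the hypothesis that $v_{1,2}$ was a source of $\mathcal{A}$. Once that is in hand, the fact that the composite of the two reversals reproduces $\mathcal{A}'_{e^{+}}$ is a local verification at $v_1$ and $v_2$ only, as every other vertex is untouched throughout.
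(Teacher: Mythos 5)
Your proposal is correct, but it takes a genuinely different route from the paper. The paper does not argue reversal-by-reversal: it invokes the Baker--Norine result that every equivalence class of acyclic orientations on $G/e$ has a unique representative $\mathcal{A}_{\rm uni}$ with unique sink at $v_{1,2}$, notes that the sequences of sink reversals taking $\mathcal{A}_1$ and $\mathcal{A}_2$ to $\mathcal{A}_{\rm uni}$ never reverse $v_{1,2}$, lifts those reversals verbatim to $G$ (they occur at vertices other than $v_1,v_2$), and concludes that $(\mathcal{A}_1)_{e^{\pm}}$ and $(\mathcal{A}_2)_{e^{\pm}}$ are equivalent because both are transformed to the common lift $(\mathcal{A}_{\rm uni})_{e^{\pm}}$. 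You instead induct along an arbitrary source--sink reversal sequence provided by Theorem \ref{acycequiv_theo}, lifting reversals at $v\neq v_{1,2}$ directly and simulating a reversal at $v_{1,2}$ by the composite of two legal reversals in $G$, at $v_1$ and then $v_2$ (your local bookkeeping there is accurate: the parallels of $e$ flip and flip back, the non-parallel edges at $v_1$ and $v_2$ each get flipped exactly once, and every other edge is untouched). Your argument is more elementary and self-contained, needing only the reversal characterisation of equivalence and avoiding both the unique-sink-representative theorem and the paper's (unargued) claim that the normalizing sequence avoids $v_{1,2}$; the two-reversal simulation of a reversal at the contracted vertex is the genuinely new local ingredient. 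What the paper's normalization buys in exchange is that the lifted orientations are exhibited as equivalent to a canonical orientation with unique sink at $v_2$ (resp.\ $v_1$), which meshes with how standard generators are indexed elsewhere in the paper, whereas your proof establishes the equivalence without producing such a canonical representative.
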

\begin{proof}
Suppose that acyclic orientations $\mathcal{A}_1$ and $\mathcal{A}_2$ on $G/e$ are equivalent.  We know from \cite[Section 3.1]{BakNor07} that for any vertex $u$ of $G/e$, there exists a (unique) acyclic orientation $\mathcal{A}_{\rm uni}$ with a unique sink at $u$ that is equivalent to $\mathcal{A}_1$ and $\mathcal{A}_2$. Furthermore, by Theorem \ref{acycequiv_theo},  there is a sequence of source-sink reversals that transform them to $\mathcal{A}_{\rm uni}$.

Take $u=v_{1,2}$ and note that a sink reversal at $v_{1,2}$ can be avoided in this sequence.  This allows us to perform precisely the same sequence of source-sink reversals for $({\mathcal{A}_1})_{e^{+}}$ and $({\mathcal{A}_2})_{e^{+}}$. If $v_{1,2}$ is a sink for an acyclic orientation $\mathcal{A}$ on $G/e$, then $v_2$ is a sink for $\mathcal{A}_{e^{+}}$ and $v_1$ is a sink for $\mathcal{A}_{e^{-}}$. Hence, for $i=1,2$ these operations transform $(\mathcal{A}_i)_{e^{+}}$  and $(\mathcal{A}_i)_{e^{-}}$ into an acyclic orientation on $G$ with a unique sink at $v_2$ and $v_1$ respectively. From \cite{Mos72,Bac17}, this implies that the acyclic orientations $({\mathcal{A}_1})_{e^{+}}$  and $({\mathcal{A}_2})_{e^{+}}$ on $G$ are equivalent and that $({\mathcal{A}_1})_{e^{-}}$  and $({\mathcal{A}_2})_{e^{-}}$ are also equivalent. 
\end{proof}

Next, we show this candidate map induces a map between the  toppling critical modules ${\rm CTopp}_{G/e}$ and ${\rm CTopp}_G \otimes_R R_e$.  We show this using Proposition \ref{relpre_prop}. 

\begin{lemma}\label{psi1_lem}
The candidate map $\psi_1$ is well-defined. 
\end{lemma}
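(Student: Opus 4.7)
The plan is to apply Proposition \ref{relpre_prop}: since Lemma \ref{psican_lem} already shows the assignment is independent of the choice of representative in the class of $\mathcal{A}$, it suffices to verify that $\psi_1$ sends each standard syzygy of $C_{G/e}$ to a syzygy of $C_G \otimes_R R_e$. A standard syzygy of $C_{G/e}$ has the form $x_i^m[\mathcal{B}_{(u_i,u_j)^+}] + x_j^m[\mathcal{B}_{(u_i,u_j)^-}]$, where $(u_i,u_j)$ is an adjacent pair of vertices in $G/e$ of multiplicity $m$ and $\mathcal{B}$ is an acyclic orientation on the corresponding partition graph. I proceed by case analysis on whether the contracted pair involves the vertex $v_{1,2}$.

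If neither $u_i$ nor $u_j$ equals $v_{1,2}$, the pair corresponds to a pair $(v_i, v_j)$ in $G$ with neither equal to $v_1$ or $v_2$, and the multiplicities in $G/e$ and $G$ coincide. Writing $\mathcal{B}_{e^{+}}$ and $\mathcal{B}_{e^{-}}$ for the two acyclic lifts of $\mathcal{B}$ to $G/(v_i,v_j)$ obtained by orienting $e$ in either direction, a direct expansion shows that $\psi_1$ of the syzygy equals $x_{1,2}^{m_e-1}$ times the sum of the two standard syzygies of $C_G \otimes_R R_e$ coming from the partition graph $G/(v_i,v_j)$ applied to $\mathcal{B}_{e^{+}}$ and $\mathcal{B}_{e^{-}}$ respectively; hence the image lies in the syzygy module.

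The case in which one of the vertices, say $u_i$, equals $v_{1,2}$ (with $u_j$ corresponding to $v_j$ in $G$) is the main obstacle. Let $m_1, m_2$ denote the multiplicities of the edges $(v_1, v_j)$ and $(v_2, v_j)$ in $G$, so that $m = m_1 + m_2$ and the partition graph in question identifies the three vertices $v_1, v_2, v_j$. The image $\psi_1(\text{syz})$ is supported on the four acyclic extensions of $\mathcal{B}$ to $G$ in which both $(v_1,v_j)$ and $(v_2,v_j)$ are oriented in the same direction (either all toward $v_j$ or all away from $v_j$), combined with either orientation of $e$. To realise this element as a syzygy of $C_G \otimes_R R_e$, I would combine four standard syzygies, two coming from the partition graph $G/(v_1,v_j)$ and two from $G/(v_2,v_j)$, corresponding to the two acyclic sign patterns on the parallel edges between the super-vertex and the remaining free vertex in each case, with monomial multipliers in $x_{1,2}$ and $x_j$ chosen so that the four ``correct'' terms appear with the required coefficients. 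Such combinations inevitably produce ``mixed'' contributions, supported on extensions in which exactly one of $(v_1,v_j), (v_2,v_j)$ is reversed relative to the other; the key observation is that each such mixed term arises exactly twice from two distinct combinations, and in characteristic two these cancel pairwise, leaving precisely $\psi_1(\text{syz})$. This cancellation is exactly where the characteristic-two hypothesis of Theorem \ref{Tutteses_theo} is used. The sub-case where $m_1 = 0$ or $m_2 = 0$ reduces to a simpler version of this argument using only one of the partition graphs $G/(v_1,v_j)$ or $G/(v_2,v_j)$.
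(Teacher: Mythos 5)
Your proposal is correct and follows essentially the same route as the paper: reduce to preservation of standard syzygies via Proposition \ref{relpre_prop} (with Lemma \ref{psican_lem} for representative-independence), handle the pairs not involving $v_{1,2}$ (and the case where only one of $v_1,v_2$ is adjacent to $v_j$) by splitting the image into standard syzygies of $C_G\otimes_R R_e$, and in the triangle case write the image as a monomial combination of the standard syzygies coming from $G/(v_1,v_j)$ and $G/(v_2,v_j)$, with the ``mixed'' orientation (the paper's $\mathcal{K}=\mathcal{L}$) occurring twice with equal coefficient $x_{1,2}^{m_1}x_j^{m_2}$ and cancelling. The paper merely organises the last step by treating the $e^{+}$ and $e^{-}$ lifts separately (two syzygies each) rather than all four at once, which is the same computation.
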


\begin{proof}
We use Proposition \ref{relpre_prop} to show that $\psi_1$ is well-defined. In other words, we show that $\psi_1$ preserves the standard syzygies of ${\rm CTopp}_{G/e}$.  Using the combinatorial description of the syzygies of ${\rm CTopp}_{G/e}$ in Subsection \ref{freepres_subsect}, we know that the generators of the first syzygy module of ${\rm CTopp}_{G/e}$ are in one to one correspondence with equivalence classes of acyclic orientations on contractions of pairs of adjacent vertices of $G/e$. For a pair of adjacent vertices $(v_i,v_j)$ and an acyclic orientation $\mathcal{A}$ on the contraction of $(v_i,v_j)$ in $G/e$, the corresponding syzygy of ${\rm CTopp}_{G/e}$ is given by $x_i^{m} [\mathcal{A}_{(v_i,v_j)^{+}}]-x_j^m [\mathcal{A}_{(v_i,v_j)^{-}}]$, where $m$ is the number of edges between the pair $(v_i,v_j)$ and $\mathcal{A}_{(v_i,v_j)^{+}}$ and $\mathcal{A}_{(v_i,v_j)^{-}}$ are acyclic orientations obtained from $\mathcal{A}$ by further orienting all the edges between $(v_i,v_j)$ so that the source is $v_i$ and $v_j$ respectively. We must show that $x_i^{m} \psi_1([\mathcal{A}_{(v_i,v_j)^{+}}])-x_j^m \psi_1([\mathcal{A}_{(v_i,v_j)^{-}}])$ is a syzygy of ${\rm CTopp}_G \otimes_R R_e$. Note that 

\begin{center}

$x_i^{m} \psi_1([\mathcal{A}_{(v_i,v_j)^{+}}])-x_j^m \psi_1([\mathcal{A}_{(v_i,v_j)^{-}}])=x_i^m([(\mathcal{A}_{(v_i,v_j)^{+}})_{e^{+}}]+[(\mathcal{A}_{(v_i,v_j)^{+}})_{e^{-}}])-x_j^m([(\mathcal{A}_{(v_i,v_j)^{-}})_{e^{+}}]+[(\mathcal{A}_{(v_i,v_j)^{-}})_{e^{-}}])$.
 
 \end{center}
Suppose that none of $v_i$ and $v_j$ is $v_{1,2}$, then $x_i^m[(\mathcal{A}_{(v_i,v_j)^{+}})_{e^{+}}]-x_j^m[(\mathcal{A}_{(v_i,v_j)^{-}})_{e^{+}}]$ and $x_i^m[(\mathcal{A}_{(v_i,v_j)^{+}})_{e^{-}}]-x_j^m[(\mathcal{A}_{(v_i,v_j)^{-}})_{e^{-}}]$ are standard syzygies of ${\rm CTopp}_{G} \otimes_R R_e$. Similarly, if $v_i=v_{1,2}$ and exactly one of $v_1$ and $v_2$ is adjacent to $v_j$ in $G$, then $x_i^m[(\mathcal{A}_{(v_i,v_j)^{+}})_{e^{+}}]-x_j^m[(\mathcal{A}_{(v_i,v_j)^{-}})_{e^{+}}]$ and $x_i^m[(\mathcal{A}_{(v_i,v_j)^{+}})_{e^{-}}]-x_j^m[(\mathcal{A}_{(v_i,v_j)^{-}})_{e^{-}}]$ are standard syzygies of ${\rm CTopp}_{G} \otimes_R R_e$. 

Finally, consider the case where $v_i=v_{1,2}$, and both $v_1$ and $v_2$ are adjacent to $v_j$ in $G$. In other words, if there is a triangle between $v_1$, $v_2$ and $v_j$ in $G$, then an analogous argument does not hold. We employ a different argument.  
We express $\mathcal{S}=x_{1,2}^m[(\mathcal{A}_{(v_{1,2},v_j)^{+}})_{e^{+}}]-x_j^m[(\mathcal{A}_{(v_{1,2},v_j)^{-}})_{e^{+}}]$  as an $R_e$-linear combination of two standard syzygies of ${\rm CTopp}_G \otimes_R R_e$. These standard syzygies $\mathcal{S}_1$ and $\mathcal{S}_2$ are the following: the syzygy $\mathcal{S}_1$  corresponds to the acyclic orientation $\mathcal{A}_1$ on the contraction of the pair $(v_1,v_j)$ in $G$ defined as follows: $\mathcal{A}_1$ agrees with $\mathcal{A}_{(v_{1,2},v_j)^{+}}$ on all the common edges and the edge $v_1,v_2$ is further oriented such that $v_1$ is the source. The other one $\mathcal{S}_2$ corresponds to the acyclic orientation $\mathcal{A}_2$ on the contraction of the pair $(v_2,v_j)$ in $G$ where  $\mathcal{A}_2$ agrees with  $\mathcal{A}_{(v_{1,2},v_j)^{-}}$ on all the common edges and the edge $v_1,v_2$ is further orientated such that $v_1$ is the source, see Figure \ref{prooffig1}. Note that the former syzygy $\mathcal{S}_1$ is $x_{j}^{m_1}[(\mathcal{A}_{(v_{1,2},v_j)^{-}})_{e^{+}}]-x_{1,2}^{m_1}[\mathcal{K}]$ and the later syzygy $\mathcal{S}_2$ is  $x_{j}^{m_2}[\mathcal{L}]-x_{1,2}^{m_2}[(\mathcal{A}_{(v_{1,2},v_j)^{+}})_{e^{+}}]$, where $m_1$ and $m_2$ is the number of edges in $G$ between $(v_1,v_j)$ and $(v_2,v_j)$ respectively.  Furthermore, observe that $\mathcal{K}=\mathcal{L}$ (we do not make  $\mathcal{K}$ and $\mathcal{L}$ more explicit since we do not invoke it) and that $m=m_1+m_2$. Hence,  $\mathcal{S}=x_j^{m_2}\mathcal{S}_1+x_{1,2}^{m_1}\mathcal{S}_2$. 
Similarly, we express $x_{1,2}^{m}[(\mathcal{A}_{(v_{1,2},v_j)^{+}})_{e^{-}}]-x_j^m[(\mathcal{A}_{(v_{1,2},v_j)^{-}})_{e^{-}}]$ as an $R_e$-linear combination of standard syzygies of ${\rm CTopp}_G$ by interchanging $v_1$ and $v_2$ in the above construction. This completes the proof of the well-definedess of $\psi_1$.
\end{proof}

\begin{figure}
  \centering
  \includegraphics[width=6cm]{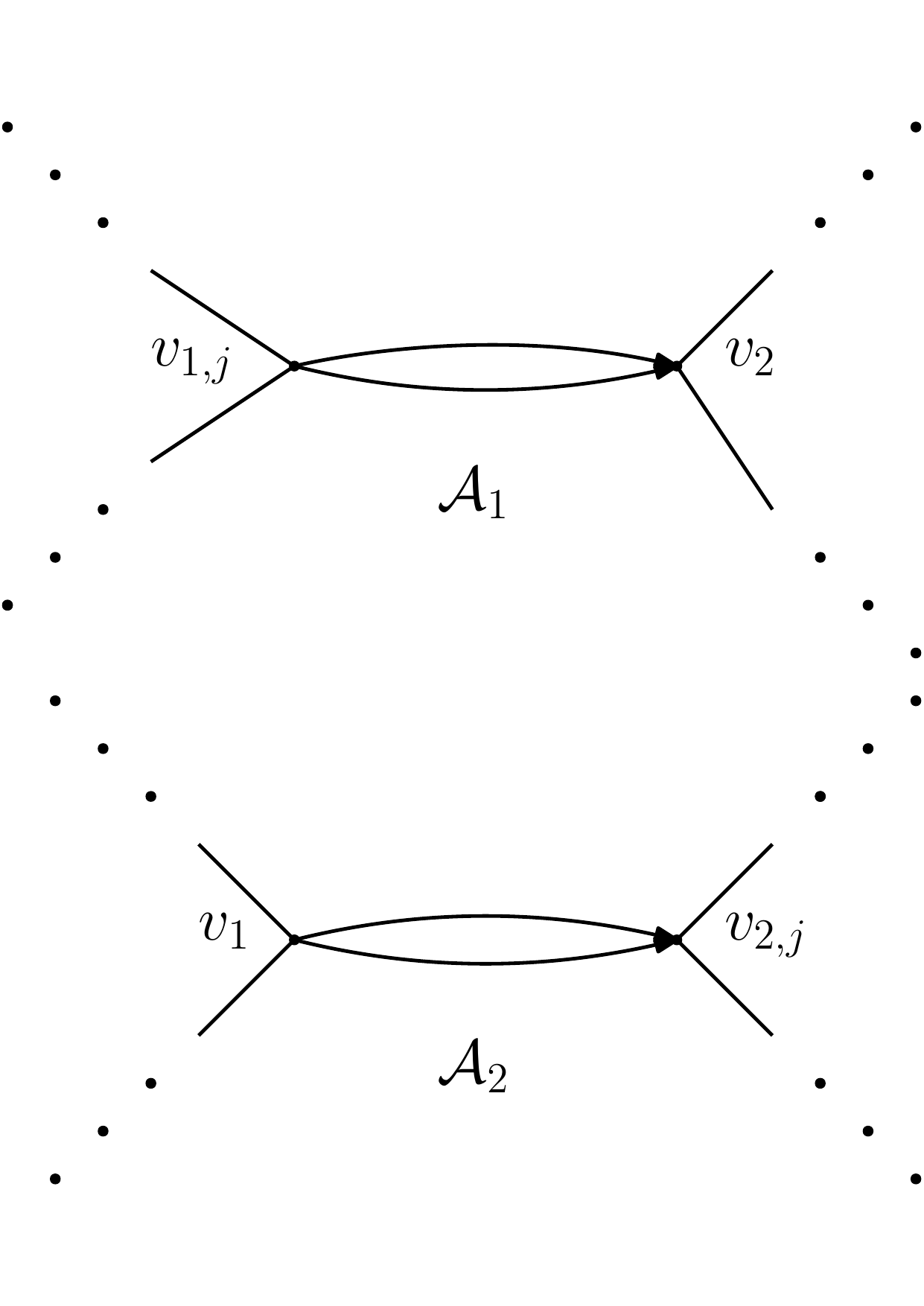}
\caption{The acyclic orientations $\mathcal{A}_1,\mathcal{A}_2$ in the proof of Lemma \ref{psi1_lem}}\label{prooffig1}
\end{figure}

\subsection{Well-definedness of $\phi_1$}

 Recall that the map $\phi_1: {\rm CTopp}_G \otimes_R R_e \rightarrow {\rm CTopp}_{G \setminus e} \otimes_R R_e$ takes the generator $[\mathcal{A'}]$ of ${\rm CTopp}_G$, corresponding to an acyclic orientation $\mathcal{A}$ on $G$,  to $[\mathcal{A'} \setminus e]$ in ${\rm CTopp}_{G \setminus e}$. The following lemma shows that the association $[\mathcal{A}] \rightarrow [\mathcal{A} \setminus e]$ in the candidate map $\phi_1$ does not depend on the choice of representatives.  

\begin{lemma} Let $\mathcal{A}_1$ and $\mathcal{A}_2$ be equivalent acyclic orientations on $G$. The acyclic  orientations $\mathcal{A}_1 \setminus e$ and $\mathcal{A}_2 \setminus e$ on $G \setminus e$ are equivalent.  \end{lemma}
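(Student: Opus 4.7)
\medskip

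\noindent\textbf{Proof plan.} The plan is to invoke Theorem \ref{acycequiv_theo}, which characterises equivalence of acyclic orientations as connectedness under source/sink reversals, and to lift a sequence of reversals witnessing $\mathcal{A}_1 \sim \mathcal{A}_2$ on $G$ to a sequence of reversals witnessing $\mathcal{A}_1 \setminus e \sim \mathcal{A}_2 \setminus e$ on $G \setminus e$.

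More precisely, by Theorem \ref{acycequiv_theo} there is a sequence $\mathcal{A}_1 = \mathcal{B}_0, \mathcal{B}_1, \dots, \mathcal{B}_k = \mathcal{A}_2$ of acyclic orientations on $G$ such that each $\mathcal{B}_{i+1}$ is obtained from $\mathcal{B}_i$ by reversing the edges incident on some source or sink vertex $w_i$. I claim that each such reversal descends to $G \setminus e$: the vertex $w_i$ is still a source (respectively sink) of $\mathcal{B}_i \setminus e$, and reversing $w_i$ in $\mathcal{B}_i \setminus e$ produces exactly $\mathcal{B}_{i+1} \setminus e$. For $w_i \notin \{v_1,v_2\}$ this is immediate, since deleting $e$ affects no edge at $w_i$. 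For $w_i \in \{v_1,v_2\}$, all edges of $\mathcal{B}_i$ incident to $w_i$ point away from (or towards) $w_i$, including $e$; removing one such edge leaves the remaining edges at $w_i$ uniformly oriented, so $w_i$ remains a source (respectively sink) in $\mathcal{B}_i \setminus e$. Note that $w_i$ is not isolated in $G \setminus e$ because $e$ is not a bridge and hence $v_1, v_2$ each have other incident edges (and in any case an isolated vertex would be both source and sink, so the reversal is vacuously well-defined).

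Chaining these reversals yields the sequence $\mathcal{A}_1 \setminus e = \mathcal{B}_0 \setminus e, \mathcal{B}_1 \setminus e, \dots, \mathcal{B}_k \setminus e = \mathcal{A}_2 \setminus e$ of source/sink reversals on $G \setminus e$. Applying Theorem \ref{acycequiv_theo} in the reverse direction on $G \setminus e$ concludes that $\mathcal{A}_1 \setminus e$ and $\mathcal{A}_2 \setminus e$ are equivalent.

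The only delicate point is the case $w_i \in \{v_1, v_2\}$, where one must check that deleting $e$ does not destroy the source/sink property of $w_i$; this is handled by the uniform-orientation observation above together with the non-bridge hypothesis. The rest is a routine translation using the source/sink reversal characterisation.
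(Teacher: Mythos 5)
Your proof is correct and takes essentially the same route as the paper: both invoke Theorem \ref{acycequiv_theo} and observe that any source or sink of an acyclic orientation on $G$ remains one after deleting $e$, so the same source-sink reversal sequence transforms $\mathcal{A}_1 \setminus e$ into $\mathcal{A}_2 \setminus e$. Your extra care about the vertices $v_1,v_2$ and the non-bridge hypothesis is a harmless elaboration of the same observation.
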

\begin{proof}
Using the characterisation of equivalent acyclic orientations (Theorem \ref{acycequiv_theo}), we know that there is a source-sink reversal sequence transforming $\mathcal{A}_1$ to $\mathcal{A}_2$. Since, any source or sink in an acyclic orientation $\mathcal{A}$ on $G$ remains so in the acyclic orientation $\mathcal{A} \setminus e$ on $G \setminus e$, we can perform the same source-sink reversal sequence to transform $\mathcal{A}_1 \setminus e$ to  $\mathcal{A}_2 \setminus e$. Hence, 
$\mathcal{A}_1 \setminus e$ and $\mathcal{A}_2 \setminus e$ are equivalent. 
\end{proof}

\begin{lemma} The candidate map $\phi_1$ is well-defined.  \end{lemma}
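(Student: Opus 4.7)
The plan is to invoke Proposition \ref{relpre_prop}: extend $\phi_1$ to the corresponding free modules by sending the basis element $[\mathcal{A}']$ to $[\mathcal{A}' \setminus e]$, and verify that this extension carries every standard syzygy of $C_G \otimes_R R_e$ to a syzygy of $C_{G \setminus e} \otimes_R R_e$. Fix a standard syzygy
\[
\sigma = x_i^m [\mathcal{A}_{(v_i,v_j)^+}] + x_j^m [\mathcal{A}_{(v_i,v_j)^-}]
\]
of $C_G \otimes_R R_e$ corresponding to an equivalence class of acyclic orientations $\mathcal{A}$ on the partition graph $\mathcal{P}_{i,j}$ of $G$, where $m = m_{i,j}$ and the variables $x_1, x_2$ are identified with $x_{1,2}$ in $R_e$. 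Applying the extended map gives
\[
\phi_1(\sigma) = x_i^m [\mathcal{A}_{(v_i,v_j)^+} \setminus e] + x_j^m [\mathcal{A}_{(v_i,v_j)^-} \setminus e].
\]
I would then split into two cases depending on whether the contracted pair coincides with $\{v_1,v_2\}$.

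In the first case, $\{v_i, v_j\} \neq \{v_1, v_2\}$, deletion of $e$ and contraction of $(v_i, v_j)$ commute, so $\mathcal{A}_{(v_i,v_j)^{\pm}} \setminus e = (\mathcal{A} \setminus e)_{(v_i,v_j)^{\pm}}$ viewed as orientations on $(G\setminus e)/(v_i,v_j)$. Since the number of edges between $v_i$ and $v_j$ is unaffected by deleting $e$, the expression $\phi_1(\sigma)$ is precisely the standard syzygy of $C_{G \setminus e} \otimes_R R_e$ corresponding to the pair $(\mathcal{A} \setminus e,\mathcal{P}_{i,j}(G \setminus e))$. (When one of $v_i, v_j$ lies in $\{v_1,v_2\}$, the corresponding monomial $x_i^m$ or $x_j^m$ tensors to $x_{1,2}^m$ on both sides, so this identification is preserved in $R_e$.)

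In the second case, $\{v_i, v_j\} = \{v_1, v_2\}$, the syzygy becomes $\sigma = x_{1,2}^{m_e}\bigl([\mathcal{A}_{e^+}] + [\mathcal{A}_{e^-}]\bigr)$, where $\mathcal{A}$ is an acyclic orientation on $G/(v_1, v_2)$. If $m_e = 1$, then $\mathcal{A}_{e^+}$ and $\mathcal{A}_{e^-}$ agree on all edges other than $e$, so $\mathcal{A}_{e^+} \setminus e = \mathcal{A}_{e^-} \setminus e$ as orientations on $G \setminus e$ and $\phi_1(\sigma) = 2\,x_{1,2}[\mathcal{A}_{e^+} \setminus e] = 0$ since $\mathbb{K}$ has characteristic two. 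If $m_e \geq 2$, then $v_1$ and $v_2$ remain adjacent in $G \setminus e$ with $m_e-1$ parallel edges, and the identification $(G\setminus e)/(v_1,v_2) = G/(v_1,v_2)$ shows that $x_{1,2}^{m_e-1}\bigl([\mathcal{A}_{e^+}\setminus e]+[\mathcal{A}_{e^-}\setminus e]\bigr)$ is the standard syzygy of $C_{G\setminus e}\otimes_R R_e$ attached to $\mathcal{A}$; hence $\phi_1(\sigma) = x_{1,2}$ times this standard syzygy, and so again lies in the syzygy module.

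The main obstacle is the case $\{v_i,v_j\} = \{v_1, v_2\}$ with $m_e=1$: here $\phi_1(\sigma)$ is \emph{not} a linear combination of standard syzygies of $C_{G\setminus e}\otimes_R R_e$ (because there is no pair of adjacent vertices $(v_1,v_2)$ left in $G\setminus e$ to supply one), and the characteristic-two hypothesis is exactly what is needed to force the image to vanish outright. Combining the two cases and applying Proposition \ref{relpre_prop} yields the well-definedness of $\phi_1$.
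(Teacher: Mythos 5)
Your proposal is correct and follows essentially the same route as the paper: reduce to preserving standard syzygies via Proposition \ref{relpre_prop}, split according to whether the contracted pair is $(v_1,v_2)$, and within that case treat $m_e=1$ (where $\mathcal{A}_{e^+}\setminus e=\mathcal{A}_{e^-}\setminus e$ forces the image to vanish) and $m_e>1$ (where the image is $x_{1,2}$ times the standard syzygy of $C_{G\setminus e}\otimes_R R_e$ attached to $\mathcal{A}$ on $(G\setminus e)/(v_1,v_2)=G/(v_1,v_2)$) separately. The only cosmetic difference is that you invoke characteristic two to kill $2[\mathcal{A}_{e^+}\setminus e]$, whereas the paper writes the same syzygy with a minus sign so the cancellation is sign-based; under the paper's standing characteristic-two hypothesis these are identical.
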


\begin{proof} Using Proposition \ref{relpre_prop}, it suffices to prove that $\phi_1$ preserves the standard syzygies of ${\rm CTopp}_{G} \otimes_R R_e$.   The proofs lends itself into two cases: the first case corresponds to the standard syzygy arising from contracting $G$ by a pair of vertices $(v_i,v_j)$ other than $(v_1,v_2)$ and the second case corresponds to the standard syzygy arising from contracting $G$ by $(v_1,v_2)$.

Consider the standard syzygy corresponding to an acyclic orientation $\mathcal{A}$ on $G/(v_i,v_j)$ i.e., $G$ contracted by the pair of vertices $(v_i,v_j) \neq(v_1,v_2)$ that are connected by an edge. This syzygy is  $x_i^m[\mathcal{A}_{(v_i,v_j)^{+}}]-x_j^m[\mathcal{A}_{(v_i,v_j)^{-}}]$, where $m$ is the number of edges between $v_i$ and $v_j$. By construction, $\phi_1([\mathcal{A}_{(v_i,v_j)^{+}}])=[\mathcal{A}_{(v_i,v_j)^{+}} \setminus e]$ and  $\phi_1([\mathcal{A}_{(v_i,v_j)^{-}}])=[\mathcal{A}_{(v_i,v_j)^{-}} \setminus e]$. Note that  $x_i^m\phi_1([\mathcal{A}_{(v_i,v_j)^{+}}])-x_j^m\phi_1([\mathcal{A}_{(v_i,v_j)^{-}}])$ is a standard syzygy of ${\rm CTopp}_{G \setminus e} \otimes_R  R_e$ and corresponds to the acyclic orientation $\mathcal{A} \setminus e$ on $(G \setminus e)/(v_i,v_j)$ i.e., $G \setminus e$ contracted by the pair of vertices $(v_i,v_j)$.  Hence, this standard syzygy on ${\rm CTopp}_{G} \otimes_R R_e$ is preserved. 

Suppose that the standard syzygy corresponds to an acyclic orientation $\mathcal{A}$ on the $G/(v_1,v_2)$. This standard syzygy is  $x_{1,2}^{m_e}[\mathcal{A}_{(v_1,v_2)^{+}}]-x_{1,2}^{m_e}[\mathcal{A}_{(v_1,v_2)^{-}}]$, where $m_e$ is the number of edges between the pair $(v_1,v_2)$.  Suppose that $m_e=1$. In this case, there is exactly one edge between $(v_1,v_2)$ and since $\mathcal{A}_{(v_1,v_2)^{+}}\setminus e=\mathcal{A}_{(v_1,v_2)^{-}}\setminus e$ we have  $\phi_1([\mathcal{A}_{(v_1,v_2)^{+}}])=\phi_1([\mathcal{A}_{(v_1,v_2)^{-}}])$. Hence, 
$x_{1,2}\phi_1([\mathcal{A}_{(v_1,v_2)^{+}}])-x_{1,2}\phi_1([\mathcal{A}_{(v_1,v_2)^{-}}])=0$.  

Suppose that $m_e>1$ i.e., there are multiple edges between $(v_1,v_2)$. In this case, $\phi_1([\mathcal{A}_{(v_i,v_j)^{+}}])=[\mathcal{A}_{(v_i,v_j)^{+}} \setminus e]$ and  $\phi_1([\mathcal{A}_{(v_i,v_j)^{-}}])=[\mathcal{A}_{(v_i,v_j)^{-}} \setminus e]$. Furthermore, we know that 
 $x_{1,2}^{m_e-1}\phi_1([\mathcal{A}_{(v_i,v_j)^{+}}])-x_{1,2}^{m_e-1}\phi_1([\mathcal{A}_{(v_i,v_j)^{-}}])$ is a standard syzygy of ${\rm CTopp}_{G \setminus e} \otimes_R  R_e$. This corresponds to the acyclic orientation induced by $\mathcal{A}$ on $(G \setminus e)/(v_1,v_2)=G/(v_1,v_2)$. 
 Multiplying by $x_{1,2}$ throughout, we conclude that $x_{1,2}^{m_e}\phi_1([\mathcal{A}_{(v_i,v_j)^{+}}])-x_{1,2}^{m_e}\phi_1([\mathcal{A}_{(v_i,v_j)^{-}}])=0$. This completes the proof. 
 \end{proof}

\subsection{Complex Property}

In this subsection, we show that the toppling Tutte sequence  in Theorem \ref{Tutteses_theo} is a complex of $R_e$-modules. At homological degree zero and two,  this property is immediate.  Only homological degree one requires an argument and we address it in the following proposition.

\begin{proposition}
The kernel of the map $\phi_1$ contains the image of the map $\psi_1$. In other words, for any element $b \in {\rm CTopp}_{G/e}/{\rm ker}(\psi_1)$ we have $\phi_1(\psi_1(b))=0$.
\end{proposition}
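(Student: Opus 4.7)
Since $\psi_1$ and $\phi_1$ are $R_e$-module homomorphisms, it suffices to check $\phi_1(\psi_1([\mathcal{A}])) = 0$ for every standard generator $[\mathcal{A}]$ of $C_{G/e}/\ker(\psi_1)$, where $\mathcal{A}$ is an acyclic orientation on $G/e$. Unfolding the definitions of the two maps yields
\[
\phi_1(\psi_1([\mathcal{A}])) \;=\; x_{1,2}^{m_e-1}[\mathcal{A}_{e^{+}} \setminus e] \;+\; x_{1,2}^{m_e-1}[\mathcal{A}_{e^{-}} \setminus e]
\]
in $C_{G \setminus e} \otimes_R R_e$. The plan is to show that this element vanishes, splitting into two cases according to the multiplicity $m_e$ of $e$.

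When $m_e = 1$, the edge $e$ is simple, so after deleting $e$ the extensions $\mathcal{A}_{e^{+}}$ and $\mathcal{A}_{e^{-}}$ restrict to the same acyclic orientation on $G \setminus e$; that is, $\mathcal{A}_{e^{+}} \setminus e = \mathcal{A}_{e^{-}} \setminus e$. The expression therefore becomes $2[\mathcal{A}_{e^{+}} \setminus e]$, which vanishes because $\mathbb{K}$ has characteristic two. This is the one place where the characteristic-two hypothesis of Theorem \ref{Tutteses_theo} is essential to the proof.

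When $m_e > 1$, I rely on the convention that $\mathcal{A}_{e^{+}}$ and $\mathcal{A}_{e^{-}}$ orient all $m_e$ parallel edges between $v_1$ and $v_2$ uniformly (otherwise a $2$-cycle would destroy acyclicity). Under the obvious identification of $G/e$ with $(G\setminus e)/(v_1,v_2)$ (which differ only by the $m_e-1$ loops at $v_{1,2}$), the orientations $\mathcal{A}_{e^{+}}\setminus e$ and $\mathcal{A}_{e^{-}}\setminus e$ on $G\setminus e$ are precisely the two extensions $\mathcal{B}_{(v_1,v_2)^{+}}$ and $\mathcal{B}_{(v_1,v_2)^{-}}$ of the corresponding acyclic orientation $\mathcal{B}$ on $(G\setminus e)/(v_1,v_2)$. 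By the description in Subsection \ref{freepres_subsect} applied to $C_{G\setminus e}$ at the adjacent pair $(v_1,v_2)$ with multiplicity $m_e-1$, the element $x_1^{m_e-1}[\mathcal{B}_{(v_1,v_2)^{+}}] + x_2^{m_e-1}[\mathcal{B}_{(v_1,v_2)^{-}}]$ is a standard syzygy of $C_{G\setminus e}$; after tensoring with $R_e$ both $x_1$ and $x_2$ become $x_{1,2}$, producing exactly the displayed expression, which therefore vanishes.

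The main obstacle is essentially bookkeeping: matching standard generators and standard syzygies across the three graphs $G/e$, $G$, and $G\setminus e$ under the identifications induced by tensoring with $R_e$. Once the convention on how $\mathcal{A}_{e^{\pm}}$ treats parallel edges is fixed and the loop-identification of $G/e$ with $(G\setminus e)/(v_1,v_2)$ is noted, the case $m_e>1$ collapses to a single standard syzygy of $C_{G\setminus e}\otimes_R R_e$. The subtle point is the case $m_e = 1$, where no contracted-edge syzygy is available in $G\setminus e$ and the vanishing must be pulled instead from the characteristic-two hypothesis.
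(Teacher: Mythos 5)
Your proposal is correct and follows essentially the same route as the paper: reduce to standard generators, use $\mathcal{A}_{e^{+}}\setminus e=\mathcal{A}_{e^{-}}\setminus e$ together with characteristic two when $m_e=1$, and recognise $x_{1,2}^{m_e-1}[\mathcal{A}_{e^{+}}\setminus e]+x_{1,2}^{m_e-1}[\mathcal{A}_{e^{-}}\setminus e]$ as the standard syzygy of $C_{G\setminus e}\otimes_R R_e$ coming from contracting $(v_1,v_2)$ when $m_e>1$. Your extra bookkeeping about the uniform orientation of parallel edges and the identification of $G/e$ with $(G\setminus e)/(v_1,v_2)$ merely makes explicit what the paper leaves implicit.
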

\begin{proof}
Note that, since  $\psi_1$ and $\phi_1$ are $R_e$-module maps, it suffices to prove the statement for the projection of the standard generating set of ${\rm CTopp}_{G/e}$ on ${\rm CTopp}_{G/e}/{\rm ker}(\psi_1)$. Consider an element $[\mathcal{A}]$ of the standard generating set of ${\rm CTopp}_{G/e}$. We use the same notation for its projection in ${\rm CTopp}_{G/e}/{\rm ker}(\psi_1)$ and consider $\psi_1([\mathcal{A}])$. By definition, $\psi_1([\mathcal{A}])=x_{1,2}^{m_e-1}[\mathcal{A}_{e^{+}}]+x_{1,2}^{m_e-1}[\mathcal{A}_{e^{-}}] \in {\rm CTopp}_G \otimes_R R_e$.  Hence, $\phi_1(\psi_1([\mathcal{A}]))=x_{1,2}^{m_e-1}\phi_1([\mathcal{A}_{e^{+}}])+x_{1,2}^{m_e-1}\phi_1([\mathcal{A}_{e^{-}}])$.

We have two cases: $m_e=1$ i.e., there is precisely one edge $e$ between $(v_1,v_2)$. Hence, $\phi_1(\psi_1([\mathcal{A}]))=\phi_1([\mathcal{A}_{e^{+}}])+\phi_1([\mathcal{A}_{e^{-}}])$.
 In this case,  $\phi_1([\mathcal{A}_{e^{+}}])=\phi_1([\mathcal{A}_{e^{-}}])$ since $\mathcal{A}_{e^{+}} \setminus e=\mathcal{A}_{e^{-}} \setminus e$ and hence,  $\phi_1(\psi_1([\mathcal{A}]))=0$ (note that $\mathbb{K}$ has characteristic two).
 Suppose that $m_e>1$ i.e., there are multiple edges between $(v_1,v_2)$. In this case, $\phi_1([\mathcal{A}_{e^{+}}])=[\mathcal{A}_{e^{+}} \setminus e]$ and $\phi_1([\mathcal{A}_{e^{-}}])=[\mathcal{A}_{e^{-}} \setminus e]$. Note that
  $x_{1,2}^{m_e-1}[\mathcal{A}_{e^{+}} \setminus e] +x_{1,2}^{m_e-1}[\mathcal{A}_{e^{-}} \setminus e]$ is a standard syzygy of ${\rm CTopp}_{G \setminus e} \otimes_R R_e$: the standard syzygy corresponding to the acyclic orientation induced by $\mathcal{A}$ on the contraction of $G \setminus e$ by $(v_1,v_2)$, which in turn is $G/(v_1,v_2)$. Hence, $\phi_1(\psi_1([\mathcal{A}]))=x_{1,2}^{m_e-1}\phi_1([\mathcal{A}_{e^{+}}])+x_{1,2}^{m_e-1}\phi_1([\mathcal{A}_{e^{-}}])=0$.  \end{proof}

\subsection{Exactness}
We show that the toppling Tutte complex in Theorem \ref{Tutteses_theo}  is exact in every homological degree.  Since, by construction, the map $\psi_1: {\rm CTopp}_{G/e}/{\rm ker}(\psi_1) \rightarrow {\rm CTopp}_{G} \otimes_R R_e$ is injective by construction, the Tutte complex  is exact in homological degree zero. We are left with showing the exactness in homological degrees one and two. They are handled in the following two propositions.

\begin{proposition}
The map $\phi_1$ is surjective. Hence, the toppling Tutte complex is exact in homological degree two. 
\end{proposition}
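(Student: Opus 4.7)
The plan is to exhibit an explicit preimage for every element of a generating set. Since $\phi_1$ is an $R_e$-module map, it suffices to show that every standard generator of $C_{G \setminus e} \otimes_R R_e$ lies in the image. These generators are the classes $[\mathcal{B}]$ indexed by acyclic orientations $\mathcal{B}$ on $G \setminus e$ with a unique sink at $v_2$ (using the canonical representative described in Subsection \ref{freepres_subsect}). For each such $\mathcal{B}$ I would construct a lift $\mathcal{A'}$ on $G$ by agreeing with $\mathcal{B}$ on every edge of $G \setminus e$ and orienting each of the $m_e$ parallel copies of $e$ from $v_1$ to $v_2$. Then $\mathcal{A'} \setminus e = \mathcal{B}$ by construction, so provided $\mathcal{A'}$ is acyclic with unique sink at $v_2$, we obtain $\phi_1([\mathcal{A'}]) = [\mathcal{A'} \setminus e] = [\mathcal{B}]$ and surjectivity follows.

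The verification of the two properties of $\mathcal{A'}$ is routine. For acyclicity, any directed cycle in $\mathcal{A'}$ would have to traverse one of the newly oriented $v_1 \to v_2$ edges (since $\mathcal{B}$ itself is acyclic), and closing such a cycle would require a directed path from $v_2$ back to $v_1$ inside $\mathcal{B}$; no such path exists because $v_2$ is a sink of $\mathcal{B}$. For the unique-sink property, $v_2$ remains a sink in $\mathcal{A'}$ because we have only added further edges incoming to $v_2$. For any other vertex $v \neq v_2$, uniqueness of $v_2$ as a sink in $\mathcal{B}$ forces $v$ to have an outgoing edge in $\mathcal{B}$, which persists in $\mathcal{A'}$; in particular, this applies to $v_1$, which has at least one neighbor in $G \setminus e$ because $G \setminus e$ is connected (as $e$ is not a bridge) and has $n \geq 3$ vertices.

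I do not expect a main obstacle here: the hypothesis that $e$ is not a bridge is used precisely to guarantee that $v_1$ has a neighbor in $G \setminus e$, and once $\mathcal{A'}$ is constructed, the acyclicity and unique-sink conditions follow from the sink property of $v_2$ in $\mathcal{B}$. The construction is independent of whether $m_e = 1$ or $m_e > 1$, since orienting all copies of $e$ from $v_1$ to $v_2$ works uniformly.
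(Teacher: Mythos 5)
Your proposal is correct and follows essentially the same route as the paper: lift each standard generator of $C_{G \setminus e} \otimes_R R_e$ by orienting every copy of $e$ from $v_1$ to $v_2$ and observe that $\phi_1$ sends the resulting class back to the generator. Your extra verifications of acyclicity and the unique-sink property (the latter not strictly needed, since the generators are equivalence classes of acyclic orientations) only make the paper's brief argument more explicit.
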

\begin{proof}
It suffices to prove that every element in the standard generating set of ${\rm CTopp}_{G \setminus e} \otimes_R R_e$ is in the image, under the map $\phi_1$, of some element in  ${\rm CTopp}_{G} \otimes_R R_e$.  To see this, note that any equivalence class of acyclic orientations on $G \setminus e$ has an acyclic orientation $\mathcal{A'}\setminus e$ with a unique sink at $v_2$. This acyclic orientation $\mathcal{A'}\setminus e$  can be extended to an acyclic orientation $\mathcal{A'}$ on $G$ by further orienting $e$ so that the source is $v_1$. By construction, $\phi_1([\mathcal{A'}])=[\mathcal{A'} \setminus e]$.  Hence, $\phi_1$ is surjective.
\end{proof}

\begin{proposition}\label{toppzeroh1_prop}
The kernel of $\phi_1$ is equal to the image of $\psi_1$. In other words, the toppling Tutte complex is exact in homological degree one. 
\end{proposition}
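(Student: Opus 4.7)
\noindent\textit{Proof plan.}
The complex property just established gives ${\rm im}(\psi_1) \subseteq {\rm ker}(\phi_1)$, so the plan is to prove the reverse inclusion. Take $\alpha = \sum_{[\mathcal{A'}]} p_{[\mathcal{A'}]}[\mathcal{A'}] \in {\rm ker}(\phi_1)$, expanded in the standard generating set of $C_G \otimes_R R_e$. Since $\phi_1(\alpha) = 0$ in $C_{G\setminus e}\otimes_R R_e$, grouping coefficients by equivalence class exhibits $\tilde\phi_1(\alpha)$ as a syzygy of the standard generating set of $C_{G\setminus e}\otimes_R R_e$ (here $\tilde\phi_1$ denotes the natural lift of $\phi_1$ to the free modules on the standard generators), which by Subsection~\ref{freepres_subsect} admits an expansion
\[
\tilde\phi_1(\alpha) = \sum_{(\mathcal{C}, (v_i, v_j))} r_{(\mathcal{C}, (v_i, v_j))} \cdot s^{G\setminus e}_{(\mathcal{C}, (v_i, v_j))}.
\]

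The key combinatorial step is to match each standard syzygy of $C_{G\setminus e}\otimes_R R_e$ with either a standard syzygy of $C_G \otimes_R R_e$ or a value of $\psi_1$. For each pair $(v_i, v_j) \neq (v_1, v_2)$, I would extend $\mathcal{C}$ (an acyclic orientation on $(G\setminus e)/(v_i, v_j)$) to an acyclic orientation $\tilde{\mathcal{C}}$ on $G/(v_i, v_j)$ by choosing an orientation of $e$ that preserves acyclicity; the corresponding standard syzygy $s^G_{(\tilde{\mathcal{C}}, (v_i, v_j))}$ of $C_G \otimes_R R_e$ then maps under $\tilde\phi_1$ exactly to $s^{G\setminus e}_{(\mathcal{C}, (v_i, v_j))}$. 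For $(v_i, v_j) = (v_1, v_2)$ (which occurs only when $m_e \geq 2$), identifying orientations on $(G\setminus e)/(v_1, v_2)$ with orientations on $G/e$ via the non-loop edges and choosing compatible representatives of $\mathcal{C}_{e^\pm}$ (as permitted by Lemma~\ref{psican_lem}) should give $s^{G\setminus e}_{(\mathcal{C}, (v_1, v_2))} = \tilde\phi_1(\psi_1([\mathcal{C}]))$. Setting $\beta$ equal to $\alpha$ minus the corresponding combination of lifted standard syzygies of $C_G$ minus $\sum_{\mathcal{C}} r_{(\mathcal{C}, (v_1, v_2))} \cdot \psi_1([\mathcal{C}])$, we have $\bar\beta = \bar\alpha - \sum r_{(\mathcal{C}, (v_1,v_2))} \cdot \psi_1([\mathcal{C}])$ in $C_G \otimes_R R_e$ (standard syzygies vanish there), while $\tilde\phi_1(\beta) = 0$ at the level of formal sums; equivalently, the coefficients of $\beta$ sum to zero over each fiber $\phi_1^{-1}([\mathcal{B}])$.

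The final step is to deduce $\bar\beta \in {\rm im}(\psi_1)$, from which $\bar\alpha \in {\rm im}(\psi_1)$ follows. Here the characteristic-two hypothesis enters crucially: the fibers of $\phi_1$ on the standard generators decompose into pairs of the form $\{[\mathcal{A}_{e^+}], [\mathcal{A}_{e^-}]\}$ indexed by acyclic orientations $\mathcal{A}$ on $G/e$, and in characteristic two the sum-zero constraint forces the two coefficients within each pair to agree, producing a contribution $p\,([\mathcal{A}_{e^+}] + [\mathcal{A}_{e^-}])$. When $m_e = 1$ this contribution equals $p \cdot \psi_1([\mathcal{A}])$ directly; when $m_e \geq 2$, the needed divisibility of $p$ by $x_{1,2}^{m_e-1}$ will come from the standard syzygies of $C_G \otimes_R R_e$ for the pair $(v_1, v_2)$, which are of the form $x_{1,2} \cdot \psi_1([\mathcal{A}])$ and vanish in $C_G \otimes_R R_e$. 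The hard part will be establishing this fiber pairing together with the required divisibility, especially in the $m_e \geq 2$ case.
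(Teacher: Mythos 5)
Your overall skeleton matches the paper's: expand a kernel element in the standard generators, observe that its image is a syzygy of $C_{G\setminus e}\otimes_R R_e$, clear out the standard syzygies of $C_{G\setminus e}\otimes_R R_e$ that lift to standard syzygies of $C_G\otimes_R R_e$ (for pairs $(v_i,v_j)\neq(v_1,v_2)$), and reduce to analysing, fiber by fiber, elements whose coefficients sum to zero over each fiber of $\phi_1$ on standard generators. Up to that point your argument is essentially the paper's. But the step you rely on next — and which you yourself flag as ``the hard part'' — is false as stated, and it is precisely where the real content of the proposition lies. The fibers of $\phi_1$ on standard generators do \emph{not} decompose into disjoint pairs $\{[\mathcal{A}_{e^{+}}],[\mathcal{A}_{e^{-}}]\}$ indexed by acyclic orientations $\mathcal{A}$ on $G/e$: distinct such pairs can share a class, and a fiber can contain classes $[\mathcal{B''}_{e^{+}}]$ whose opposite extension $\mathcal{B''}_{e^{-}}$ is not even acyclic. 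The paper's own Example \ref{fourcycletutte_ex} (the four cycle, $m_e=1$) is a counterexample to your mechanism: the fiber over $[\mathcal{C}]$ is $\{[\mathcal{B}_1],[\mathcal{B}_2],[\mathcal{B}_3]\}$, the $\psi_1$-pairs are $\{[\mathcal{B}_1],[\mathcal{B}_2]\}$ and $\{[\mathcal{B}_1],[\mathcal{B}_3]\}$, which overlap, and the sum-zero element $[\mathcal{B}_2]+[\mathcal{B}_3]$ is not supported on any single pair; it lies in ${\rm im}(\psi_1)$ only because it equals $\psi_1([\mathcal{A}_1])+\psi_1([\mathcal{A}_2])$ in characteristic two. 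So the constraint ``coefficients sum to zero over the fiber'' does not force coefficients to agree within pairs, and your argument does not reach such elements.

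What is actually needed (and what the paper proves) is the statement that for \emph{any} two classes in the same fiber, their sum lies in ${\rm im}(\psi_1)$. The paper establishes this by describing the fiber over $[\mathcal{A''}]$ as the set of classes $[\mathcal{B''}_{e^{\pm}}]$ over all acyclic orientations $\mathcal{B''}$ on $G\setminus e$ equivalent to $\mathcal{A''}$, and then inducting on the source–sink reversal distance $d(\mathcal{B}_1\setminus e,\mathcal{B}_2\setminus e)$ of Subsection \ref{equiv_subsect} (using Theorem \ref{acycequiv_theo}), with a case analysis in the inductive step according to whether the reversed source or sink is $v_1$, $v_2$, or neither; this is the missing idea in your proposal and it is not routine. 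As a smaller point, your treatment of $m_e\geq 2$ is more complicated than necessary: in that case $\phi_1$ is a bijection on standard generators, so every fiber is a singleton and no pairing or extra divisibility argument is needed — the kernel is generated by elements mapping to the relevant standard syzygies coming from $G/(v_1,v_2)$, which are already of the form $x_{1,2}^{m_e-1}[\mathcal{A}_{e^{+}}]+x_{1,2}^{m_e-1}[\mathcal{A}_{e^{-}}]=\psi_1([\mathcal{A}])$.
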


\begin{proof}
By definition, the image of $\psi_1$ is equal to the submodule generated by $x_{1,2}^{m_e-1}[\mathcal{A}_{{e}^{+}}]+x_{1,2}^{m_e-1}[\mathcal{A}_{{e}^{-}}]$ over all the standard generators $[\mathcal{A}]$ of ${\rm CTopp}_{G/e}/{\rm ker}(\psi_1)$ (the projection of the standard generating set of ${\rm CTopp}_{G/e}$ onto ${\rm CTopp}_{G/e}/{\rm ker}(\psi_1)$). 
 We show that this is also the kernel of $\phi_1$. 
 
Consider an element $b=\sum_{[\mathcal{A}]}p_{[\mathcal{A}]}[\mathcal{A}]  \in {\rm ker}(\phi_1)$. Since, $\phi_1(b)=0$ it gives rise to a syzygy  in ${\rm CTopp}_{G \setminus e} \otimes_R R_e$ (possibly the trivial syzygy where the coefficient of each standard generator is zero). Hence, it can be written as an $R_e$-linear combination of the standard syzygies of ${\rm CTopp}_{G \setminus e}\otimes_R R_e$. Next, we compare the standard syzygies of ${\rm CTopp}_{G} \otimes_R R_e$ that are in the image of the standard syzygies of ${\rm CTopp}_{G \setminus e} \otimes_R R_e$. By clearing out these standard syzygies, we assume that  the syzygy corresponding to $\phi_1(b)$ is generated by the standard syzygies of ${\rm CTopp}_{G \setminus e} \otimes_R R_e$ that are not in the image of the standard syzygies of ${\rm CTopp}_G \otimes_R R_e$.  We refer to these as the relevant standard syzygies of  ${\rm CTopp}_{G \setminus e} \otimes_R R_e$. Furthermore, $b$ is generated by elements whose image with respect to $\phi_1$ is a relevant standard syzygy of ${\rm CTopp}_{G \setminus e} \otimes_R R_e$ and by sums (recall that $\mathbb{K}$ is characteristic two) of pairs of elements in a fiber over $\phi_1$ of any standard generator of ${\rm CTopp}_{G \setminus e} \otimes_R R_e$ (note that $\phi_1$ takes standard generators of ${\rm CTopp}_G \otimes_R R_e$ to standard generators of ${\rm CTopp}_{G \setminus e} \otimes_R R_e$).  To see this, consider $\phi_1(b)=\sum_{[\mathcal{A}]}p_{[\mathcal{A}]}\phi_1([\mathcal{A}])$ and collect coefficients of each standard generator $\phi_1([\mathcal{A}])$ of ${\rm CTopp}_{G \setminus e} \otimes_R R_e$. 
The sum of the terms corresponding to those  $\phi_1([\mathcal{A}])$ whose coefficient is non-zero is a syzygy of ${\rm CTopp}_{G \setminus e} \otimes_R R_e$. A simple calculation shows that for each $\phi_1([\mathcal{A}])$ whose coefficient is zero, the sum $\sum_{[\mathcal{A'}]}p_{[\mathcal{A'}]}\phi_1([\mathcal{A'}])$ of all $[\mathcal{A'}]$ that are mapped to $\phi_1([\mathcal{A}])$ by $\phi_1$ is generated by sums of pairs of elements in a fiber over $\phi_1$ of the standard generator $\phi_1([\mathcal{A}])$ of ${\rm CTopp}_{G \setminus e} \otimes_R R_e$. 
 
 We have two cases, if the multiplicity $m_e$ of the pair $(v_1,v_2)$ is precisely one i.e., there is precisely one edge between $(v_1,v_2)$. In this case, there are  no relevant standard syzygies of ${\rm CTopp}_{G \setminus e} \otimes_R R_e$. Consider two elements $[\mathcal{A}_1]$ and $[\mathcal{A}_2]$ in the fiber over $\phi_1$  of some standard generator $[\mathcal{A''}]$ of ${\rm CTopp}_{G \setminus e} \otimes_R R_e$. We show that  $[\mathcal{A}_1]+[\mathcal{A}_2]$ is the image of $\psi_1$. To show this, we consider the fiber over the map $\phi_1$ of a standard generator $[\mathcal{A''}]$ of ${\rm CTopp}_{G \setminus e} \otimes_ R R_e$.  This can be described as follows: consider all acyclic orientations $\mathcal{B''}$ on $G \setminus e$ that are equivalent to $\mathcal{A''}$ (see Subsection \ref{equiv_subsect}) and take the union of $[\mathcal{B''}_{e^{+}}]$ and $[\mathcal{B''}_{e^{-}}]$  for each orientation of the form  $\mathcal{B''}_{e^{+}}$ and $\mathcal{B''}_{e^{-}}$ on $G$ obtained from $\mathcal{B''}$ by further orienting $e$ such that $v_1$ and $v_2$ is the source respectively that is  acyclic.
  
 Consider the sum $[\mathcal{B}_1]+[\mathcal{B}_2]$ of any two elements in the fiber over $\phi_1$ of $[\mathcal{A''}]$. This means that the acyclic orientations $\mathcal{B}_1 \setminus e$ and $\mathcal{B}_2 \setminus e$ on $G \setminus e$ are equivalent.  By \cite{Mos72,Bac17}, there is a  source-sink reversal sequence transforming  $\mathcal{B}_1\setminus e$ to $\mathcal{B}_2 \setminus e$. Taking this into account, we perform an induction on the distance $d$ between the acyclic orientations $\mathcal{B}_1'':=\mathcal{B}_1 \setminus e$ and $\mathcal{B}_2'':=\mathcal{B}_2 \setminus e$ (recall the notion of distance from the Preliminaries, end of Subsection \ref{equiv_subsect}).  The base case corresponds to the distance between $\mathcal{B}_1''$ and $\mathcal{B}_2''$ being zero i.e., $\mathcal{B}_1''=\mathcal{B}_2''$.  In this case, $[(\mathcal{B}_1''])_{e^{+}}]+[(\mathcal{B}_1'')_{e^{-}}]$  is in the image of $\psi_1$ on the standard generator corresponding to the acyclic orientation on $G/e$ induced by $\mathcal{B}_1''$.  Note that since both $(\mathcal{B}_1'')_{e^{+}}$ and $(\mathcal{B}_1'')_{e^{-}}$ are acyclic orientations on $G$, the orientation $(\mathcal{B}_1'')_{e^{+}}/e=(\mathcal{B}_1'')_{e^{-}}/e$ is an acyclic orientation on $G/e$.
 
The induction hypothesis is that elements of the form $[(\mathcal{B''}_1)_{e^{\pm}}]+[(\mathcal{B''}_2)_{e^{\pm}}]$ in ${\rm CTopp}_G \otimes_R R_e$, where $\mathcal{B}_1''$ and $\mathcal{B}_2''$ are acyclic orientations on $G \setminus e$ that are equivalent and at a distance at most $d$ is in the image of $\psi_1$  for some non-negative integer $d$. For the induction step, consider equivalent acyclic orientations  $\mathcal{B}_1''$ and $\mathcal{B}_2''$ that are at a distance $d+1$. There exists an acyclic orientation $\mathcal{B}_3''$ that is equivalent to both and $d(\mathcal{B}_1'',\mathcal{B}_3'')=d$ and $d(\mathcal{B}_3'',\mathcal{B}_2'')=1$. 
 By the induction hypothesis, the sum of any pair $[(\mathcal{B''}_1)_{e^{\pm}}]+[(\mathcal{B''}_3)_{e^{\pm}}]$ is in the image of $\psi_1$.  We are left with showing that $[(\mathcal{B''}_2)_{e^{\pm}}]+[(\mathcal{B''}_3)_{e^{\pm}}]$ is in the image of $\psi_1$.
 Since  $d(\mathcal{B}_2'',\mathcal{B}_3'')=1$, there is precisely one source-sink reversal that transforms $\mathcal{B}_3''$ to $\mathcal{B}_2''$. If the source or sink that is reversed in neither $v_1$ nor $v_2$, then this vertex will remain so in $({\mathcal{B}_3''})_{e^{+}}$ and $({\mathcal{B}_3''})_{e^{-}}$. This can be reversed to obtain $({\mathcal{B}_2''})_{e^{+}}$ and $({\mathcal{B}_2''})_{e^{-}}$ respectively. Hence,  $[({\mathcal{B}_3''})_{e^{+}}]=[({\mathcal{B}_2''})_{e^{+}}]$ and $[({\mathcal{B}_3''})_{e^{-}}]=[({\mathcal{B}_2''})_{e^{-}}]$. We conclude that any element of the form $[(\mathcal{B''}_2)_{e^{\pm}}]+[(\mathcal{B''}_3)_{e^{\pm}}]$ is in the image of $\psi_1$.
 
 Suppose that a sink is reversed and this is either $v_1$ or $v_2$,  $v_1$ say. We note that $(\mathcal{B}_3'')_{e^{-}}$ is equivalent to $(\mathcal{B}_2'')_{e^{+}}$ since $v_1$ will remain a sink in  $(\mathcal{B}_3'')_{e^{-}}$ and can be reversed to obtain  $(\mathcal{B}_2'')_{e^{+}}$.  Hence, $[(\mathcal{B}_2'')_{e^{+}}]=[(\mathcal{B}_3'')_{e^{-}}]$. Hence, we know that $[(\mathcal{B}_2'')_{e^{+}}]+[(\mathcal{B}_3'')_{e^{-}}]=0$, $[(\mathcal{B}_2'')_{e^{-}}]+[(\mathcal{B}_3'')_{e^{-}}]$ and $[(\mathcal{B}_2'')_{e^{+}}]+[(\mathcal{B}_3'')_{e^{+}}]$ are in the image of $\psi_1$. We conclude that the sum   $[(\mathcal{B}_2'')_{e^{-}}]+[(\mathcal{B}_3'')_{e^{+}}]=([(\mathcal{B}_2'')_{e^{-}}]+[(\mathcal{B}_2'')_{e^{+}}])+([(\mathcal{B}_3'')_{e^{-}}]+[(\mathcal{B}_3'')_{e^{+}}])$ is in the image of $\psi_1$.  

Similarly,  if a source is reversed and this is either $v_1$ or $v_2$, $v_1$ say, then  $(\mathcal{B''}_3)_{e^{+}}$ is equivalent to $(\mathcal{B''}_2)_{e^{-}}$. An analogous argument shows that the any pair $[(\mathcal{B''}_2)_{e^{\pm}}]+[(\mathcal{B''}_3)_{e^{\pm}}]$ is in the image of $\psi_1$. This completes the argument for the case $m_e=1$. We refer to Example \ref{fourcycletutte_ex} for the case of a four cycle.

Consider the case where $m_e>1$. The relevant standard syzygies of ${\rm CTopp}_{G \setminus e} \otimes_R R_e$ bijectively correspond to acyclic orientations on $G/(v_1,v_2)$.  The map $\phi_1$ induces a bijection between the standard generators of ${\rm CTopp}_G \otimes_R R_e$ and ${\rm CTopp}_{G \setminus e} \otimes_R R_e$.  Hence, each fiber over $\phi_1$ of the standard generators of ${\rm CTopp}_{G \setminus e} \otimes_R R_e$ has precisely one element.  Hence, the kernel of $\phi_1$ is generated by elements whose image is a relevant standard syzygy of ${\rm CTopp}_{G \setminus e} \otimes_R R_e$ and are of the form  $x_{1,2}^{m_e-1}[\mathcal{A}_{{e}^{+}}]+x_{1,2}^{m_e-1}[\mathcal{A}_{{e}^{-}}]$ over all acyclic orientations on $G/e$. Hence, the kernel of $\phi_1$ is equal to the image of $\psi_1$.  \end{proof}

\begin{example}\label{fourcycletutte_ex}
\rm
\begin{figure}
  \centering
  \includegraphics[width=10cm]{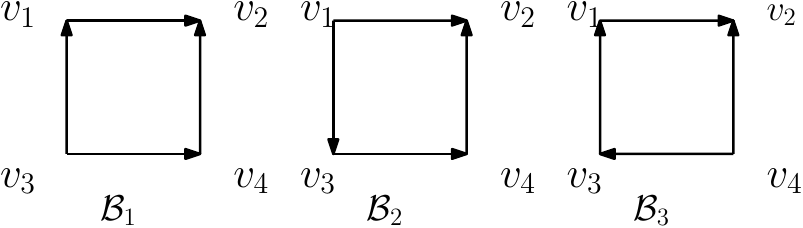}
\caption{Acyclic Orientations corresponding to the Minimal generators of the Toppling Critical Module of a Four Cycle}\label{ex2_fig}
\end{figure}

Let $G$ be the four cycle. It has three acyclic orientations $\mathcal{B}_1,\mathcal{B}_2,~\mathcal{B}_3$ with a unique sink at $v_2$ shown in Figure \ref{ex2_fig}.  Let $e=(v_1,v_2)$. The graph $G/e$ is the three cycle and has two acyclic orientations $\mathcal{A}_1:=\mathcal{B}_1/e,~\mathcal{A}_2:=\mathcal{B}_2/e$ with a unique sink at $v_2$  and $G \setminus e$ is a tree with  one acyclic orientation $\mathcal{C}$ with a unique sink at $v_2$.  The map $\psi_1$ is as follows:  
\begin{center}
$\psi_1([\mathcal{A}_1])=[\mathcal{B}_1]+[\mathcal{B}_2]$,\\
 $\psi_1([\mathcal{A}_2])=[\mathcal{B}_1]+[\mathcal{B}_3]$. \\
\end{center}
The map $\phi_1$ is as follows:

\begin{center}
$\phi_1([\mathcal{B}_1])=\phi_1([\mathcal{B}_2])=\phi_1([\mathcal{B}_3])=[\mathcal{C}]$.   
\end{center}
The element $[\mathcal{B}_2]+[\mathcal{B}_3]$ is in the kernel of $\phi_1$. It is however not an image of $\psi_1$ on the standard generators of ${\rm CTopp}_{G/e} \otimes_R R_e$ but is the image of $[\mathcal{A}_1]+[\mathcal{A}_2]$. The core of the proof of Proposition \ref{toppzeroh1_prop} is to generalise this.  \qed
\end{example}








\section{Applications}

\subsection{Merino's Theorem}

We obtain Merino's theorem as a corollary to Theorem \ref{GparkTutte_theo}. The main remaining step is to show that $x_1-x_2$ is a non-zero divisor on ${\rm CPark}_G$ and ${\rm CPark}_{G \setminus e}$, and that $x_{1,2}$ is a non-zero divisor on ${\rm CPark}_{G/e}$. This is handled by the following propositions.

\begin{proposition}\label{nonzero1-prop} 
The element $x_1-x_2$ is a non-zero divisor on ${\rm CPark}_G$ and ${\rm CPark}_{G \setminus e}$. 
\end{proposition}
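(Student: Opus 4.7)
The plan is to reduce the claim, via the Cohen--Macaulay property, to the statement that $x_1-x_2$ avoids every associated prime of $R/M_G$ (respectively $R/M_{G\setminus e}$), and then to identify those associated primes explicitly using the fact that the sink variable $x_2$ does not appear in $M_G$. Concretely, since $R/M_G$ is Cohen--Macaulay of Krull dimension one, its associated primes coincide with its minimal primes; moreover the canonical module $\omega_{R/M_G}$ of a Cohen--Macaulay ring has the same associated primes as the ring itself, and twisting by $\ell$ does not change associated primes. Hence it is enough to show that $x_1-x_2$ lies in no minimal prime of $R/M_G$, and similarly for $R/M_{G\setminus e}$.

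Next I would determine these minimal primes. By construction, each generator $m_S=\prod_{v_j\in S}x_j^{\deg_S(v_j)}$ of $M_G$ is a monomial in the variables $\{x_i : i\neq 2\}$, so $M_G$ lies entirely inside the subring $\mathbb{K}[x_i : i\neq 2]\subset R$. Writing $M'_G$ for the classical $G$-parking ideal in that subring, one has the isomorphism
\begin{equation*}
R/M_G \;\cong\; \bigl(\mathbb{K}[x_i : i\neq 2]/M'_G\bigr)\otimes_{\mathbb{K}}\mathbb{K}[x_2],
\end{equation*}
and the first tensor factor is the well-known Artinian $G$-parking function quotient whose unique associated prime is the maximal graded ideal $\langle x_i : i\neq 2\rangle$. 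Consequently $R/M_G$ has the unique minimal prime $\mathfrak{p}=\langle x_i : i\neq 2\rangle$, and this is also the unique associated prime of the canonical module.

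To conclude, note that for any monomial prime $\mathfrak{p}$, containment $x_1-x_2\in\mathfrak{p}$ forces $x_1,x_2\in\mathfrak{p}$; since $x_2\notin\mathfrak{p}=\langle x_i:i\neq 2\rangle$, we get $x_1-x_2\notin\mathfrak{p}$. Hence $x_1-x_2$ is a non-zero divisor on $R/M_G$ and therefore on ${\rm GC}_G=\omega_{R/M_G}(\ell)$. For ${\rm GC}_{G\setminus e}$ the argument is identical: because $e$ is not a bridge, $G\setminus e$ is connected, one may keep $v_2$ as the sink, and the sink variable is again absent from $M_{G\setminus e}$, so the same associated-prime analysis applies verbatim.

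There is no substantial obstacle here: the proof is a matter of combining two standard facts (associated primes are preserved by passage to the canonical module of a Cohen--Macaulay ring, and a monomial prime contains the binomial $x_1-x_2$ only if it contains both $x_1$ and $x_2$) with the structural observation that the sink variable never appears in $M_G$. The only small point to justify carefully is the equality ${\rm Ass}(\omega_{R/M_G})={\rm Ass}(R/M_G)$, which follows from the Cohen--Macaulay property via the standard description of associated primes of canonical modules.
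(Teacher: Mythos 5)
Your proof is correct, but its key step is organized differently from the paper's. The paper reduces to $R/M_G$ via \cite[Proposition 3.3.3]{BruHer98} and then argues softly: $x_2$ is a non-zero divisor because no minimal generator of $M_G$ (resp.\ $M_{G\setminus e}$) involves the sink variable, and since these are monomial ideals, any associated prime containing the linear form $x_1-x_2$ would have to contain both $x_1$ and $x_2$, so $x_1-x_2$ being a zero divisor would force $x_2$ to be one as well --- a contradiction. The paper therefore never needs to know how many associated primes there are or what they look like. You instead pin down the primary structure completely: the decomposition $R/M_G\cong(\mathbb{K}[x_i:i\neq2]/M'_G)\otimes_{\mathbb{K}}\mathbb{K}[x_2]$ with Artinian first factor shows that $\langle x_i:i\neq2\rangle$ is the unique minimal (hence, by Cohen--Macaulayness, unique associated) prime, of both the ring and its canonical module, and $x_1-x_2$ visibly avoids it. Both arguments ultimately rest on the same observation that the sink variable is absent from the monomial generators, but yours buys more precise structural information (the exact associated prime, and the identification of the reduced quotient with $\mathbb{K}[x_2]$), at the cost of invoking Artinianness of the parking-function quotient --- i.e.\ finiteness of $G$-parking functions, which for $G\setminus e$ uses connectedness, guaranteed here by the non-bridge hypothesis --- and the standard fact $\operatorname{Ass}(\omega_{R/M_G})=\operatorname{Min}(R/M_G)$, whereas the paper's route is shorter and needs only the monomial-prime dichotomy together with the regularity of $x_2$.
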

\begin{proof}
First, we note that  it suffices to show that  $x_1-x_2$ is a non-zero divisor of $R/M_G$ and $R/M_{G \setminus e}$ (\cite[Sections 3.3 and 3.6 ]{BruHer98}). To see this note that $x_2$ is a non-zero divisor of $R/M_G$ and $R/M_{G \setminus e}$ since the vertex $v_2$ is the sink and the ideals $M_G$ and $M_{G \setminus e}$ are generated by monomials each of which is not divisible by $x_2$.  We conclude the proof by noting that if $x_1-x_2$ is a zero divisor on $R/M_G$ (or $R/M_{G \setminus e}$), then both $x_1$ and $x_2$ are zero divisors (since $M_G$ and $M_{G \setminus e}$ are monomial ideals). This yields the required contradiction.
                                                                                                                                                                                                                                                                                                                                                                                                                                                                                                                                                                                                                                                                                                                                                                                                                                                                                                                                                                   \end{proof}

\begin{proposition}\label{nonzero2-prop}
The element $x_{1,2}$ is a non-zero divisor on ${\rm CPark}_{G/e}$.
\end{proposition}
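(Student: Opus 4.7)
The plan is to mirror the proof of Proposition \ref{nonzero1-prop} almost verbatim. Since ${\rm GC}_{G/e}$ is by definition (up to the twist by $\ell$) the canonical module of the one-dimensional Cohen--Macaulay ring $R_e/M_{G/e}$, the cited result \cite[Proposition 3.3.3]{BruHer98} guarantees that the non-zero divisors on ${\rm GC}_{G/e}$ coincide with the non-zero divisors on $R_e/M_{G/e}$. Hence the problem reduces to showing that $x_{1,2}$ is a non-zero divisor on $R_e/M_{G/e}$.

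For the latter I will exploit the fact that the sink of $G/e$ was fixed to be $v_{1,2}$. By the definition of the $G$-parking function ideal recalled in Subsection \ref{gparktopppre_subsect}, $M_{G/e}$ is generated by the monomials $m_S = \prod_{u \in S} x_u^{\deg_S(u)}$ with $S \subseteq V(G/e) \setminus \{v_{1,2}\}$, so in particular the variable $x_{1,2}$ does not appear in any minimal generator of $M_{G/e}$. Because $M_{G/e}$ is a monomial ideal, this already forces $x_{1,2}$ to be a non-zero divisor on $R_e/M_{G/e}$: if $x_{1,2}\cdot p \in M_{G/e}$, then every monomial appearing in $x_{1,2}\, p$ must be divisible by one of the generators of $M_{G/e}$, and since these generators live in the subring $\mathbb{K}[x_3,\dots,x_n]$, the corresponding divisibility descends to every monomial of $p$, yielding $p \in M_{G/e}$.

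No serious obstacle arises here; structurally the argument is identical to Proposition \ref{nonzero1-prop}, with the small simplification that $x_{1,2}$ is already a single variable rather than a difference of two, so one does not need the extra step of observing that a zero-divisor linear form would force both its component variables to be zero divisors. Combining Proposition \ref{nonzero1-prop}, this proposition, and the additivity of Hilbert series in the short exact sequence of Theorem \ref{GparkTutte_theo} then yields Merino's theorem as advertised.
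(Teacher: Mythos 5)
Your proposal is correct and is essentially the paper's own argument: reduce via \cite[Proposition 3.3.3]{BruHer98} to the statement that $x_{1,2}$ is a non-zero divisor on $R_e/M_{G/e}$, and then observe that since $v_{1,2}$ is the sink, no generator $m_S$ of $M_{G/e}$ involves $x_{1,2}$, so multiplication by $x_{1,2}$ is injective on the quotient by this monomial ideal. The extra detail you give about monomial divisibility is just an explicit spelling-out of the same step the paper leaves implicit.
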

\begin{proof}
From \cite[Proposition 3.3.3]{BruHer98}, it suffices to show that  $x_{1,2}$ is a non-zero divisor of $R/M_{G/e}$. This follows from the fact that $v_{1,2}$ is the sink for $M_{G/e}$ and hence, $M_{G/e}$ is generated by monomials each of which is not divisible by $x_{1,2}$.
\end{proof}

We are now ready to deduce Merino's theorem as a corollary. 

\begin{corollary} {\rm({\bf Merino's Theorem})}
The $K$-polynomial of ${\rm CPark}_G$  is the Tutte evaluation $T_G(1,t)$, where $T_G(x,y)$ is the Tutte polynomial of $G$. 
\end{corollary}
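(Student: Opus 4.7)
The plan is to prove Merino's theorem by induction on the number of edges of $G$. In the base case, every edge of $G$ is a bridge or a loop, so $G$ is a tree on $n$ vertices with $\ell$ loops attached, and $T_G(x, y) = x^{n-1} y^\ell$ gives $T_G(1, t) = t^\ell$. A direct computation (peeling leaves of the tree) shows that the $G$-parking ideal equals $M_G = \langle x_i : i \neq 2 \rangle$, so $R/M_G \cong \mathbb{K}[x_2]$ is a one-dimensional Gorenstein ring, and the twist by $\ell$ in the definition of ${\rm GC}_G$ makes the $K$-polynomial equal to $t^\ell$, matching $T_G(1, t)$.

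For the inductive step, pick any non-bridge edge $e$ of $G$; necessarily $e$ connects distinct vertices $v_1, v_2$ and so is not a loop. Theorem \ref{GparkTutte_theo} then yields a short exact sequence of graded $R_e$-modules
$$0 \rightarrow {\rm GC}_{G/e}/(x_{1,2} \cdot {\rm GC}_{G/e}) \xrightarrow{\psi_0} {\rm GC}_G \otimes_R R_e \xrightarrow{\phi_0} {\rm GC}_{G \setminus e} \otimes_R R_e \rightarrow 0.$$
By Propositions \ref{nonzero1-prop} and \ref{nonzero2-prop} the forms $x_1 - x_2$ and $x_{1,2}$ are non-zero divisors on ${\rm GC}_G$, ${\rm GC}_{G \setminus e}$, and ${\rm GC}_{G/e}$, each of which is Cohen-Macaulay of Krull dimension one. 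Quotienting a one-dimensional Cohen-Macaulay module by a non-zero divisor cancels exactly one factor of $(1-t)$ from the denominator of its Hilbert series, so the three terms in the sequence have Hilbert series $K({\rm GC}_{G/e}; t)$, $K({\rm GC}_G; t)$, and $K({\rm GC}_{G \setminus e}; t)$ respectively.

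Additivity of Hilbert series along a short exact sequence then gives
$$K({\rm GC}_G; t) \;=\; K({\rm GC}_{G/e}; t) + K({\rm GC}_{G \setminus e}; t).$$
By the inductive hypothesis applied to $G/e$ and $G \setminus e$ (both of which have strictly fewer edges than $G$), the right-hand side equals $T_{G/e}(1, t) + T_{G \setminus e}(1, t)$, which by the Tutte deletion-contraction formula (valid since $e$ is neither a bridge nor a loop) is $T_G(1, t)$. This completes the induction.

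The main obstacle is largely bookkeeping: one must check in the base case that the twist by $\ell$ in the definition of ${\rm GC}_G$ truly produces the factor $t^\ell$ in the $K$-polynomial (with careful attention to the canonical module convention), and one must confirm that the degree shift by $x_{1,2}^{m_e-1}$ built into $\psi_0$ is exactly compensated by the extra $m_e - 1$ loops acquired when contracting a multi-edge, so that the sequence above really is an exact sequence of graded modules with no further degree-shift corrections needed in the Hilbert-series additivity step.
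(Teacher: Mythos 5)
Your overall strategy is the same as the paper's: induct via deletion--contraction, use Propositions \ref{nonzero1-prop} and \ref{nonzero2-prop} to identify the Hilbert series of the three terms in the sequence of Theorem \ref{GparkTutte_theo} with the $K$-polynomials of ${\rm GC}_G$, ${\rm GC}_{G/e}$, ${\rm GC}_{G\setminus e}$, and invoke additivity of Hilbert series together with $T_G(1,t)=T_{G/e}(1,t)+T_{G\setminus e}(1,t)$. However, there is a genuine gap in your induction: your only base case is a tree with loops attached, i.e.\ a graph in which every edge is a bridge or a loop. This misses the graphs on two vertices joined by $m\geq 2$ parallel edges (possibly with loops). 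For such a ``banana'' graph no edge between the two vertices is a bridge or a loop, so it does not fall under your base case; but your inductive step cannot be applied to it either, because Theorem \ref{GparkTutte_theo} is stated only for graphs with at least three vertices. These graphs arise unavoidably in the recursion: already for $G=K_3$, the contraction $G/e$ is the two-vertex graph with two parallel edges, so your induction hypothesis is being invoked on a graph your argument never establishes. The paper closes exactly this hole by treating the two-vertex multigraph as a second base case, computing its $K$-polynomial directly as $t^{\ell+m-1}+\cdots+t^{\ell}$ (using that ${\rm GC}_G$ is Gorenstein there) and matching it with $T_G(1,t)=t^{\ell+m-1}+\cdots+t^{\ell}$. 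You would need to add this verification (or extend the short exact sequence to two-vertex graphs, which is not available in the paper) for your induction to be complete.

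Two smaller points. First, in your inductive step you write ``pick any non-bridge edge $e$; necessarily $e$ connects distinct vertices and so is not a loop'' --- this is not right as stated, since loops are never bridges; you should instead pick an edge that is neither a bridge nor a loop, which exists precisely because $G$ is not in your (tree-plus-loops) base case. Second, your base-case computation for trees ($M_G=\langle x_i : i\neq 2\rangle$, so $R/M_G\cong \mathbb{K}[x_2]$ and the $K$-polynomial of ${\rm GC}_G$ is $t^{\ell}$) is consistent with the paper's grading conventions, and your worry about the shift coming from $x_{1,2}^{m_e-1}$ in $\psi_0$ is indeed only bookkeeping: the sequence in Theorem \ref{GparkTutte_theo} is already a sequence of graded modules, so Hilbert-series additivity needs no further degree corrections.
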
 

\begin{proof}

We verify the base cases first. It consists of trees with $n$ vertices and $\ell$ loops in total and graph on two vertices with $m$ multiple edges and $\ell$ loops in total (recall that Theorem \ref{GparkTutte_theo} requires $G$ to have at least three vertices). In the first case, the $K$-polynomial of  ${\rm CPark}_G$ is $t^{\ell}$. On the other hand, the Tutte polynomial $T_G(x,y)$ is $x^{n-1}y^{\ell}$. In this case, we verify that $T_G(1,t)=t^{\ell}$.  In the second case, the $K$-polynomial $K_{{\rm CPark}_G}(t)$ of  ${\rm CPark}_G$ is $t^{\ell+m-1}+t^{\ell+m-2}+\cdots+t^{\ell}$ (use the fact that ${\rm CPark}_G$ is Gorenstein). The Tutte polynomial $T_G(x,y)$ is $y^{\ell+m-1}+y^{\ell+m-2}+\dots+y^{\ell +1}+x \cdot y^{\ell}$ and hence, $T_G(1,t)=K_{{\rm CPark}_G}(t)$. 

Since the Hilbert series is additive under short exact sequence of graded modules, we obtain the following equation from Theorem \ref{GparkTutte_theo}:
\begin{center}
${\rm Hil}_{{\rm CPark}_G \otimes_R R_e}(t)={\rm Hil}_{{\rm CPark}_{G/e}/(x_{1,2} \cdot {\rm CPark}_{G/e})}(t)+{\rm Hil}_{{\rm CPark}_{G \setminus e} \otimes_R R_e}(t)$.
\end{center}
Using the fact that the $G$-parking critical module has Krull dimension one and Propositions \ref{nonzero1-prop}, \ref{nonzero2-prop}, we conclude that 
\begin{center}
${\rm Hil}_{{\rm CPark}_G \otimes_R R_e}(t)=K_{{\rm CPark}_G}(t),{\rm Hil}_{{\rm CPark}_{G/e}/(x_{1,2} \cdot {\rm CPark}_{G/e})}(t)=K_{{\rm CPark}_{G/e}}(t)$ and ${\rm Hil}_{{\rm CPark}_{G \setminus e} \otimes_R R_e}(t)=K_{{\rm CPark}_{G \setminus e}}(t)$.
\end{center}
 Hence we obtain:

\begin{center}
$K_{{\rm CPark}_G}(t)=K_{{\rm CPark}_{G/e}}(t)+K_{{\rm CPark}_{G \setminus e}}(t)$.
\end{center}
This completes the proof of Merino's theorem. \end{proof}



\begin{remark}\label{hileq_rem}
\rm Note that the Hilbert series and hence, the $K$-polynomials of ${\rm CPark}_G$ and ${\rm CTopp}_G$ are equal.  To see this, note that $R/I_G$ and $R/M_G$ have the same Hilbert function \cite{ManStu13} and hence, their canonical modules share the Hilbert function \cite[Corollary 4.3.8]{BruHer98}. 
\end{remark}


\subsection{Deletion-Contraction Formula for Alternating Numbers}

We first note that $\beta_{i,j}(H)$ as defined in the introduction is the $(i,j)$-th graded Betti number of the both the $G$-parking and the toppling critical module.

\begin{proposition}\label{critbetti_prop}
The number $\beta_{i,j}(H)$ is the $(i,j)$-th graded Betti number of the $G$-parking critical module ${\rm CPark}_H$ and the toppling critical module ${\rm CTopp}_H$.
\end{proposition}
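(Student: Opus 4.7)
The statement is essentially a direct translation of the main result of Manjunath--Schreyer--Wilmes \cite{ManSchWil15}, so the plan is to read off the graded Betti numbers of both critical modules from the explicit minimal free resolution constructed there, and then to verify that the combinatorial indexing and internal grading match the definition of $\beta_{i,j+\ell}(H)$ given in the introduction.

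In \cite{ManSchWil15}, the minimal free resolution of ${\rm GC}_H$ has its $i$-th free module indexed by pairs $(\mathcal{A}, P)$, where $P$ is a connected partition graph of $H$ of size $i$ (that is, the quotient of $H$ by a partition of its vertex set into $n-i$ blocks each of which induces a connected subgraph of $H$) and $\mathcal{A}$ is an acyclic orientation of $P$ with a unique sink at the block containing $v_2$. For $i=0$ and $i=1$ this specialises to the descriptions recalled in Subsections \ref{freepresgparkcan_sect} and \ref{freepres_subsect}; the higher homological degrees are the main content of the cited paper. The same combinatorial indexing resolves $C_H$, either because $M_H$ arises as the initial ideal of $I_H$ under a suitable generic term order and the corresponding Gr\"obner deformation preserves graded Betti numbers (so that the canonical modules ${\rm GC}_H = \omega_{R/M_H}(\ell)$ and $C_H = \omega_{R/I_H}(\ell)$ share the same graded Betti numbers), or by directly applying the Bayer--Sturmfels lattice-module framework reviewed in Section \ref{BayStu_sect}.

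To conclude, I would match the internal grading. If the partition graph $P$ has $m - j$ non-loop edges, then exactly $j$ of the original edges of $H$ were collapsed into loops, so $P$ carries $\ell + j$ loops in total; a direct computation with the canonical divisor $D_{\mathcal{A}} = \sum_v ({\rm outdeg}_{\mathcal{A}}(v) - 1)(v)$ attached to $\mathcal{A}$, together with the twist by $\ell$ built into the definition of the critical modules, shows that the basis element attached to $(\mathcal{A},P)$ sits in internal degree $j + \ell$. The resulting count matches the definition of $\beta_{i,j+\ell}(H)$ in the introduction, giving $\beta_{i,j+\ell}({\rm GC}_H) = \beta_{i,j+\ell}(C_H) = \beta_{i,j+\ell}(H)$. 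The only non-routine step is this degree bookkeeping, which requires carefully converting between the multigraded data recorded in \cite{ManSchWil15} and the single internal grading used here; once this is in hand, the proposition follows directly from the existence of the combinatorial resolution.
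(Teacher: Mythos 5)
Your proposal leans on \cite{ManSchWil15} in a way that misses the one real step in the paper's argument. What is proved in \cite{ManSchWil15} is a combinatorial description of the minimal free resolutions (hence graded Betti numbers) of the quotient rings $R/M_H$ and $R/I_H$, not of their canonical modules. The proposition is about ${\rm GC}_H=\omega_{R/M_H}(\ell)$ and $C_H=\omega_{R/I_H}(\ell)$, and the paper gets from one to the other by the standard duality for graded Cohen--Macaulay modules: since $R/M_H$ and $R/I_H$ are Cohen--Macaulay of codimension $n-1$, dualizing their minimal free resolutions produces minimal free resolutions of the canonical modules, so $\beta_{i,j}$ of the critical module equals $\beta_{n-1-i,\,c-j}$ of the quotient ring for the appropriate shift $c$; it is exactly this reversal of homological and internal degrees that turns the indexing in \cite{ManSchWil15} into the count ``acyclic orientations on connected partition graphs of size $i$ with $m-j$ edges and a unique sink.'' (Check this on the triangle: $R/M_G$ has Betti numbers $1,3,2$ while ${\rm GC}_G$ has $2,3,1$.) By asserting that \cite{ManSchWil15} already resolves ${\rm GC}_H$ with the $i$-th free module indexed by partition graphs of size $i$, you have assumed the conclusion of this duality step rather than supplied it.

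The second concrete problem is your first suggested route for matching ${\rm GC}_H$ with $C_H$: a Gr\"obner degeneration from $I_H$ to its initial ideal $M_H$ does \emph{not} in general preserve graded Betti numbers; it only gives the semicontinuity inequality $\beta_{i,j}(R/I_H)\le\beta_{i,j}(R/M_H)$. The equality of these Betti numbers is precisely a theorem of \cite{ManSchWil15}, so invoking ``deformation preserves Betti numbers'' as an independent justification is circular (and your alternative via Section \ref{BayStu_sect} is not developed enough to substitute, since that section only identifies $C_G$ with $\pi({\rm BSC}_G)$ and compares Hilbert series). The clean argument is the paper's: cite \cite{ManSchWil15} for the (identical) combinatorial Betti numbers of both $R/M_H$ and $R/I_H$, then apply canonical-module duality once to transfer them to ${\rm GC}_H$ and $C_H$ simultaneously; the degree bookkeeping with the twist by $\ell$ that you sketch at the end then goes through, but only after the degrees have been reflected by that duality.
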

\begin{proof}
This is an immediate consequence of the description of the Betti numbers of $R/M_H$ and $R/I_H$ from \cite{ManSchWil15}, and from the graded version of \cite[Corollary 3.3.9]{BruHer98} i.e., the relation between the Betti numbers of a graded Cohen-Macaulay module and its canonical module. 
\end{proof}

{\bf Proof of Proposition \ref{delete-contra_prop}:}
First, note that by expressing the Hilbert series of a graded module in terms of its Betti numbers, we obtain 
\begin{center}
$(\sum_{i,j}(-1)^i\beta_{i,j}(G)t^j)/(1-t)^n=K_G(t)/(1-t)$,\\
$(\sum_{i,j}(-1)^i\beta_{i,j}(G \setminus e)t^j)/(1-t)^n=K_{G \setminus e}(t)/(1-t)$ and 
$(\sum_{i,j}(-1)^i\beta_{i,j}(G/e)t^j)/(1-t)^{n-1}=K_{G/e}(t)/(1-t)$.
\end{center}

Applying Merino's theorem and comparing the coefficients of powers of $t$ yields the deletion-contraction formula for the alternating numbers. \qed

\subsection{The Tutte Long Exact Sequence of Tor}

{\bf Proof of Theorem \ref{vanimpeq_theo}:} 
Consider the long exact sequence in Tor associated with the short exact sequence $0 \rightarrow {\rm CPark}_{G/e}(-1) \xrightarrow{\cdot x_{1,2}} {\rm CPark}_{G/e} \rightarrow {\rm CPark}_{G/e}/(x_{1,2} \cdot {\rm CPark}_{G/e}) \rightarrow 0$ and restrict to the $j$-th degree. Note that $\beta_{r,s-1}(G/e)$ is the $(r,s)$-th graded Betti number of ${\rm CPark}_{G/e}(-1)$. Hence, if $\beta_{i,j}(G/e)=\beta_{i-1,j-1}(G/e)=0$, then the $(i,j)$-th Betti number of ${\rm CPark}_{G/e}/(x_{1,2} \cdot {\rm CPark}_{G/e})$ is zero. Similarly,  if $\beta_{i-1,j}(G/e)=\beta_{i-2,j-1}(G/e)=0$, then the $(i-1,j)$-th Betti number of ${\rm CPark}_{G/e}/(x_{1,2} \cdot {\rm CPark}_{G/e})$ is zero. 

Next, consider the long exact sequence in Tor associated with the $G$-parking Tutte short exact sequence $0 \rightarrow {\rm CPark}_{G/e}/(x_{1,2} \cdot {\rm CPark}_{G/e}) \xrightarrow{\psi_0} {\rm CPark}_{G} \otimes_R R_e \xrightarrow{\phi_0}  {\rm CPark}_{G \setminus e} \otimes_R R_e  \rightarrow 0$ and restrict to the $j$-th degree. Note that if the $(i,j)$-th Betti number and $(i-1,j)$-th Betti number of ${\rm CPark}_{G/e}/(x_{1,2} \cdot {\rm CPark}_{G/e})$ are zero, then the map between ${\rm Tor}^{i}_j( {\rm CPark}_{G} \otimes_R R_e,\mathbb{K})$ and ${\rm Tor}^{i}_j( {\rm CPark}_{G \setminus e} \otimes_R R_e,\mathbb{K})$, (where ${\rm Tor}^i_j(.,.)$ is the $j$-th graded piece of the $i$-th Tor module) is an isomorphism. Taking dimensions on both sides, completes the proof of Theorem \ref{vanimpeq_theo}. \qed

A couple of remarks are in place.  The deletion-contraction formula for alternating numbers can also be proved via the two long exact sequences in the proof of Theorem \ref{vanimpeq_theo}: by taking their Euler characteristic and comparing them. 
We do not know the graded Betti numbers of the quotient ${\rm CPark}_{G/e}/(x_{1,2} \cdot {\rm CPark}_{G/e})$ (as an $R_e$-module).  \qed

\footnotesize
\noindent {\bf Author's address:}

\smallskip

\noindent Department of Mathematics,\\
Indian Institute of Technology Bombay,\\
Powai, Mumbai,
India 400076.\\
Email: madhu@math.iitb.ac.in, madhusudan73@gmail.com


\begin{thebibliography}{0}
{

\bibitem{Bac17} Spencer Backman,  \textsl{Riemann-Roch Theory for Graph Orientations}, Advances in Mathematics {\bf 309}, 655--691, 2017.

\bibitem{BakNor07} Matthew Baker and Serguei Norine, \textsl{Riemann-Roch and Abel-Jacobi Theory on a Finite Graph}, Advances in Mathematics {\bf 215(2)}, 766--788, 2007.




\bibitem{BruHer98} Winfried Bruns and J\"urgen Herzog, \textsl{Cohen-Macaulay Rings}, Cambridge University Press, 1998. 

\bibitem{CorRosSal00} Robert Cori, Dominique Rossin and Bruno Salvy,  \textsl{Polynomial Ideals for Sandpiles and their Grobner Bases}, Theoretical Computer Science, {\bf 276(1-2)}, 1--15, 2000.

\bibitem{DocSan14}   Anton Dochtermann and Raman Sanyal, Laplacian Ideals, Arrangements, and Resolutions, Journal of Algebraic Combinatorics  {\bf 40(3)}, 805--822, 2014. 


\bibitem{Hel-Ron05} Laure Helme-Guizon and Yongwu Rong, \textsl{A Categorification for the Chromatic Polynomial}, Algebraic Geometric Topology {\bf 5(4)}, 1365-1388, 2005.  


\bibitem{Jas-Ron06} Edna F Jasso-Hernandez and Yongwu Rong, \textsl{A Categorification for the Tutte Polynomial}, Algebraic Geometric Topology {\bf 6(5)}, 2031--2049, 2006.


\bibitem{ManSchWil15} Madhusudan Manjunath, Frank-Olaf Schreyer and John Wilmes, \textsl{Minimal Free Resolutions of the $G$-parking Function Ideal and the Toppling Ideal}, Transactions of the American Mathematical Society  {\bf 367(4)}, 2853--2874, 2015.

\bibitem{ManStu13}  Madhusudan Manjunath and Bernd Sturmfels, \textsl{Monomials, Binomials and Riemann-Roch}, Journal of Algebraic Combinatorics {\bf 37(4)}, 737--756, 2013. 

\bibitem{Mer97}  Criel Merino, \textsl{Chip firing and the Tutte polynomial}, Annals of Combinatorics {\bf 1}, 253--259, 1997. 

\bibitem{Mer01} Criel Merino, \textsl{The Chip Firing Game and Matroid Complexes}, Discrete Mathematics and Theoretical Computer Science Proceedings AA(DM-CCG), 245--256, 2001. 

\bibitem{MilStu05} Ezra Miller and Bernd Sturmfels, \textsl{Combinatorial Commutative Algebra}, Springer, Graduate Texts in Mathematics, 2005. 

\bibitem{MohSho16}  Fatemeh Mohammadi and Farbod Shokrieh, \textsl{Divisors on Graphs, Binomial and Monomial Ideals, and Cellular Resolutions}, Mathematische Zeitschrift {\bf 283(1--2)}, 59--102, 2016.


\bibitem{Mos72} K.M. Mosesian, \textsl{Strongly Basable Graphs}, Akad. Nauk. Armian. SSR Dokl {\bf 54}, 134--138, 1972.

\bibitem{OrlTer92} Peter Orlik and Hiroaki Terao, \textsl{Arrangements of Hyperplanes}, Springer-Verlag, 1992. 

\bibitem{PerPerWil11} David Perkinson, Jacob Perlman and John Wilmes, \textsl{Primer for the Algebraic Geometry of Sandpiles}, Tropical and Non-archimendean Geometry: Proceedings of the Belliars Workshop in Number Theory, 2011. 

\bibitem{PosSha04} Alexander Postnikov and Boris Shapiro,  \textsl{Trees, Parking Functions, Syzygies, and Deformations of Monomial Ideals},  Transactions of the American Mathematical Society {\bf 356(8)}, 3109--3142, 2004.  


}\end{thebibliography}
\end{document}